\theoremstyle{plain}
\newtheorem{pretheo}{Theorem}[section]
\newtheorem{preassu}[pretheo]{Assumption}
\newtheorem{precoro}[pretheo]{Corollary}
\newtheorem{predefi}[pretheo]{Definition}
\newtheorem{preexam}[pretheo]{Example}
\newtheorem{prelemm}[pretheo]{Lemma}
\newtheorem{preprop}[pretheo]{Proposition}
\newtheorem{prerema}[pretheo]{Remark}
\newenvironment{theo}{\begin{pretheo}}{\end{pretheo}}
\newenvironment{defi}{\begin{predefi}}{\end{predefi}}
\newenvironment{lemm}{\begin{prelemm}}{\end{prelemm}}
\newenvironment{prop}{\begin{preprop}}{\end{preprop}}
\newenvironment{rema}{\begin{prerema}\rm}{\end{prerema}}
\DeclareMathOperator{\di}{div}
\DeclareMathOperator{\Di}{Div}
\newcommand{\jump}[1]{\ensuremath{[\![#1]\!]}}
\newcommand{\pa}{\partial}
\newcommand{\wh}[1]{\widehat{#1}}
\newcommand{\wt}[1]{\widetilde{#1}}
\newcommand{\BBA}{\mathbb{A}}
\newcommand{\BBB}{\mathbb{B}}
\newcommand{\BBC}{\mathbb{C}}
\newcommand{\BBD}{\mathbb{D}}
\newcommand{\BBH}{\mathbb{H}}
\newcommand{\BBI}{\mathbb{I}}
\newcommand{\BBN}{\mathbb{N}}
\newcommand{\BBR}{\mathbb{R}}
\newcommand{\BBT}{\mathbb{T}}
\newcommand{\0}{\boldsymbol {0}}
\newcommand{\Ba}{\boldsymbol {a}}
\newcommand{\Bb}{\boldsymbol {b}}
\newcommand{\Bg}{\boldsymbol {g}}
\newcommand{\Bf}{\boldsymbol {f}}
\newcommand{\Bh}{\boldsymbol {h}}
\newcommand{\Bk}{\boldsymbol {k}}
\newcommand{\Bn}{\boldsymbol {n}}
\newcommand{\Bp}{\boldsymbol {p}}
\newcommand{\Bu}{\boldsymbol {u}}
\newcommand{\Bv}{\boldsymbol {v}}
\newcommand{\Bw}{\boldsymbol {w}}
\newcommand{\Bx}{\boldsymbol {x}}
\newcommand{\BR}{\boldsymbol {R}}
\newcommand{\BU}{\boldsymbol {U}}
\newcommand{\BW}{\boldsymbol {W}}
\newcommand{\BX}{\boldsymbol {X}}
\newcommand{\BY}{\boldsymbol {Y}}
\newcommand{\CA}{\mathcal{A}}
\newcommand{\CC}{\mathcal{C}}
\newcommand{\CD}{\mathcal{D}}
\newcommand{\CG}{\mathcal{G}}
\newcommand{\CH}{\mathcal{H}}
\newcommand{\CI}{\mathcal{I}}
\newcommand{\CK}{\mathcal{K}}
\newcommand{\CL}{\mathcal{L}}
\newcommand{\CR}{\mathcal{R}}
\newcommand{\CS}{\mathcal{S}}
\newcommand{\CT}{\mathcal{T}}
\newcommand{\CY}{\mathcal{Y}}
\newcommand{\CZ}{\mathcal{Z}}
\newcommand{\Fh}{\mathfrak{h}}
\newcommand{\Fp}{\mathfrak{p}}
\newcommand{\Fq}{\mathfrak{q}}
\newcommand{\FA}{\mathfrak{A}}
\newcommand{\FL}{\mathfrak{L}}
\newcommand{\FP}{\mathfrak{P}}
\newcommand{\ga}{\gamma}
\newcommand{\ep}{\varepsilon}
\newcommand{\Ga}{\Gamma}
\newcommand{\Om}{\Omega}
\newcommand{\dOm}{\dot \Omega}
\newcommand{\BuL}{\boldsymbol u_{_L} }
\newcommand{\tCA}{\widetilde{\mathcal A}}
\newcommand{\tE}{\widetilde{E}}
\newcommand{\tJ}{\widetilde{J}}
\newcommand{\tBf}{\widetilde{\boldsymbol f}}
\newcommand{\tBh}{\widetilde{\boldsymbol h}}
\newcommand{\tBk}{\widetilde{\boldsymbol k}}
\newcommand{\tg}{\widetilde{g}}
\newcommand{\tBu}{\widetilde{\boldsymbol u}}
\newcommand{\tBR}{\widetilde{\boldsymbol R}}
\newcommand{\tBuL}{\widetilde{\boldsymbol u}_{_L}}
\newcommand{\tBU}{\widetilde{\boldsymbol U}}
\newcommand{\tBW}{\widetilde{\boldsymbol W}}
\newcommand{\tW}{\widetilde{ W}}
\newcommand{\tBBD}{\widetilde{\mathbb{D}}}
\newcommand{\tBBH}{\widetilde{\mathbb{H}}}
\newcommand{\tBBK}{\widetilde{\mathbb{K}}}
\newcommand{\tp}{\widetilde{p}}
\newcommand{\tq}{\widetilde{q}}
\newcommand{\tQ}{\widetilde{Q}}
\newcommand{\tsigma}{\widetilde{\sigma}}
\newcommand{\oBf}{\overline{\boldsymbol f}}
\newcommand{\oBh}{\overline{\boldsymbol h}}
\newcommand{\oBk}{\overline{\boldsymbol k}}
\newcommand{\oBn}{\overline{\boldsymbol n}}
\newcommand{\oFn}{\overline{\mathfrak n}}
\newcommand{\og}{\overline{g}}
\newcommand{\oBR}{\overline{\boldsymbol R}}
\newcommand{\oBuL}{\overline{\boldsymbol u}_{_L}}
\newcommand{\Bnu}{\boldsymbol{\nu}}
\newcommand{\Balpha}{\boldsymbol {\alpha}}
\newcommand{\sA}{\mathscr{A}}
\newcommand{\sB}{\mathscr{B}}
\newcommand{\sE}{\mathscr{E}}
\newcommand{\ubar}[1]{\underaccent{\bar}{#1}}
\newcommand{\JDt}{\langle D_t \rangle}
\newcommand{\BUC}{\mathcal{BUC}}
\newcommand{\FpL}{\mathfrak{p}_{_L}}
\newcommand{\FqL}{\mathfrak{q}_{_L}}
\newcommand{\ET}{E_{_{(T)}}}
\def\d{\partial}
\numberwithin{equation}{section} 
\begin{document}
\title[Two-phase problem with free surface]{Some free boundary problem for two phase inhomogeneous incompressible flow}

\author{Hirokazu Saito}
\address[H.~Saito]{Faculty of Industrial Science and Technology, Liberal Arts\\
Tokyo University of Science\\
102-1 Tomino, Oshamambe-cho, Yamakoshi-gun, Hokkaido, 049-3514, Japan}
\email{hsaito@rs.tus.ac.jp}

\author{Yoshihiro Shibata}
\address[Y.~Shibata]{Waseda Research Institute for Science and Engineering\\ 
             Waseda University\\
 3-4-1 Ohkubo, Shinjuku-ku, Tokyo, 169-8555, Japan}
\address{Department of Mathematics\\
             Faculty of Science and Engineering\\ 
             Waseda University,Japan}
\address{Adjunct faculty member\\ 
 Department of Mechanical Engineering and Matherials Science, University of Pittsburgh, USA}   
\email{yshibata@waseda.jp}    
  
\author{Xin Zhang}
\address[X.~Zhang]{Waseda Research Institute for 
Science and Engineering\\ Waseda University\\
 3-4-1 Ohkubo, Shinjuku-ku, Tokyo, 169-8555, Japan}      
\email{xinzhang@aoni.waseda.jp}

\subjclass[2010]{Primary: 35Q30; Secondary: 76D05.}

\keywords{Two-phase problem, Inhomogeneous incompressible Navier-Stokes equations, $L_p-L_q$ maximal regularity, analytic semigroup}




\begin{abstract}
In this paper, we establish some local and global solutions for the two phase 
 incompressible inhomogeneous flows with moving interfaces in $L_p-L_q$ maximal regularity class. 
Compared with previous results obtained by V.A.Solonnikov and by Y.Shibata \& S.Shimizu, we find the local solutions in $L_p-L_q$ class in some general uniform $W^{2-1\slash r}_r$ domain in $\BBR^N$ by assuming  $(p, q)\in ]2,\infty[\times ]N,\infty[$ or  $(p, q)\in ]1,2[ \times ]N,\infty[$ satisfying $1 \slash p + N \slash q>3\slash 2.$ 
In particular, less regular initial data are allowed by assuming $p<2.$
In addition, if the density and the viscosity coefficient are piecewise constant, we can construct the long time solution from the small initial states in the case of the bounded droplet. This is due to some decay property for the corresponding linearized problem.
\end{abstract}

\maketitle

\renewcommand{\thefootnote}{\fnsymbol{footnote}}

\tableofcontents

\clearpage

\section{Introduction}\label{sec:intro}

\subsection{Description of the problem}
In this paper, we consider the following Cauchy problem in $\BBR^N$ ($N\geq 2$), 
\begin{equation}\label{eq:INS}\tag{$INS_{\pm}$}
	\left\{\begin{aligned}
		\pa_t(\rho \Bv)+\Di (\rho \Bv \otimes \Bv)  - \Di \BBT(\Bv,\Fp) = \rho \Bf  &\quad\mbox{in}\quad \dOm_t,   \\
		\pa_t \rho + \di (\rho \Bv) =0,  \,\,\,\di\Bv = 0    &\quad\mbox{in}\quad \dOm_t,  \\
		\jump{\BBT(\Bv,\Fp)\Bn_t} = \jump{\Bv}=\0,\,\,\, V_t=\Bv\cdot \Bn_t &\quad\mbox{on}\quad \Ga_t,  \\
		\BBT(\Bv_+,\Fp_+)\Bn_{+,t} = \0, \,\,\, V_{+,t}= \Bv_+\cdot \Bn_{+,t}&\quad\mbox{on}\quad \Ga_{+,t}, \\
				\Bv_- = \0 &\quad\mbox{on}\quad \Ga_{-}, \\
		(\rho, \Bv)|_{t=0}=(\rho_0, \Bv_0) &\quad\mbox{in}\quad \dOm,
	\end{aligned}\right.
\end{equation}
which describes the motion of two immiscible viscous incompressible liquids at time instant $t$ in some domain 
$\Omega_t:=\dOm_t \,\cup\, \Gamma_t := \Omega_{+,t} \,\cup\, \Omega_{-,t} 
\,\cup\,\Gamma_t$ 
surrounded by free surface $\Gamma_{+,t}$ and fixed boundary $\Gamma_-$ without taking the surface tension into account.
For simplicity, we adopt the notations 
$\Omega := \dOm \cup \Gamma
:= \Omega_+\cup \Omega_- \cup \Gamma$ for the initial domain with boundaries $\Gamma_\pm$ by dropping off the subscript $t=0.$

In fact, there are (at least) three typical physical situations as follows characterized by the model \eqref{eq:INS} (also see the figure below),
 but we will mainly concentrate on $(\Omega_1)$ for the long time issue later.
\begin{enumerate}
\item[$(\Omega_1)$:] $\Omega_t$ is some bounded droplet surrounded by the free surface $\Gamma_{+,t}$ with setting $\Gamma_- =\emptyset;$
\item[$(\Omega_2)$:] $\Omega_t$ is some bounded container with solid  boundary $\Gamma_-$ by assuming $\Gamma_{+,t} = \emptyset;$
\item[$(\Omega_3)$:] $\dOm_t = \Omega_{+,t} \,\cup\, \Omega_{-,t}$ stand for two infinite layers with some rigid bottom $\Gamma_-.$
\end{enumerate}
Besides, $\Bn_t$ and $\Bn_{+,t}$ are outwards unit normals subject to the moving interface $\Gamma_t$ between two bulks $\Omega_{\pm,t}$ and the free surface $\Gamma_{+,t}$ respectively.

\includegraphics[height=5cm]{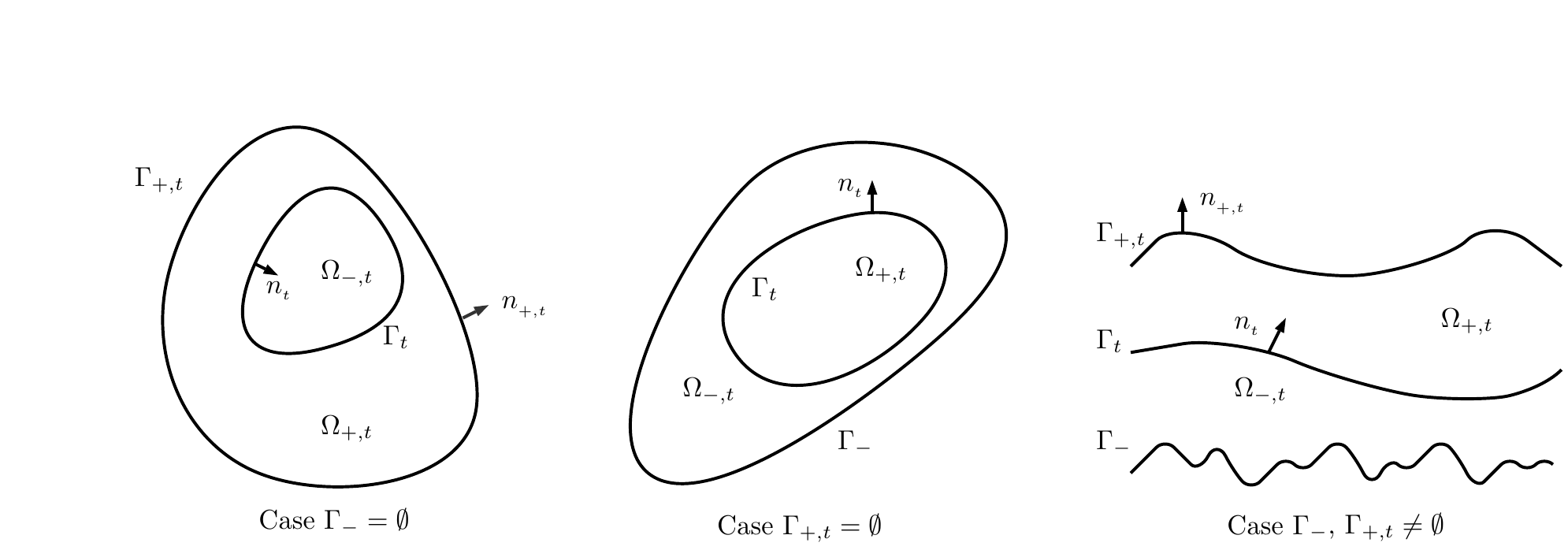}

With above settings on domains $\dOm_t,$ our aim is to determine the unknowns $(\rho,\Bv, \Fp,\dOm_t)$ in \eqref{eq:INS}: the density, the velocity field, the pressure and the domain with free interface, whenever the external force $\Bf$ and initial states $(\rho_0, \Bv_0)$ are given. In addition, the standard stress tensor $\BBT(\Bv,\Fq)$ is defined by
\begin{equation*}
\BBT(\Bv,\Fq) (x,t) : = \mu \big(\rho(x,t)\big) \BBD (\Bv) (x,t) -\Fq (x,t)\BBI,
\end{equation*} 
and the double deformation tensor $\BBD(\Bv)$ is given by
\begin{equation*}
\BBD(\Bv) := \nabla_{x}^{\top} \Bv + \nabla_x \Bv^{\top} \quad  \hbox{and} \quad  
       \big(\nabla^{\top}_{x} \Bv \big)^{j}_{k} = \big(\nabla_x \Bv^{\top} \big)^{k}_{j} := \pa_{x_k} v^{j}
        \quad \hbox{for}\quad j,k=1,...,N.
\end{equation*}
In \eqref{eq:INS}, the following standard notations are also utilized.
For any two vectors $\Bu, \Bv$ in $\BBR^N,$ the tensor product $\Bu \otimes \Bv$
stands for a $N\times N$ matrix with the $(j,k)$-entry 
$(\Bu \otimes \Bv)^j_k := u^j v^k$ ($1\leq j,k \leq N$). 
Additionally, for any $N\times N$ matrix $\BBA=\big(A^j_k(x)\big)_{N\times N},$ the quantity $\Di \BBA$ denotes an 
$N-$vector with $j^{\text{\tiny th}}$ component $(\Di \BBA)^j:=\sum_{k=1}^N \pa_{x_k} A^j_k.$ 
Lastly, the jump of the vector $\Bg$ 
across some surface $\CS$ is given by the following non-tangential limit
\begin{equation*}
\jump{\Bg}(x_0) := \lim_{\delta \rightarrow 0+} 
\Big(\Bg\big(x_0+\delta \Bnu (x_0)\big)
-\Bg\big(x_0-\delta \Bnu (x_0)\big)\Big)
\quad \forall\,\,\,x_0 \in \CS,
\end{equation*} 
where $\Bnu$ is the unit outward normal along the surface $\CS.$
Moreover, $V_t$ and $V_{+,t}$ stand for the normal velocities of $\Gamma_t$ and $\Gamma_{+,t}$ respectively.
\medskip

Although the goal in this paper is to investigate the solution of the two-phase model \eqref{eq:INS} within $L_p-L_q$ maximal regularity, 
our method below can be applied to the classical one-phase Navier-Stokes problem.
For convenience, we recall $\Omega$ within either of the following physical settings here:
\begin{enumerate}
\item[$(\Omega_4)$:] $\Omega$ is some moving (bounded) droplet without the solid boundary, i.e. $\Gamma_- = \emptyset;$
\item[$(\Omega_5)$:] $\Omega$ is some infinite layer with some finite-depth bottom $\Gamma_-.$
\end{enumerate}

Now let us review the history of the free boundary problem on the motion of viscous liquid for the cases $(\Omega_2)$- $(\Omega_5)$ before describing the main results in this context.  
The first breakthrough is due to \cite{Sol1977} by V.A. Solonnikov for the case $(\Omega_4)$, where the author first came up with Lagrangian coordinates approach and studied the classical solution of H\"older continuity. Indeed, V.A. Solonnikov in \cite{Sol1977} succeeded in establishing the short time existence of a unique solution in some bounded domain $\Omega_t$ with free surface $\Gamma_{+,t},$ as long as the given data satisfy
\begin{equation*}
\Bv_0 \in \CC^{2,\ep}(\Omega), \,\,\
\Gamma_+ \,\,\hbox{is of class}\,\, \CC^{2,\ep}
 \,\,\, \hbox{and}\,\,\,
\Bf, \nabla\Bf \in (\CC^{\ep\slash 2}_t \cap \CC^{\ep}_x )(\BBR^3\times]0,T[)
\end{equation*}  
for some $0<\ep <1$ and some $T >0.$ 
Later, Solonnikov in \cite{Sol1987} investigated the global solvability in the Sobolev space framework where he assumed that $\Gamma_+$ is $W^{2-1\slash r}_r$ regular for some $r>N.$
Compared with \cite{Sol1977,Sol1987}, V.A. Solonnikov studied the role of  
 of the surface tension in \cite{Sol1986,Sol1989}. 
\smallbreak

For the unbounded layer $(\Omega_5)$, J.T.Beale studied the (local and global) wellposedness issues within the $L^2$ framework in \cite{Bea1981} without taking surface tension into account, and in \cite{Bea1983} with surface tension involved.  Roughly speaking, the author in \cite{Bea1981,Bea1983} established the solutions from the initial state $\Bv_0$ in $W^s_2(\Omega)$ for some $s \in ]2,5\slash 2[.$ Furthermore, A.Tani in \cite{Tani1996} and A.Tani and N.Tanaka in \cite{TaniT1995} formulated the problem for $(\Omega_5)$ in fractional Sobolev-Slobodetski\v{\i} spaces. More recently, Y.Guo and I.Tice gave a series of works \cite{GuoT2013a,GuoT2013b,GuoT2013c} about the wellposedness issues and decay property of $(\Omega_5)$ based on the new energy method.
\smallbreak

Besides, for the bounded domain $(\Omega_2)$ occupied by two-phase inhomogeneous immiscible liquids, N.Tanaka in \cite{Tana1993,Tana1995} proved that the equilibrium state is stable in $L_2$ framework with including surface tension. Inspired by \cite{Tana1993,Tana1995},  L.Xu and Z.Zhang in \cite{XuZ2010} studied the double-layer case $(\Omega_3)$ with gravity additionally involved. 
However, for the piecewise constant density, the works \cite{Deni2007,DeniSol2011} by I.V.Denisova et al. implied the global solvability in H\"older space with or without surface for the domain $(\Omega_2).$
\smallbreak

Apart from the $L_2$ or H\"older framework, we also note that some recent contributions \cite{KPW2013,PruSim2010a,PruSim2009,PruSim2010b,PruSim2016} to the $L_p$ approach for two-phase problems by J. Pr\"uss and his collaborators, especially for case of the surface tension. For instance, the authors in \cite{PruSim2010a} showed that the interface between two immiscible liquids becomes instantaneously real analytic whenever the initial data lie in some Sobolev spaces $W^s_p$ for large enough $p.$ More recently, Y.Shibata in \cite{Shi2018b, Shi2018c} and H.Saito in \cite{Sai2018} studied one-phase problem including $(\Omega_4)$ and $(\Omega_5)$ in the $L_p-L_q$ framework with $2<p<\infty,$ $N<q<\infty$ and $2\slash p + N\slash q <1.$ 
\medskip

The solvability for the two-phase problem within $L_p-L_q$ maximal regularity is our main task, which will rely on the recent contributions \cite{ShiShi2011, MarSai2017} to the linearized model problem. In \cite{ShiShi2011, MarSai2017}, the authors employed the Multiplier Theorem  characterized by \emph{$\CR$-boundedness theory}  in \cite{Weis2001} to handle the resolvent problems. For $\CR$-boundedness theory, one may see \cite{DHP2003,KPW2013} for more discussions.

\subsection{Main results}
To state our main results in this context concerning \eqref{eq:INSL},
we firstly specify the assumptions on $\dot\Omega.$ 
\begin{defi}
We say that a connected open subset $\Omega$ in $\BBR^N$ ($N \geq 2$) is of class $W^{2-1\slash r}_r$ for some $1<r<\infty,$ if and only if for any point $x_0 \in \pa \Omega,$ one can choose a Cartesian coordinate system with origin $x_0$ (up to some translation and rotation) and coordinates $y=(y',y_{_N}):=(y_1,...,y_{_{N-1}},y_{_N}),$ as well as positive constants $\alpha, \beta, K$ and some $W^{2-1\slash r}_r$ function $h$ satisfying $\|h\|_{W^{2-1\slash r}_r} \leq K$ such that the neighborhood of $x_0$
\begin{equation*}
U_{\alpha,\beta, h}(x_0):= \{(y',y_{_N}): h(y')-\beta <y_{_N} < h(y')+\beta, |y'|<\alpha \}
\end{equation*}  
satisfies
\begin{equation*}
U^{-}_{\alpha,\beta, h}(x_0):= \{(y',y_{_N}): h(y')-\beta <y_{_N} < h(y'), |y'|<\alpha \}=\Omega \cap U_{\alpha,\beta, h}(x_0),
\end{equation*}
and $$\pa\Omega \cap U_{\alpha,\beta, h}(x_0)= \{(y',y_{_N}): y_{_N} = h(y'), |y'|<\alpha \}.$$
Above $\alpha, \beta, K, h$ may vary with respect to the different location on the boundary. 
Whenever the choices of $\alpha, \beta, K$ are independent of the position of $x_0,$ $\Omega$ is called uniform $W^{2-1\slash r}_r$ domain. Note that if the boundary $\pa \Omega$ is compact, then the uniformness is satisfied automatically.  Sometimes $\Omega$ is just called $W^{2-1\slash r}_r$ regular for simplicity. 
\end{defi}
\smallbreak

In general,  assume that $\Omega := \dOm \cup \Gamma
:= \Omega_\pm \cup \Gamma$ in $\BBR^N$ surrounded by two sharp surfaces $\Gamma_\pm$ are of uniform $W^{2-1\slash r}_r$ class hereafter.
For such domain, recall the unique solvability of the so-called \emph{weak elliptic transmission problem}, which plays a fundamental role in our results. To be exact, let us introduce some functional spaces and notations.
Firstly, $W^1_q(\Omega)$ and $ \widehat{W}^1_{q,\Gamma_+}(\Omega)$ are standard (nonhomogeneous and homogeneous) Sobolev spaces. Namely,
\begin{align*}
W^1_q(\Omega) &:= \{f\in L_q(\Omega): \|f\|_{W^1_q(\Omega)}
:=\|f\|_{L_q(\Omega)} + \|\nabla f\|_{L_q(\Omega)} <\infty \},\\
\widehat{W}^1_q(\Omega) &:= \{f\in L_{q,loc}(\Omega): 
\|f\|_{\widehat{W}^1_q(\Omega)}:=\|\nabla f\|_{L_q(\Omega)} <\infty \}.
\end{align*}
Next, the linear space $X^{1}_{q,\Gamma_+} (\Omega)$ for any $1<q<\infty$ is defined as below,
\begin{equation*}
X^{1}_{q,\Gamma_+} (\Omega):= 
\begin{cases}
\{f \in X^1_q(\Omega): f= 0\,\,\, \hbox{on}\,\,\, \Gamma_+ \}& \hbox{if}\,\,\, \Gamma_+ \not= \emptyset, \\
X^1_q(\Omega) & \hbox{if}\,\,\, \Gamma_+ = \emptyset,
\end{cases}
\end{equation*}
with the word $X \in \big\{W, \widehat{W}\big\}$ and $\|f\|_{X^{1}_{q,\Gamma_+} (\Omega)} := \|f\|_{X^{1}_{q} (\Omega)}.$ 
Moreover, for any vectors $\Bu$ and $\Bv$ defined in any domain
$G \subset \BBR^N,$ set $(\Bu,\Bv)_{G}:=\int_{G} \Bu\cdot \Bv \,dx
 = \sum_{j=1}^N\int_{G} u^j v^j \,dx.$
\begin{defi}\label{def:welliptic}
Consider some domain 
$\Omega$ as above with $\Gamma_+ \not= \emptyset$ and suppose that the step function 
$\eta:= \eta_+ \mathds{1}_{\Omega_+} +\eta_-\mathds{1}_{\Omega_-} $ for any constants $\eta_\pm>0.$
Then we say that the weak elliptic transmission problem is uniquely solvable on $\widehat{W}^1_{q,\Gamma_+}(\Omega)\,(1<q<\infty)$ for $\eta$ if the following assertions hold true:
For any $\Bf \in L_q(\Omega)^N,$ there is a unique $\theta \in \widehat{W}^1_{q,\Gamma_+}(\Omega)$ satisfying variational equations 
 as below,
\begin{equation*}
(\eta^{-1} \nabla \theta, \nabla \varphi)_{\dOm} = (\Bf,\nabla \varphi)_{\Omega}, 
\quad \hbox{for all} \,\,\, \varphi \in\widehat{W}^1_{q',\Gamma_+}(\Omega).
\end{equation*}
Moreover, there exists a constant $C$ independent on the choices of $\theta$, $\varphi$ and $\Bf$ such that 
$$\|\nabla \theta\|_{L_q(\Omega)} \leq C\|\Bf\|_{L_q(\Omega)}.$$
\end{defi}

Now let us give some comments on Definition \ref{def:welliptic}, which will shed light on the construction of Stokes operator for two phase problem later.
\begin{rema}\label{rmk:welliptic}
For some $1<q<\infty,$ we write 
\begin{equation*}
W^1_q(\dOm)+\widehat{W}^1_{q,\Gamma_+}(\Omega):=\{\theta_1+\theta_2:\theta_1 \in W^1_q(\dOm) 
\,\,\,\hbox{and}\,\,\,
\theta_2 \in \widehat{W}^1_{q,\Gamma_+}(\Omega)\}.
\end{equation*}
Suppose that the week elliptic transmission problem is uniquely solvable on $\widehat{W}^1_{q,\Gamma_+}(\Omega)$ for $\eta_\pm$ and $\Omega$ as in Definition \ref{def:welliptic}. Then for any 
$(\Balpha,\beta,\gamma) \in L_q(\dOm)^N 
\times W^{1-1\slash q}_q (\Gamma) \times W^{1-1\slash q}_q (\Gamma_+)$ 
and any test function $\varphi \in \widehat{W}^1_{q',\Gamma_+}(\Omega),$ there exists a unique $\theta \in W^1_q(\dOm)+ \widehat{W}^1_{q,\Gamma_+}(\Omega)$ satisfying
\begin{equation*}
(\eta^{-1}\nabla \theta, \nabla \varphi)_{\dOm} = (\Balpha, \nabla \varphi)_{\dOm},\quad \jump{\theta} =\beta \,\,\,\hbox{on}\,\,\,\Gamma \quad\hbox{and}\quad 
\theta = \gamma \,\,\,\hbox{on}\,\,\,\Gamma_+.
\end{equation*} 
In addition, there is positive constant $C$ independent on the choices of $\Balpha,$ $\beta,$ $\gamma$ and $\varphi$ such that,
\begin{equation*}
\|\nabla \theta\|_{L_q(\dOm)} \leq C \Big(\|\Balpha\|_{L_q(\dOm)} 
+ \|\beta\|_{W^{1-\frac{1}{q}}_q (\Gamma)}
+ \|\gamma\|_{W^{1-\frac{1}{q}}_q (\Gamma_+)}\Big).
\end{equation*}
For brevity, we write $\theta := \CK(\Balpha,\beta,\gamma),$ which satisfies above properties. 
\end{rema}
\smallbreak

By above discussions, let us summarize our hypotheses to investigate \eqref{eq:INS} as follows.
\begin{enumerate}
\item[$(\CH 1)$] The domain $\dOm$ is uniformly  $W^{2-1\slash r}_r$ regular for some $r>N$, i.e. $\Om_\pm$ are uniform $W^{2-1\slash r}_r$ domains; 
\item[$(\CH 2)$] The weak Elliptic transmission problem is uniquely solvable on $\widehat{W}^1_{q,\Gamma_+} (\Om)$ and $\widehat{W}^1_{q',\Gamma_+}(\Om)$ for $\eta_\pm>0$ and $q \in ]1,\infty[.$
\item[$(\CH 3)$] $\mu\big(\rho_0(x)\big)$ is a strictly positive function on $\dOm$ satisfying 
\begin{equation*}
\ubar{\mu}_{+} \mathds{1}_{\Omega_+}+\ubar{\mu}_{-}\mathds{1}_{\Omega_-}
  \leq \mu\big(\rho_0(\cdot)\big) \leq
  \bar{\mu}_{+} \mathds{1}_{\Omega_+}+  \bar{\mu}_{-}  \mathds{1}_{\Omega_-},
\end{equation*}
where the constants $ \ubar{\mu}_{\pm},$ $ \bar{\mu}_{\pm}>0.$ In addition, assume that $\mu \in C^{1}(\BBR_+;\BBR_+)$ and $r$ is given as in $(\CH 1).$
\end{enumerate}
\medskip

By assuming $(\CH 1)- (\CH 3),$ let us outline the main strategy to handle \eqref{eq:INS}.
Motivated by the pioneering work \cite{Sol1977} by V.A.Solonnikov, we shall take advantage of the so-called \emph{Lagrangian coordinates}.  Moreover, the fact that the surfaces $\Gamma_t,$ $\Gamma_{+,t}$ and $\Gamma_-$ consist of the exactly same fluid particles at all time instants $t,$ is taken for granted. 
Indeed, if we denote $\Bu(\xi, t) := \Bv\big( \BX_{u}(\xi, t) ,t\big)$ and consider
\begin{equation}\label{eq:Lagrange}
\BX_{u}(\xi ,t)  := \xi +\int^t_0 \Bu (\xi , \tau) \,d \tau 
\quad \hbox{for all}\,\,\, \xi  \in \Om \cup \Gamma_-, 
\end{equation}
then $(\Gamma_t,\Gamma_{+,t},\Gamma_-) = \BX_u\big((\Gamma_t,\Gamma_{+},\Gamma_-),t\big).$
In other words, the unknown regions $\Omega_{\pm,t}$ are the image of $\Omega_\pm$ respectively under the transformation $\BX_u(\cdot,t).$ 
Then, to rewrite \eqref{eq:INS} by \eqref{eq:Lagrange}, we adopt the following notations here and subsequently.
\begin{itemize}
\item For any $\CC^1$ vector $\BY(\xi)$ defined in $\dOm,$  write $\nabla_{\xi}^{\top}\BY$ for the Jacobian matrix of $\BY,$ i.e. $(\nabla_{\xi}^{\top} \BY)^j_k := \pa_{\xi_k} Y^j$ for $1\leq j,k\leq N,$ and
$\nabla_\xi \BY^{\top}:=(\nabla_{\xi}^{\top} \BY)^{\top}.$
\item  For simplicity, $\sA_u$ stands for the cofactor matrix of $\nabla_{\xi}^{\top} \BX_u.$
Moreover, the derivatives and stress tensors related to \eqref{eq:Lagrange} are given by
\begin{equation*}
\nabla_{u } := \sA_u \nabla_{\xi }, \,\,\, 
\di_{u} = \Di_{u}:= \nabla_{u }\cdot
\,\,\,\hbox{and}\,\,\,
 \BBT_{u }(\Bw , \Fq) := \mu \big(\rho_0 (\xi)\big) \BBD_{u}(\Bw ) - \Fq  \BBI.
\end{equation*}
Above $\BBD_{u}(\Bw):=\nabla_{\xi}^{\top} \Bw \cdot \sA_u^{\top} + \sA_u \cdot \nabla_\xi \Bw^{\top}$ for any smooth $\Bw$ and $\Fq.$ 
\item Suppose that $\Bn$ and $\Bn_+$ are the unit normal for $\Ga$ and $\Ga_+$ respectively. Set that
\begin{equation*}
(\oBn, \oBn_+)(\xi,t)
:=(\Bn_t, \Bn_{+,t} )\big(\BX_u (\xi,t)\big)
= \Big( \frac{\sA_{u} \Bn}{|\sA_{u}\Bn|},
\frac{\sA_{u_+} \Bn_+}{|\sA_{u_+}\Bn_+|}\Big)(\xi,t),
\quad \forall \, \xi \in \Gamma \cup \Gamma_+.
\end{equation*}
\item For any vector $\Bnu$ and $\Bh$ defined along some surface $\CS,$ introduce the operator 
\begin{equation*}
\CT_{\Bnu}\Bh := \Bh-(\Bh\cdot \Bnu)\Bnu,
\end{equation*}
which is a projection into the hyperplane orthogonal to $\Bnu.$
\end{itemize}

Now the mass conservation law in \eqref{eq:INS} is reduced to  $\rho\big( \BX_{u }(\xi,t) ,t\big) = \rho_0(\xi)$ by \eqref{eq:Lagrange}, and set that  $\Fq(\xi, t) := \Fp\big( \BX_{u }(\xi,t) ,t\big).$
Then it is not hard to verify that $( \Bu, \Fq)$ satisfies the following equations 
\begin{equation}\label{eq:INSL}\tag{$INS_{\pm}^{\FL}$}
	\left\{\begin{aligned}
\rho_{0}\pa_t\Bu- \Di_u \BBT_u (\Bu ,\Fq ) = \rho_0 \Bf \big( \BX_{u }(\xi, t) ,t\big), \,\,\, \di_{u} \Bu  = 0               
                        &\quad\mbox{in}\quad \dOm  \times ]0,T[, \\
		\jump{\BBT_u(\Bu,\Fq) \oBn} =\jump{\Bu}= \0  
		                &\quad\mbox{on}\quad \Ga\times ]0,T[,  \\
	    \BBT_{u_+}(\Bu_+,\Fq_+) \oBn_+  = \0  
	                    &\quad\mbox{on}\quad \Ga_+\times ]0,T[,  \\		
	   \Bu_- = \0  
	                    &\quad\mbox{on}\quad \Ga_{-}\times ]0,T[, \\
		\Bu|_{t=0}=\Bv_{0} 
		                &\quad\mbox{in}\quad \dOm.
	\end{aligned}\right.
\end{equation}
From the incompressibility condition $\eqref{eq:INSL}_2,$ we remark that $\sA_u = (\nabla_\xi \BX_u^{\top})^{-1}$ due to $\det (\nabla_{\xi}^{\top} \BX_u) =1$ and \emph{Liouville Theorem}.  
As our method for local solvability issue could be applied to the domain $(\Omega_3)$, we would like to keep the boundary condition on $\Gamma_-$ for a while.
\medskip

To seek suitable functional space for $\Bv_0$ in \eqref{eq:INSL}, recall the linear mapping $\CK$ in Remark \ref{rmk:welliptic}. For any $1<q<\infty$ and any vector $\Bu \in W^2_q(\dOm)^N,$  consider 
\begin{align*}
\Balpha_u &:=\eta^{-1} \Di \big(\mu \BBD(\Bu)\big)-\nabla \di \Bu,\\
\beta_u &:= \jump{\mu \BBD(\Bu)\Bn} \Bn - \jump{\di \Bu},\\
\gamma_u &:= \big(\mu \BBD(\Bu)\Bn_+\big)\Bn_+ -\di \Bu,
\end{align*}
and write $K(\Bu):= \CK(\Balpha_u,\beta_u,\gamma_u) \in W^1_q(\dOm)+ \widehat{W}^1_{q,\Gamma_+}(\Omega)$ for short. 
\smallbreak

Next, keeping $K(\Bu)$ in mind,  we introduce \emph{Stokes} operator for two phase problem
$$\CA_q \Bu := \eta^{-1} \Di \BBT\big(\Bu,K(\Bu)\big),$$  whose domain $\CD (\CA_q)$ is given by 
\begin{align*}
\CD (\CA_q) := \{\Bu \in W^{2}_q (\dOm)^N \cap J_q (\dOm) :\,\,
 & \jump{\Bu}|_{\Ga}=\jump{\CT_{\Bn} \big(\mu \BBD (\Bu) \Bn\big)}|_{\Ga}  =  \0,\\
&\CT_{\Bn_+} \big(\mu \BBD (\Bu) \Bn_+\big)|_{\Ga_+}
 =\0
 \quad \hbox{and}\quad  \Bu|_{\Ga_-}=0\}.
\end{align*}
Above the hydrodynamic Lebesgue space $J_q (\dOm) := \{ \Bf \in L_q(\Om)^N: (\Bf, \nabla \varphi )_{\dOm}=0 \quad \forall \varphi \in \widehat{W}^{1}_{q',\Gamma_+}(\Om)\}.$
Additionally, recall the real interpolation functor (see Section \ref{sec:fs}) and set
\begin{equation*}
\CD^{2-2\slash p}_{q,p}(\dOm) := \big(J_q(\dOm), \CD(\CA_q) \big)_{1- 1\slash p,p}
\,\,\,\hbox{and}\,\,\,
W^{2,1}_{q,p}(\dOm\times I) := L_p\big(I; W^2_q(\dOm)^N\big) \cap W^1_p\big(I;L_q(\dOm)^N\big)
\end{equation*}
for any $1<p<\infty$ and some time interval $I \subset \BBR.$
More discussions on $\CD^{2-2\slash p}_{q,p}(\dOm)$ are postponed to Appendix \ref{appendix:interpolation}.
Now our main result upon the local solvability of System \eqref{eq:INSL} for the cases $(\Omega_1)-(\Omega_3)$ reads as follows.
\begin{theo}\label{thm:main_local}
Let $(p,q)$ be in $(I) \cup (II)$ with the sets $(I)$ and $(II)$ given by 
\begin{equation*}
(I) :=\{(p,q)\in ]2,\infty[\times ]N,\infty[\}
\quad \hbox{and}\quad 
(II) :=\{(p,q)\in ]1,2[ \times ]N,\infty[\,: 1\slash p + N\slash q  >3\slash 2\}.
\end{equation*} 
Additionally, hypotheses $(\CH1)-(\CH3)$ are fulfilled for some $\eta:= \eta_+ \mathds{1}_{\Omega_+}+ \eta_- \mathds{1}_{\Omega_-}$ $(\eta_\pm>0)$ and $r \geq q.$
Assume that $\rho_0 \in \wh W^1_q(\dOm),$
$\Bv_0 \in \CD^{2-2\slash p}_{q,p}(\dOm)$ and $\Bf \in L_p\big(0,2;W^1_\infty(\BBR^N)^N\big).$ 
If, in addition, $\|\rho_0 -\eta\|_{L_\infty(\dOm)} \leq c$ for some constant $c \ll 1,$ then there are some time instant $T(<1)$ and constant $C,$ only depending on $p,$ $q,$ $\Bv_0$ and $\Bf,$ such that the System \eqref{eq:INSL} admits a unique solution $(\Bu,\Fq)$ satisfying
\begin{equation}\label{eq:uq_est}
\|\Bu\|_{W^{2,1}_{q,p}(\dOm\times ]0,T[)} 
+\|\nabla \Fq\|_{L_p(0,T;L_q(\dOm))} \leq C.
\end{equation} 
In addition, if $\mu$ is piecewise constant, we can relax the constrain $\rho_0 \in \wh W^1_q(\dOm)$ to $\rho_0 \in L_\infty(\dot\Omega).$
\end{theo}
\medskip

For the case of $(\Omega_1),$ the hypothese $(\CH 2)$ is fulfilled for any  
$\eta := \eta_+ \mathds{1}_{\Omega_+} + \eta_- \mathds{1}_{\Omega_-}$ ($\eta_\pm >0$)
due to \cite{Shi2016} by Y.Shibata. Our second result is about the unique long time solution of \eqref{eq:INSL} in the case of $(\Omega_1).$ 
To this end, let us introduce the rigid motion space
\begin{equation*}
\CR_d := \{ \Bp(x) = \BBA x + \Bb: \,\BBA\, 
\hbox{is an}\, N\times N \,\hbox{anti-symmetric matrix and}\, \, 
\Bb \in \BBR^N\}.
\end{equation*}
As $\BBA$ is anti-symmetric, it is easy to verify that 
$\di \Bp =0 \,\,\,\hbox{and}\,\,\, \BBD (\Bp)=0$ for any $\Bp \in \CR_d.$
Without loss of generality,  set $M:= \dim \CR_d  \in \BBN$ and then there exist a basis 
\begin{equation*}
\FP:=\{\Bp_\alpha \in \CR_d :  (\eta \, \Bp_\alpha, \Bp_\beta)_{\dOm} = \delta^{\alpha}_{\beta}, \,\,\,
\hbox{for any}\,\, 1 \leq \alpha, \beta \leq M\},
\end{equation*}
such that $\CR_d := span\{\,\Bp_\alpha \in \FP\}.$ 

\begin{theo}\label{thm:main_global}
Let $(p,q) \in (I) \cup (II)$ as in Theorem \ref{thm:main_local} and $\Omega$ be a bounded 
$W^{2-1\slash r}_r$ ($r \geq q$) domain satisfying $(\Omega_1).$
Assume that  $\rho_0 (\xi) =\eta = \eta_+ \mathds{1}_{\Omega_+} +\eta_- \mathds{1}_{\Omega_-} $ and 
$\mu = \mu_+ \mathds{1}_{\Omega_+} +\mu_- \mathds{1}_{\Omega_-} $ are piecewise constant for any $\eta_\pm, \mu _\pm>0.$ If $\| \Bv_0 \|_{\CD^{2-2\slash p}_{q,p}(\dOm)} \ll 1$ such that 
$(\eta \Bv_0, \Bp_\alpha )_{\dOm} =0$ for any $\Bp_\alpha \in \FP,$
then \eqref{eq:INSL} admits a unique global solution $(\Bu, \Fq).$
Moreover, there exists constant $\ep_0$ and $C$ such that
\begin{equation*}
\|e^{\ep_0 t} \Bu \|_{W^{2,1}_{q,p}(\dOm\times ]0,T[)} 
+\|e^{\ep_0 t}\nabla \Fq\|_{L_p(0,T;L_q(\dOm))} \leq C \| \Bv_0 \|_{\CD^{2-2\slash p}_{q,p}(\dOm)} 
\,\,\,\hbox{for any}\,\, T > 0.
\end{equation*} 
\end{theo}

\begin{rema} Now let give some comments on Theorem \ref{thm:main_global}.
\begin{enumerate}

\item Firstly, let us indicate that above results on \eqref{eq:INSL} yield the solvability for the system \eqref{eq:INS} in Euler coordinates. For instance, we consider the short time result in Theorem \ref{thm:main_local}.
Thanks to \eqref{eq:uq_est}, $\BX_u$ in \eqref{eq:Lagrange} is well defined as long as $T$ is small enough. Moreover, $\BX_u$ is a $\CC^1$ diffeomorphism from $\dOm$ onto $\dOm_t$ and measure preserving. 
Denote $\BX_u^{-1}.$ the inverse mapping of $\BX_u$ For any smooth function $h$ over $\dOm$ and $r \in [1,\infty],$ we have 
\begin{equation*}
\|h \cdot \BX_u^{-1}\|_{L_r(\dOm_t)} \lesssim \|h\|_{L_r(\dOm)}.
\end{equation*}
Recall that $(\rho, \Bv, \Fp) = (\rho_0, \Bu,\Fq)\circ \BX_u^{-1}$ and $\sA_u:= (\nabla_{\xi}^{\top} X_u)^{-1}.$ 
Then by Lemma \ref{lem:A_DA}, \eqref{eq:uq_est} and  
\begin{equation*}
\nabla_x (\rho, \Bv, \Fp) = \big(\sA_u \nabla_\xi(\rho_0,\Bu,\Fq)\big)\circ \BX_u^{-1}, 
\end{equation*}
we obtain that the components of $\nabla_x (\rho, \Bv, \Fp)$ belong to $L_p\big(0,T;L_q(\dOm_t)\big).$ 
On the other hand, we can bound $\d_t \Bv$ in $L_p\big(0,T;L_q(\dOm_t)\big)^N$ according to
$\d_t \Bv = (\d_t \Bu)\circ \BX_u^{-1}
  + (\Bu \circ \BX_u^{-1}) \cdot\nabla_x \Bv. $
Thus one can verify that the second order derivatives of $\Bv$ are bounded in 
$L_p\big(0,T;L_q(\dOm_t)\big)^N$ by using the momentum equations $\eqref{eq:INSL}_1.$
For simplicity, we omit the statements of the exact theorems concerning \eqref{eq:INS} here.

\item In fact, the index set $(II)$ allows us to handle less regular initial states due to the definition of $\CD^{2-2\slash p}_{q,p}(\dOm).$ On the other hand, compared with the recent contributions \cite{DanM2012, DanZhx2017b, LiaoZh2016, LiaoZh2016b, GG2018} on the density patch problem for inhomogeneous Navier-Stokes system, we prove that the global density patches (in Eulerian coordinates) with imposing the free boundary conditions and \emph{almost} critical initial velocity in $L_p-L_q$ framework.

\end{enumerate}
\end{rema}

 The rest of the paper unfolds as follows. In next section, we will recall the notations of functional spaces and linear estimates. Then in Section \ref{sec:local}, we will prove Theorem \ref{thm:main_local}. Section \ref{sec:decay} is devoted some decay property, which is important for our discussions on Theorem \ref{thm:main_global} in Section  \ref{sec:global}.

\section{Functional spaces and some linear estimates}\label{sec:fs}

\subsection{Functional spaces}
In this part,  we shall introduce some functional spaces used throughout this paper. For any domain $G \subset \BBR^m$ ($1\leq m\in \BBN$) and some Banach space $E,$  $W^k_p(G;E)$ stands for the standard $E-$valued Sobolev space
for $1 \leq p \leq \infty$ and $k\in \BBN.$ Whenever $E$ coincides with $\BBR$ or $\BBC,$ we just write $W^k_p(G)$ for the collection of scalar-valued Sobolev functions. Moreover, the $L_p(G)$ ($L_p(G;E)$) stands for usual ($E-$valued) Lebegue spaces. In addition, similar conventions for $W^k_{p,loc}(G;E)$ and the homogeneous spaces $\widehat{W}^k_p(G;E)$ are admitted.  
For some open interval $I$ in $\BBR$ and some constant $\gamma>0,$ we define the following exponentially weighted Lebesgue and Sobolev spaces,
\begin{align*}
L_{p, \ga}(I; E) &:= \{ f: I \rightarrow E : e^{-\ga t} f \in L_p (I; E)\},
\\
L_{p, 0}(\BBR; E) &:= \{ f \in L_{p} (\BBR; E):  f(\cdot, t) = 0\,\, \hbox{for}\,\, t<0 \},\\
L_{p, 0,\ga}(\BBR; E) &:= \{ f \in L_{p,\gamma} (\BBR; E):  f(\cdot, t) = 0\,\, \hbox{for}\,\, t<0 \},\\
W^m_{p, \ga}(I; E) &:= \{ f \in L_{p,\gamma} (I; E) : \pa_t^j f (\cdot, t)\in L_{p,\gamma} (I; E), \,\,  1\leq j \leq m \}, \\
W^m_{p, 0,\ga}(\BBR; E) &:= W^m_{p, \ga}(\BBR; E) \cap L_{p, 0,\ga}(\BBR; E). 
\end{align*}
Moreover, the norm of $W^m_{p,0,\ga}(\BBR; E)$ with $m\geq 0$ is given by
\begin{equation*}
\|f\|_{W^m_{p,0,\ga}(\BBR; E)} := \sum_{0 \leq j \leq m} \|e^{-\ga t}\pa_t^j f (\cdot, t)\|_{L_{p} (\BBR; E)}.
\end{equation*}
\smallbreak

Recall the notion of \emph{Japanese} bracket $\langle y \rangle:= (1 + \left| y \right|^2)^{1\slash 2}$ for any $y \in \BBR^m$ and $\langle D_y \rangle (:= (I-\Delta_y)^{1\slash 2})$ denotes the Fourier multiplier whose symbol is $\langle y \rangle.$ By such kind of multiplier, the standard and weighted Bessel potential spaces are defined for $s \geq 0$ as below,
\begin{align*}
H^{s}_{p}(\BBR; E):=& \{ f\in  L_{p}(\BBR; E):  (\langle D_t \rangle^s  f)(\cdot, t) \in L_p (\BBR; E) \}, \\
H^{s}_{p,0,\gamma}(\BBR; E):=& \{ f\in  L_{p,0,\gamma}(\BBR; E): e^{-\gamma t} (\langle D_t \rangle^s  f)(\cdot, t) \in L_p (\BBR; E) \}.
\end{align*}
The norm of $H^{s}_{p}(\BBR; E)$ is given by $\|f\|_{H^{s}_{p}(\BBR; E)}
:=\|\langle D_t \rangle f\|_{L_p (\BBR; E)}$
and the norm $\|\cdot\|_{H^{s}_{p,0,\gamma}(\BBR; E)}$ is defined similarly.
Furthermore, we introduce the following mixed derivative spaces for $ 1 \leq  q \leq \infty,$
\begin{align*}
W^{2,1}_{q,p}(G \times I) :=& L_p\big(I; W^2_q(G)\big) \cap W^1_p\big(I;L_q(G)\big),\\
W^{2,1}_{q,p,0,\gamma}(G \times \BBR) :=& L_{p,0,\gamma}\big(\BBR; W^2_q(G)\big) \cap W^1_{p,0,\gamma}\big(\BBR;L_q(G)\big),\\
H^{1,1\slash 2}_{q,p}(G \times \BBR):=& L_p\big(\BBR; W^1_q(G)\big) \cap H^{1\slash 2}_p \big(\BBR; L_q(G)\big),\\
H^{1,1\slash 2}_{q,p,0}(G \times \BBR):=& L_{p,0} \big( \BBR; L_q(G)\big) \cap  H^{1,1\slash 2}_{q,p}(G \times \BBR) ,\\
H^{1,1\slash 2}_{q,p,0,\gamma}(G \times \BBR):=& L_{p,0,\gamma}\big(\BBR; W^1_q(G)\big) \cap H^{1\slash 2}_{p,0,\gamma} \big(\BBR; L_q(G)\big).
\end{align*}
Here for any Banach spaces $E_0$ and $E_1$ embedded in tempered distribution space, the norm of $E_0 \cap E_1$ is understood by $\|\cdot \|_{E_0 \cap E_1} := \|\cdot\|_{E_0} +\|\cdot\|_{E_1}.$ Besides, $\CL(E_0, E_1)$ is the family of all bounded linear mapping from $E_0$ to $E_1.$ Moreover, we also employ the notations $(E_0, E_1)_{\theta, p}$ and  $(E_0, E_1)_{[\theta]}$ for the real and complex interpolation functors between the (interpolation) couple $E_0$ and $E_1$ respectively for any $\theta \in ]0,1[$ and $p\in [1,\infty]$ (for more details see \cite{BerLof1976}). 
\smallbreak

\subsection{Linear estimates}
The study of \eqref{eq:INSL} is based on the following linear two phase Stokes equtions in some fixed domain 
$\Om = \dOm \cup \Gamma
= \Omega_+ \cup \Omega_- \cup \Gamma,$
\begin{equation}\label{eq:S}\tag{$S_{\pm}$}
	\left\{\begin{aligned}
	  \pa_t\Bu - \eta^{-1}\Di \BBT(\Bu,\Fq) = \Bf,\,\,\,
         \di\Bu = g=\di \BR  		
		  &\quad\mbox{in}\quad \dOm \times ]0,T[,  \\
		\jump{\BBT(\Bu,\Fq)\Bn} = \jump{\Bh}, \,\,\,\jump{\Bu}=\0 
		&\quad\mbox{on}\quad \Gamma \times ]0,T[,  \\
		\BBT_+(\Bu_+,\Fq_+)\Bn_+ = \Bk &\quad\mbox{on}\quad \Ga_{+}\times ]0,T[, \\
				\Bu_- = \0 &\quad\mbox{on}\quad \Ga_{-}\times ]0,T[, \\
	\Bu|_{t=0}= \Bu_0 &\quad\mbox{in}\quad \dOm.
	\end{aligned}\right.
\end{equation}
where $\eta := \eta_+ \mathds{1}_{\Om_+} +\eta_- \mathds{1}_{\Om_-}$ for some $\eta_{\pm} >0.$ 
In \eqref{eq:S}, we assume that the viscosity coefficient $\mu$ satisfies $(\CH 3').$ Namely,
\begin{enumerate}
\item[$(\CH 3')$] For some $r$ given in $(\CH 1),$  take some $\mu(\cdot)$ in $W^{1}_{r}(\dOm)$ satisfying 
\begin{equation*}
\ubar{\mu}_{+} \mathds{1}_{\Omega_+}+\ubar{\mu}_{-}\mathds{1}_{\Omega_-}
  \leq \mu(\cdot) \leq
  \bar{\mu}_{+} \mathds{1}_{\Omega_+}+  \bar{\mu}_{-}  \mathds{1}_{\Omega_-},
\end{equation*}
with constants $ \ubar{\mu}_{\pm},$ $  \bar{\mu}_{\pm}>0.$ 
\end{enumerate}
\smallbreak

Now we denote the space 
\footnote{ Compared with the \cite[Sec. 1.2]{MarSai2017}, our definition of $\BW^{-1}_q(\Omega)$ here is slightly more  general in order to handle the domains with the exterior bulk.} 
 $\BW^{-1}_q(\Omega)$ for $1 <q < \infty$ by
\begin{equation*}
\BW^{-1}_q(\Omega):= \{ g \in L_q(\Omega) : \,\exists\, \BR \in L_q(\Omega)^N \,\,\, \hbox{such that}\,\, (g,\varphi)= - (\BR, \nabla \varphi)_{\Omega}, \,\, \forall \varphi \in W^{1}_{q',\Gamma_+}(\Omega)\}.
\end{equation*}
Moreover, we say such $\BR$ above is in $\CG(g)$ for some $g \in \BW^{-1}_q(\Omega).$
Then \cite[Theorem 2.8]{MarSai2017} concerning \eqref{eq:S} reads as follows.  
\begin{theo} \label{thm:stokes}
Let $(p,q) \in ]1,\infty[^2$ and $r \geq \max\{q, q'\}$ with $ q' := q \slash (q-1).$ 
Assume that $(\CH 1)$, $(\CH 2)$ and $(\CH 3')$ are fulfilled.
Then there exists some constants $\gamma_0 \geq 1$ and $C_{p,q,\ga_0}$  such that the following assertions hold true by taking $T= \infty$ in \eqref{eq:S}.
\begin{enumerate}
\item   For any $\Bu_0\in \CD^{2-2\slash p}_{q,p}(\dOm)$ and $(\Bf, \BR, \Bh,\Bk)= \0 \in \BBR^{4N},$ \eqref{eq:S}  admits a unique solution 
\begin{equation*}
(\Bu, \Fq) \in W^{2,1}_{q,p,\gamma_0} (\dOm \times \BBR_+) \times L_{p,\gamma_0}\big( \BBR_+;W^{1}_q(\dOm) + \widehat{W}^1_{q,\Gamma_+}(\Om)\big).
\end{equation*}
Moreover, we have
\begin{equation*}
\|e^{-\gamma_0 t} (\pa_t \Bu, \Bu,\nabla \Bu, \nabla^2 \Bu,\nabla \Fq)\|_{L_p(\BBR_+;L_q(\dOm))}
\leq C_{p,q,\gamma_0} \|\Bu_0\|_{\CD^{2-2\slash p}_{q,p}(\dOm)}.
\end{equation*}

\item Assume that $\Bu_0 =\0$ and  $(\Bf, \BR, g, \Bh,\Bk) \in \CY_{p,q,\ga_0}.$
In other words, $\Bf, \BR, g, \Bh,$ and $\Bk$ satisfy
\begin{gather*}
\Bf \in L_{p,0,\ga_0}\big(\BBR; L_q(\dOm)^N\big), \,\,\,
\BR \in W^1_{p,0,\ga_0}\big(\BBR; L_q(\dOm)^N\big),\\ 
g \in  L_{p,0,\ga_0}\big(\BBR; W^1_q(\dOm) \cap \BW^{-1}_q (\Om)\big) \cap H^{1\slash 2}_{p,0,\ga_0}\big( \BBR; L_q(\dOm)\big),\\
\Bh \in  L_{p,0,\ga_0}\big(\BBR; W^1_q(\dOm)^N\big) \cap H^{1\slash 2}_{p,0,\ga_0}\big( \BBR; L_q(\dOm)^N\big),\\
\Bk \in  L_{p,0,\ga_0}\big(\BBR; W^1_q(\Om_+)^N\big) \cap H^{1\slash 2}_{p,0,\ga_0}\big(\BBR; L_q(\Om_+)^N\big),
\end{gather*}
with $\BR \in \CG(g).$ 
Then \eqref{eq:S} admits a unique solution 
\begin{equation*}
(\Bu, \Fq) \in W^{2,1}_{q,p,0,\gamma_0}\big(\dOm \times \BBR \big) 
   \times L_{p,0,\gamma_0}\big(\BBR; W^1_q (\Om) + \widehat{W}^1_{q,\Gamma_+}(\Om) \big),  
\end{equation*}
possessing the following estimates
\begin{equation*}
 \| e^{-\ga_0 t}(\pa_t \Bu, \Bu, \JDt^{1\slash 2} \nabla \Bu,  \nabla^2 \Bu, \nabla \Fq)\|_{L_p(\BBR; L_q(\dOm))}
\leq C_{p,q,\ga_0} \|(\Bf, \BR, g, \Bh,\Bk)\|_{\CY_{p,q,\ga_0}}.
\end{equation*}
Above, the norm $\|\cdot\|_{\CY_{p,q,\ga_0}}$ is given by 
\begin{align*}
\|(\Bf, \BR, g, \Bh,\Bk)\|_{\CY_{p,q,\ga_0} }
& := \| e^{-\ga_0 t}(\Bf, \pa_t \BR)\|_{L_p(\BBR; L_q(\dOm))}  
+\| e^{-\ga_0 t} (g, \Bh)\|_{L_p(\BBR; W^1_q(\dOm))}\\
 & \quad +\| e^{-\ga_0 t} \Bk\|_{L_p(\BBR; W^1_q(\Om_+))}
+\| e^{-\ga_0 t} \JDt^{1\slash 2}(g, \Bh)\|_{L_p(\BBR; L_q(\dOm))} \\
&\quad +\| e^{-\ga_0 t} \JDt^{1\slash 2}\Bk\|_{L_p(\BBR; L_q(\Om_+))}  .
\end{align*}
\end{enumerate}
\end{theo}
\medskip


\section{Local wellposedness of $\eqref{eq:INSL}$}   \label{sec:local}
This section is dedicated to the proof of Theorem \ref{thm:main_local}.
As the first step, we will reduce \eqref{eq:INSL} to some linear system and outline the main idea of the proof in Section \ref{ssec:local_1}. 
Next, to apply Theorem \ref{thm:stokes},  we shall derive some concrete estimates in Section \ref{ssec:local_2}. Finally, the stability of the reduced system will be verified in Section \ref{ssec:local_3}, which yields the uniqueness.
\subsection{Reduction and the main strategy}\label{ssec:local_1}
For convenience, we rewrite \eqref{eq:INSL} into "Stokes-like" form as follows:
\begin{equation}\label{eq:INSLL}
	\left\{\begin{aligned}
		\pa_t\Bu - \eta^{-1}\Di_\xi \BBT(\Bu,\Fq) = \Bf_{u,\Fq},\,\,\,
		\di_\xi \Bu = g_u=\di_\xi \BR_u    
		                         &\quad\mbox{in}\quad \dOm \times ]0,T[,  \\
\jump{\BBT(\Bu,\Fq)\Bn} = \jump{\Bh_{u,\Fq}}, \quad \jump{\Bu}=\0
                                &\quad\mbox{on}\quad \Ga\times ]0,T[,  \\		
\BBT(\Bu_+,\Fq_+)\Bn_+ = \Bk_{u_+,\Fq_+}  
                                &\quad\mbox{on}\quad \Gamma_+ \times ]0,T[,  \\	
				\Bu_- = \0 &\quad\mbox{on}\quad \Ga_{-}\times ]0,T[, \\
	\Bu|_{t=0}= \Bv_0 &\quad\mbox{in}\quad \dOm,
	\end{aligned}\right.
\end{equation}
where the nonlinear terms
$(\Bf_{u,\Fq}, g_u, \BR_u,  \Bh_{u,\Fq}, \Bk_{u_+\Fq_+})$ are defined by 
\begin{equation*}
 \eta \Bf_{u,\Fq}:= \rho_0 \Bf\big( \BX_{u}(\xi,t) ,t\big)  
+  (\eta-\rho_0) \pa_t \Bu 
-  \Di_\xi \big(\BBT(\Bu,\Fq)- \BBT_u(\Bu,\Fq)\sA_u\big) ,
\end{equation*}
\begin{equation*}
g_u:= \nabla_{\xi}^{\top}\Bu: (\BBI - \sA^{\top}_u ) ,\quad \BR_u:= (\BBI - \sA^{\top}_u ) \Bu,
\end{equation*}
\begin{equation*}
\Bh_{u,\Fq}:=  \BBT(\Bu,\Fq)\Bn - \BBT_u(\Bu,\Fq)\oBn
\quad\hbox{and}\quad
\Bk_{u_+,\Fq_+}:= \BBT(\Bu_+,\Fq_+)\Bn_+ - \BBT_{u_+}(\Bu_+,\Fq_+)\oBn_+.
\end{equation*}

As a starting point, we consider the following linear system with initial state $\Bv_0,$
\begin{equation}\label{eq:uq_L}
	\left\{\begin{aligned}
	 \pa_t\Bu_{_L} - \eta^{-1} \Di_\xi \BBT(\Bu_{_L},\Fq_{_L}) = \0, \,\,\,
		\di_\xi \Bu_{_L} = 0   &\quad\mbox{in}\quad \dOm \times \BBR_+,  \\
		\jump{\BBT(\Bu_{_L},\Fq_{_L})\Bn} = \jump{\Bu_{_L}}=\0 &\quad\mbox{on}\quad \Ga\times \BBR_+,  \\
		\BBT(\Bu_{_{L,+}},\Fq_{_{L,+}})\Bn_+ = \0 &\quad\mbox{on}\quad \Ga_{+}\times \BBR_+, \\
				\Bu_{_{L,-}} = \0 &\quad\mbox{on}\quad \Ga_{-}\times \BBR_+, \\
	\Bu_{_L}|_{t=0}= \Bv_0 &\quad\mbox{in}\quad \dOm.
	\end{aligned}\right.
\end{equation}
Thanks to Theorem \ref{thm:stokes}, we obtain a (unique) global solution $(\Bu_{_L}, \Fq_{_L})$ of \eqref{eq:uq_L} satisfying
\begin{equation*}
(\Bu_{_L}, \Fq_{_L}) \in W^{2,1}_{q,p,\ga_0}(\dOm \times \BBR_+) \times L_{p,\ga_0}\big(\BBR_+; W^1_q (\dOm) + \widehat{W}^1_{q,\Gamma_+}(\Om)\big)
\end{equation*}
for some $\gamma_0 \geq 1.$ Furthermore, there exists  some $C_{p,q,\gamma_0}>0$ such that
\begin{equation}\label{es:uq_L1}
\|e^{-\gamma_0 t} (\pa_t \Bu_{_L}, \Bu_{_L},\nabla \Bu_{_L}, \nabla^2 \Bu_{_L},\nabla \Fq_{_L})\|_{L_p(\BBR_+;L_q(\dOm))}
\leq C_{p,q,\gamma_0} \|\Bv_0\|_{\CD^{2-2\slash p}_{q,p}(\dOm)}.
\end{equation}
By the definition of $\CD^{2-2\slash p}_{q,p}(\dOm)$ and the classical embedding
\begin{equation}\label{eq:embedding}
L_p(I;E_1) \cap W^1_p (I;E_0) \hookrightarrow \BUC\big(I;(E_0,E_1)_{1-1\slash p, p}\big) \quad \forall \,p \in ]1,\infty[,
\end{equation}
we have 
\begin{equation*}
\|e^{-\ga_0 t}  \Bu_{_L}\|_{L_\infty(\BBR_+;\CD^{2-2\slash p}_{q,p}(\dOm))}  
\lesssim  \|e^{-\ga_0 t} \pa_t \Bu_{_L} \|_{L_p(\BBR_+;L_q(\dOm))} 
+\|e^{-\ga_0 t}  \Bu_{_L} \|_{L_p(\BBR_+;W^2_q(\dOm))}.
\end{equation*}
Then above inequality and \eqref{es:uq_L1} yield 
\begin{equation}\label{es:uq_L2}
\|\Bu_{_L}\|_{L_\infty(0,T;\CD^{2-2\slash p}_{q,p}(\dOm))}
\leq C_{p,q,\ga_0}e^{\ga_0 T} \|\Bv_0\|_{\CD^{2-2\slash p}_{q,p}(\dOm)}, 
\quad \hbox{for any finite} \,\,\, T>0.
\end{equation}
Combining \eqref{es:uq_L1} and \eqref{es:uq_L2}, 
we arrive for any finite $T>0,$
\begin{equation}\label{es:uq_L}
\|\Bu_{_L}\|_{L_\infty(0,T;\CD^{2-2\slash p}_{q,p}(\dOm))} +
\| (\pa_t \Bu_{_L}, \Bu_{_L},\nabla \Bu_{_L}, \nabla^2 \Bu_{_L},\nabla \Fq_{_L})\|_{L_p(0,T;L_q(\dOm))}
\leq C_{p,q,\ga_0} e^{\gamma_0 T} \|\Bv_0\|_{\CD^{2-2\slash p}_{q,p}(\dOm)}.
\end{equation}
\medbreak

With $(\Bu_{_L}, \Fq_{_L})$ in mind,  we are looking for some solution $(\Bu, \Fq)$ of \eqref{eq:INSLL} which coincides with $(\Bu_{_L} + \BU, \Fq_{_L} + Q)$ in some short time interval $]0,T[\subset ]0,1[.$ Thanks to  \eqref{eq:INSLL} and \eqref{eq:uq_L}, $(\BU, Q)$ is determined by the following system
\begin{equation}\label{eq:UQ}
	\left\{\begin{aligned}
	 \pa_t\BU -	\eta^{-1} \Di_\xi \BBT(\BU, Q) = \oBf_{_{U,Q}},\,\,\,
		\di_\xi \BU = \og_{_U} =\di_\xi \oBR_{_U} 
		  &\quad\mbox{in}\quad \dOm \times ]0,T[,  \\
		\jump{\BBT(\BU, Q)\Bn} = \jump{\oBh_{_{U,Q}} }, \,\,\,\jump{\BU}=\0 &\quad\mbox{on}\quad \Ga\times ]0,T[,  \\
		\BBT_+(\BU_+, Q_+)\Bn_+ = \oBk_{_{U_+,Q_+}}  &\quad\mbox{on}\quad \Ga_{+}\times ]0,T[, \\
				\BU_- = \0 &\quad\mbox{on}\quad \Ga_{-}\times ]0,T[, \\
	\BU|_{t=0}= \0 &\quad\mbox{in}\quad \dOm.
	\end{aligned}\right.
\end{equation}
In \eqref{eq:UQ},  $(\oBf_{_{U,Q}}, \og_{_U}, \oBR_{_U},  \oBh_{_{U,Q}}, \oBk_{_{U_+,Q_+}} ) = (\Bf_{u,\Fq}, g_u, \BR_u,  \Bh_{u,\Fq}, \Bk_{u_+\Fq_+})$ 
as we defined below \eqref{eq:INSLL}. 
Then the local solvability of System \eqref{eq:INSLL} is reduced to studying \eqref{eq:UQ}.
\smallbreak

For convenience, let us introduce suitable solution spaces within $L_p-L_q$ maximal regularity framework.
For any  $(p,q) \in ]1,\infty[^2$ and $\gamma_0>0,$ set that
\begin{equation*}
\sE(T):=  W^{2,1}_{q,p}(\dOm \times ]0,T[)
        \times L_p\big(0,T; W^1_q(\dOm)+ \widehat{W}^1_{q,\Gamma_+}(\Om) \big) 
        \times H^{1,1\slash 2}_{q,p,0,\ga_0}(\dOm \times \BBR)
        \times H^{1,1\slash 2}_{q,p,0,\ga_0}(\Omega_+ \times \BBR)
\end{equation*}
with the norm $\|\cdot\|_{\sE(T)}$ given by
\begin{align*}
\|(\Bw,\nabla  P, \Pi,\Pi_+)\|_{\sE(T)} 
&:=\|\Bw\|_{W^{2,1}_{q,p}  (\dOm \times ]0,T[)}
 + \|\nabla P\|_{L_p(0,T; L_q(\dOm))}\\
 &\quad \quad+\|\Pi\|_{H^{1,1\slash 2}_{q,p,0,\ga_0}(\dOm\times \BBR)} 
 +\|\Pi_+\|_{H^{1,1\slash 2}_{q,p,0,\ga_0}(\Omega_+\times \BBR)} .
\end{align*}
Moreover, we say $(\Bw, P,\Pi,\Pi_+)$ belongs to $\sE_{L}(T)$ for some $L>0,$ if and only if the following assertions hold.
\begin{itemize}
\item $(\Bw, P, \Pi,\Pi_+)\in \sE(T)$ such that
\begin{align*}
\Bw|_{\Gamma_-} = \0 \,\,\, \hbox{on}\,\,\, \Gamma_-, \quad 
\jump{(P -\Pi)\Bn}=\jump{\Bw}=\0\,\,\,\hbox{on}\,\,\, \Gamma \times ]0,T[,\\
\hbox{and} \quad 
P_+ \Bn_+ = \Pi_+ \Bn_+ \,\,\,\hbox{on}\,\,\, \Gamma_+ \times ]0,T[; \hspace{2cm}
\end{align*}
\item The norm of $(\Bw,\nabla  P, \Pi,\Pi_+)$ is bounded by $L,$ i.e.
$\|(\Bw,\nabla  P, \Pi,\Pi_+)\|_{\sE(T)}  \leq L.$
\end{itemize}

\medskip

Now let us return to \eqref{eq:UQ}. Thanks to  \eqref{es:uq_L}, choose parameter $L=L(p,q,\gamma_0,\Bv_0,\Bf)$ such that
\begin{multline*}
\|\Bu_{_L}\|_{L_\infty(0,2;\CD^{2-2\slash p}_{q,p}(\dOm)) \cap W^{2,1}_{q,p}  (\dOm \times ]0,2[)} 
 + \|(\Bf,\nabla \Fq_{_L})\|_{L_p(0,2; L_q(\dOm))}\\
 \leq C_{p,q,\ga_0} e^{2\gamma_0} \|\Bv_0\|_{\CD^{2-2\slash p}_{q,p}(\dOm)}
 +\|\Bf\|_{L_p(0,2; L_q(\dOm))}
\leq L.
\end{multline*}
Fix any $(\Bw, P,\Pi,\Pi_+) \in \sE_{L}(T)$ for some $q >N$ and  $L$ chosen as above, and then consider the following \emph{linearized} Stokes equations with respect to \eqref{eq:UQ},
\begin{equation}\label{eq:UQwP}
	\left\{\begin{aligned}
		 \pa_t\BU - \eta^{-1}\Di_\xi \BBT(\BU, Q) = \oBf_{w,P},\,\,\,
		\di_\xi \BU = \og_w =\di_\xi \oBR_w
		  &\quad\mbox{in}\quad \dOm \times ]0,T[,  \\
		\jump{\BBT(\BU, Q)\Bn} = \jump{\oBh_{w,P} }, \,\,\,\jump{\BU}=\0 &\quad\mbox{on}\quad \Ga\times ]0,T[,  \\
		\BBT_+(\BU_+, Q_+)\Bn_+ = \oBk_{w_+,P_+}  &\quad\mbox{on}\quad \Ga_{+}\times ]0,T[, \\
				\BU_- = \0 &\quad\mbox{on}\quad \Ga_{-}\times ]0,T[, \\
	\BU|_{t=0}= \0 &\quad\mbox{in}\quad \dOm.
	\end{aligned}\right.
\end{equation}
Thus our goal turns to the construction of the solution mapping
$$(\BU,Q,\Xi,\Xi_+):= \Phi(\Bw,P,\Pi,\Pi_+) \in \sE_L(T)$$
 with $(\BU,Q)$ fulfilling \eqref{eq:UQwP} in $]0,T[.$ 
 In fact, this will be a consequence of Theorem \ref{thm:stokes} by assuming the smallness of $T$ and 
$ \|\eta-\rho_0\|_{L_\infty(\dOm)}.$
 \smallbreak 
 
Now, let us sketch the strategy to building such $\Phi.$ 
In Section \ref{ssec:local_2}, we shall find suitable extensions  $(\tBf_{w,P}, \tg_w, \tBR_w,  \tBh_{w,P}, \tBk_{w_+,P_+} )$ over $\BBR$ such that
\begin{equation*}
 (\tBf_{w,P}, \tg_w, \tBR_w,  \tBh_{w,P}, \tBk_{w_+,P_+})|_{]0,T[}=
 (\oBf_{w,P}, \og_w, \oBR_w,  \oBh_{w,P}, \oBk_{w_+,P_+})
\end{equation*}
More importantly, the following bound will be checked for some $\gamma_0 >1$ and $q>N,$ 
\begin{equation}\label{spc:ext}
\|(\tBf_{w,P}, \tg_w, \tBR_w,  \tBh_{w,P}, \tBk_{w_+,P_+})\|_{\CY_{p,q,\ga_0}} < \infty.
\end{equation} 
Thanks to Theorem \ref{thm:stokes} and \eqref{spc:ext},  we can solve
\begin{equation}\label{eq:tUtQwP}
	\left\{\begin{aligned}
		 \pa_t\tBU - \eta^{-1}\Di_\xi \BBT(\tBU, \tQ) = \tBf_{w,P},\,\,\,
		\di_\xi \tBU = \tg_w =\di_\xi \tBR_w   
		 &\quad\mbox{in}\quad \dOm \times \BBR_+,  \\
		\jump{\BBT(\tBU, \tQ)\Bn} = \jump{\tBh_{w,P} }, \,\,\,\jump{\tBU}=\0 &\quad\mbox{on}\quad \Ga\times \BBR_+,  \\
		\BBT_+(\tBU_+, \tQ_+)\Bn_+ = \tBk_{w_+,P_+}  &\quad\mbox{on}\quad \Ga_{+}\times \BBR_+, \\
				\tBU_- = \0 &\quad\mbox{on}\quad \Ga_{-}\times \BBR_+, \\
	\tBU|_{t=0}= \0 &\quad\mbox{in}\quad \dOm.
	\end{aligned}\right.
\end{equation}
Then with the solution $(\tBU,\tQ)$ of \eqref{eq:tUtQwP}, $(U,Q):= (\widetilde{U},\widetilde{Q})|_{[0,T[}$ is exactly the local solution of \eqref{eq:UQwP} on $]0,T[.$
Next we set that
\begin{align}
\Xi &:= \big(\mu(\rho_0) \BBD(\tBU)\Bn \big) \cdot\Bn - \tBh_{w,P} \cdot \Bn\label{eq:Xi}\\
\Xi_+ &:= \big(\mu(\rho_0) \BBD(\tBU_+)\Bn_+ \big) \cdot\Bn_+ - \tBk_{w_+,P_+} \cdot \Bn_+. \label{eq:Xi+}
\end{align}
Thanks to above definitions and the smallness of $T,$ we shall see that $\Phi(\Bw,P,\Pi,\Pi_+):= (\BU,Q,\Xi,\Xi_+)$ is a contracting mapping from $\sE_L(T)$ to itself in Section \ref{ssec:local_2}. This will complete our proof for local existence of \eqref{eq:UQ}, as well as \eqref{eq:INSLL}, by standard fixed point arguments. Moreover, the local solution of \eqref{eq:UQ} is unique by the discussions in Section \ref{ssec:local_3}.

\subsection{Solution operator of \eqref{eq:UQwP}}\label{ssec:local_2}
To construct the solution operator $\Phi$, it is convenient to introduce the operator $E_{(t)}$ as in \cite[Theorem 3.2]{Shi2015}. 
For any (scalar- or vector-valued) mapping $\Fh$ defined on $]0,T[$ and any fixed parameter $t \in ]0,T],$
\begin{equation*}
E_{(t)} \Fh(\cdot, s) := 
\left\{\begin{aligned}
		\Fh(\cdot, s) &\quad\mbox{if}\quad s\in ]0,t[, \\
		\Fh(\cdot, 2t-s) &\quad\mbox{if}\quad  s\in ]t,2t[, \\
		0 &\quad\mbox{otherwise}.
	\end{aligned}\right.
\end{equation*}
For any Banach space $E$ and $(p,\ga) \in ]1,\infty[\times]0,\infty[,$  we have
$E_{(t)} \in \CL \big(L_p(0,T;E),  L_{p,0,\gamma}(\BBR;E)\big).$ 
Indeed,
\begin{equation}\label{es:Fh}
\|e^{-\ga s}E_{(t)}\Fh(\cdot, s) \|_{L_p (\BBR; E)} \leq 2 
\|\Fh(\cdot, s) \|_{L_p (0,T; E)} , 
\quad \hbox{for any}\,\,\, \ga \geq 0 \,\,\,\hbox{and} \,\,\, t \in ]0,T].
\end{equation}
If $\Fh(\cdot,0) =0,$ then it is clear that
\begin{equation*}
\pa_s E_{(t)} \Fh(\cdot, s) = 
\left\{\begin{aligned}
		\pa_s\Fh(\cdot, s) &\quad\mbox{if}\quad s\in ]0,t[, \\
		-(\pa_s \Fh)(\cdot, 2t-s) &\quad\mbox{if}\quad  s\in ]t,2t[, \\
		0 &\quad\mbox{otherwise}.
	\end{aligned}\right.
\end{equation*}
Thus $E_{(t)} \in \CL \big(\widehat{W}^{1}_{p,0}(0,T;E),  \widehat{W}^1_{p,0,\gamma}(\BBR;E)\big).$ In fact, we easily show that
\begin{equation}\label{es:DFh}
\|e^{-\ga s} \pa_s \big(E_{(t)} \Fh(\cdot, s)\big) \|_{L_p (\BBR; E)} \leq 2 
\|(\pa_s \Fh)(\cdot, s) \|_{L_p (0,T; E)} 
\,\,\,\hbox{for any}\,\,\, \ga \geq 0 \,\,\,\hbox{and} \,\,\, t \in ]0,T].
\end{equation}
\smallbreak

Next, let us derive some useful estimates.
For simplicity, define that $(\BW, \Theta):= (\Bu_{_L}+\Bw, \FpL + P).$
Thanks to \eqref{es:Du_decay} and the conventions on $(T,L),$ we have 
\begin{equation}\label{es:decayW}
\|\nabla \BW\|_{L_{1} (0,T; L_\infty(\dOm))} \leq C_N T^{1\slash{p'}}
\|(\nabla \Bw, \nabla \Bu_{_L})\|_{L_{p} (0,T; L_\infty(\dOm))} 
 \leq C_N T^{1\slash{p'}+ \sigma_{p,q}} L,
\end{equation} 
where, for any $1<p<\infty$ and $N<q<\infty,$  the non-negative index $\sigma_{p,q}$ is given by
\begin{equation*}
\sigma_{p,q}:= 
\begin{cases}
 \big(1-N\slash q\big)\slash 2, 
    & \hbox{for}\quad  2\slash p + N\slash q>1;\\
  0,&  \hbox{for} \quad  2\slash p + N\slash q \leq 1.
  \end{cases}
\end{equation*}
Thus $X_{_W},$ $\sA_{_W}$ and $\oBn_{_W}:= (\sA_{_W}\Bn) \slash |\sA_{_W}\Bn|$ are well defined with $T$ satisfying
\begin{equation}\label{cdt:Tsmall1}
 C_N T^{1\slash{p'} + \sigma_{p,q}} L \leq 1\slash 2.
\end{equation}
Combining this decay property and Condition \eqref{cdt:Tsmall1}, we infer from Lemma \ref{lem:A_DA} and Lemma \ref{lem:normal} that
\begin{align}\label{ob:key}
\|(\sA_{_W}-\BBI,\oBn_{_W}-\Bn)\|_{L_\infty(\dOm\times]0,T[)} 
&\lesssim T^{1\slash{p'}+\sigma_{p,q}}L,&
\|\nabla_\xi (\sA_{_W}-\BBI,\oBn_{_W}-\Bn)\|_{L_\infty(0,T;L_q(\dOm))}
&\lesssim T^{1\slash{p'}} L,\\ \nonumber
\|\pa_t (\sA_{_W}-\BBI,\oBn_{_W}-\Bn)\|_{L_p(0,T;L_\infty(\dOm))}
 &\lesssim T^{\sigma_{p,q}}L,&
\|\pa_t \nabla_\xi (\sA_{_W}-\BBI,\oBn_{_W}-\Bn)\|_{L_p(0,T;L_q(\dOm))}
&\lesssim  L.
\end{align}

In the rest of this subsection, we devote ourselves to verifying the bound \eqref{spc:ext} with keeping \eqref{es:Fh} and \eqref{es:DFh} in mind.
The fact that  $W^1_q (\dOm) (\hookrightarrow \CC_b(\dOm))$ is a Banach algebra for $N<q <\infty$ will be also constantly used without mention. 
\subsubsection*{\underline{Bounds for $\tBf_{_{w,P}}$}}
Recall the definition of $\oBf_{_{w,P}}$ as follows,
\begin{align*}
\eta \oBf_{_{w,P}} &= \rho_0 \Bf\big( \BX_{_W}(\xi,t) ,t\big)+ (\eta-\rho_0) \pa_t \BW
                 -\Di_\xi \Big(\mu(\rho_0) \big(\BBH_{_W}
                 +\BBD(W)(\BBI-\sA_{_W}) \big) \Big)\\
              &\quad +\Di_\xi \big(\mu(\rho_0)\BBH_{_W}(\BBI-\sA_{_W})\big) 
                 +\Di_\xi \big(\Theta(\BBI-\sA_{_W})\big),
\end{align*}
where we adopt the notation
\begin{equation*}
\BBH_{_W} :=  \nabla_\xi^{\top}\BW (\BBI-\sA_{_W}^{\top}) +(\BBI-\sA_{_W})\nabla_\xi \BW^{\top}.  
\end{equation*} 

To seek some suitable extension of $\oBf_{w,P}$ in $L_{p,0,\ga_0}\big(\BBR;L_q(\dOm)\big),$
 let us first give some notations for convenience.
Assume that $\chi \in \CC^{\infty}(\BBR)$ is some cut-off function satisfying
$\chi (t) = 1$  for $|t| \leq 1$ and 
$\chi (t) = 0$ for $|t|\geq 2.$
Thanks to \eqref{es:uq_L}, define that
\begin{equation*}
\tBuL(\cdot,t) := \chi(t) e^{-|t| \CA_q} \Bv_0(\cdot) 
\quad\hbox{for}\,\,\, t\in \BBR.
\end{equation*}
From the definition of $\tBuL,$ it is obvious that 
$\tBuL(\cdot,t)= \BuL (\cdot,t)$ for any $t \in [0,1].$ Moreover, the mixed derivative theorem implies 
\begin{equation}\label{es:tBuL}
\|\tBuL\|_{H^{1\slash 2}_p(\BBR; W^1_q(\dOm))} \lesssim 
\|\tBuL\|_{W^{2,1}_{q,p}(\dOm \times \BBR)} \lesssim 
\|\BuL\|_{W^{2,1}_{q,p}(\dOm \times ]0,2[)} \lesssim L.
\end{equation}
Next with above $\tBuL$, consider $\tBW := E_{_{(T)}} \Bw + \tBuL$ which satisfies
\begin{equation*}
\tBW(\cdot,t) = \BW(\cdot, t) \,\,\,\hbox{for any}\,\,\,t\in [0,T[ 
\quad \hbox{and} \quad
\tBW(\cdot,t) = \0 \,\,\,\hbox{for any}\,\,\,t\notin ]-2,2[. 
\end{equation*}
Furthermore, \eqref{es:tBuL} and the mixed derivative theorem yield
\begin{equation}\label{es:tW}
\|\tBW\|_{H^{1\slash 2}_p(\BBR; W^1_q(\dOm))} \lesssim 
\|\tBW\|_{W^{2,1}_{q,p}(\dOm \times \BBR)} \lesssim 
\|E_{_{(T)}} \Bw\|_{W^{2,1}_{q,p}(\dOm \times ]0,2T[)}+\|\tBuL\|_{W^{2,1}_{q,p}(\dOm \times \BBR)} \lesssim L.
\end{equation}
Thanks to \eqref{es:decayW}, $\tBW$ has the similar decay property due to \eqref{es:tW}
\begin{equation}\label{es:decaytW}
\|\nabla_\xi \tBW\|_{L_{1} (0,2T; L_\infty(G))} 
\lesssim  T^{1\slash{p'}}
\|\nabla_\xi \tBW\|_{L_{p} (0,2T; L_\infty(G))}  
\lesssim T^{1\slash{p'} + \sigma_{p,q}} L.
\end{equation}
\medskip

Now, keep $\tBW$ in mind and  introduce the following matrices
\begin{align*}
\tBBH_{_W} &:=  \nabla_\xi^{\top} \tBW \cdot E_{_{(T)}}(\BBI-\sA_{_W}^{\top})
             + E_{_{(T)}}(\BBI-\sA_{_W}) \cdot \nabla_\xi \tBW^{\top},\\
 \tBBD_{_W} &:=\BBD(\tBW) \cdot  E_{_{(T)}}(\BBI -\sA_{_W}).         
\end{align*}
Then one desired extension of the source term is given by,
\begin{align}\label{eq:tfwP}
\eta \tBf_{_{w,P}} &:= \rho_0 \ET \Bf\big( \BX_{_W}(\xi,t) ,t\big)
                   + (\eta-\rho_0) \ET \pa_t \BW
                 -\Di_\xi \big(\mu(\rho_0) (\tBBH_{_W} +\tBBD_{_W} ) \big)\\
    &\quad +\Di_\xi \big(\mu(\rho_0)\tBBH_{_W}\ET (\BBI-\sA_{_W})\big) 
            +\Di_\xi \Big(   \ET \big(\Theta (\BBI-\sA_{_W}) \big)\Big).\nonumber
\end{align}
Obviously, $\tBf_{w,P}|_{t\in ]0,T[} =\oBf_{w,P}$ is fulfilled. More importantly,  we shall prove that
\begin{equation}\label{es:tfwP}
\|\tBf_{w,P}\|_{L_{p,0,\ga_0}(\BBR;L_q(\dOm))} 
\lesssim  c L +T^{1\slash{p'} + \sigma_{p,q}}L^2 \big(\|\mu\|_{L_\infty(\dOm)}
 + T^{\sigma_{p,q}} \|\nabla \mu\|_{L_q(\dOm)} \big) 
  +T^{1\slash{p'}}L^2,
\end{equation}
where $\nabla \mu$ stands for $\nabla \big(\mu (\rho_0)\big)$ in short.
Furthermore, $\|\nabla \mu\|_{L_q(\dOm)}< \infty$ by $(\CH 3)$ and $\rho_0 \in \wh W^1_q(\dOm).$
\medskip 

To verify \eqref{es:tfwP}, first note that Condition \eqref{cdt:Tsmall1} (up to the choice of $C_N$) yields, 
\begin{equation*}
\Big\|\det \big(\BBI + \int_0^t \nabla \BW(\cdot,\tau) d\tau\big)\Big\|_{L_\infty(0,T;L_\infty(\dOm))} \leq 1\slash 2\, .
\end{equation*}
Thus we have for some $\gamma_0 >0,$
\begin{equation}\label{es:tfwP1}
\big\|\ET \Bf\big( \BX_{_W}(\xi,t) ,t\big)\big\|_{L_{p,0,\ga_0}(\BBR;L_q(\dOm))} 
\lesssim \|\Bf\|_{L_p(0,T;L_q(\dOm))} \lesssim cL.
\end{equation}
\smallbreak

Next, the second term on the right hand side of \eqref{eq:tfwP}  is easy bounded by  
\begin{equation}\label{es:tfwP2}
\|(\eta-\rho_0) \ET \pa_t \BW\|_{L_{p,0,\ga_0}(\BBR;L_q(\dOm))}
\lesssim \|\eta-\rho_0\|_{L_\infty(\dOm)} L \lesssim cL.
\end{equation}
\smallbreak

To study the nonlinear terms in \eqref{eq:tfwP}, we know from \eqref{ob:key} and \eqref{es:decaytW} that for any $1\leq j,k,\ell,m\leq N,$
\begin{equation}\label{es:HD1}
\big\|\big(\ET (\BBI-\sA_{_W})\big)^j_k 
(\nabla_\xi \tBW)^\ell_m\big\|_{L_p(0,2T;W^1_q(\dOm))}
\lesssim  T^{1\slash{p'} + \sigma_{p,q}}L^2,       
\end{equation}
\begin{equation}\label{es:HD2}
\big\|\big(\ET (\BBI-\sA_{_W})\big)^j_k 
(\nabla_\xi \tBW)^\ell_m\big\|_{L_p(0,2T;L_\infty(\dOm))}
\lesssim T^{1\slash{p'} + 2\sigma_{p,q}}L^2. 
\end{equation}
Then combining the bounds \eqref{es:HD1} and \eqref{es:HD2}, we obtain that for any $\gamma_0 >0,$
\begin{equation}\label{es:tHtD1}
\|\mu(\rho_0)(\tBBH_{_W}, \tBBD_{_W})\|_{L_{p,0,\ga_0}(\BBR;W^1_q(\dOm))} 
\lesssim T^{1\slash{p'} + \sigma_{p,q}}L^2
  \big(\|\mu\|_{L_\infty(\dOm)} + T^{\sigma_{p,q}} \|\nabla \mu\|_{L_q(\dOm)} \big).
\end{equation}
Moreover, compared with \eqref{es:tHtD1}, $\mu(\rho_0) \tBBH_{_W} \cdot E_{_{(T)}}(\BBI - \sA_{_W})$ is higher order term for short time,
\begin{equation}\label{es:tHtD2}
\|\mu(\rho_0) \tBBH_{_W} \cdot E_{_{(T)}}(\BBI - \sA_{_W})\|_{L_{p,0,\ga_0}(\BBR;W^1_q(\dOm))} 
        \lesssim T^{2\slash{p'} + 2\sigma_{p,q}}L^3
                     \big(\|\mu\|_{L_\infty(\dOm)} + T^{\sigma_{p,q}} \|\nabla \mu\|_{L_q(\dOm)} \big). 
\end{equation}
Hence we can bound the nonlinear terms  by \eqref{cdt:Tsmall1}, \eqref{es:tHtD1} and \eqref{es:tHtD2},
\begin{multline}\label{es:tfwP3}
\|\Di_\xi \big(\mu(\rho_0) (\tBBH_{_W} +\tBBD_{_W} ) \big)
  -\Di_\xi \big(\mu(\rho_0)\ET (\BBI-\sA_{_W}^{\top})\tBBH_{_W}\big)\|_{L_{p,0,\ga_0}(\BBR;L_q(\dOm))}\\
\lesssim T^{1\slash{p'} + \sigma_{p,q}}L^2 \big(\|\mu\|_{L_\infty(\dOm)} 
  + T^{\sigma_{p,q}} \|\nabla \mu\|_{L_q(\dOm)} \big).
\end{multline}
The bound of last pressure term of $\eta \tBf_{w,P}$ is immediate from \eqref{ob:key},
\begin{equation}\label{es:tfwP4}
\Big\|\Di_\xi \Big( \ET \big(\Theta(\BBI-\sA_{_W})\big)\Big) \Big\|_{L_{p,0,\ga_0}(\BBR;L_q(\dOm))}
\lesssim T^{1\slash{p'}}L^2.
\end{equation} 
At last, putting together the bounds \eqref{es:tfwP1}, \eqref{es:tfwP2}, \eqref{es:tfwP3} and \eqref{es:tfwP4} completes the proof of our claim \eqref{es:tfwP}.
\subsubsection*{\underline{Bound of $\tg_w$}}
Based on the expression
\begin{equation*}
\og_w =  \nabla_{\xi}^{\top} \BW:  (\BBI - \sA^{\top}_{_W} ) 
 = \di_\xi \oBR_w =\di_\xi  \big( (\BBI - \sA^{\top}_{_W} ) \BW \big),
\end{equation*}
let us consider the following extension
\begin{equation}\label{eq:tgwRw}
\tg_w := \ET \big( \nabla_{\xi}^{\top}\tBW: (\BBI -  \sA^{\top}_{_{W}})\big)
 = \di \Big(  E_{_{(T)}}\big((\BBI-\sA^{\top}_{_{W}}) \tBW \big) \Big)
 =: \di \tBR_w.
\end{equation}
By \eqref{eq:tgwRw}, we immediately know that $\tg_w (\cdot,t) = \og_w(\cdot, t)$ for $t \in [0,T[$ and vanishes for $t \notin ]0,2T[.$
To prove such $\tg_w$ is an admissible extension, it is sufficient to verify 
for some $\ga_0 >1,$ 
\begin{equation}\label{spc:tgw}
\tg_w \in  L_{p,0,\ga_0}\big(\BBR; W^1_q(\dOm) \cap \BW^{-1}_q (\Om)\big) \cap H^{1\slash 2}_{p,0,\ga_0}\big( \BBR; L_q(\dOm)\big).
\end{equation}
In fact, the first part of \eqref{spc:tgw} is guaranteed by \eqref{es:HD1},
\begin{equation}\label{es:gw1}
\|\tg_w\|_{L_{p,0,\ga_0}(\BBR;W^1_q(\dOm))}  
\lesssim \|\tg_w\|_{L_p(0,2T;W^1_q(\dOm))} 
\lesssim  T^{1\slash{p'} + \sigma_{p,q}}L^2.
\end{equation}
\smallbreak

Next, to prove
$\tg_w \in  L_{p,0,\ga_0}\big(\BBR; \BW^{-1}_q (\Om)\big),$ 
it is sufficient to verify 
\begin{equation*}
(\tg_w, \varphi)_{\Omega} =(\di \tBR_w, \varphi)_{\Omega}= - (\tBR_w, \nabla \varphi)_{\Omega},
\,\,\, \forall \,\, \varphi \in W^1_{q',\Gamma_+}(\Omega).
\end{equation*}
Note that 
$\tBR_w =\big( E_{_{(T)}}(\BBI-\sA^{\top}_{_{W}}) \big) \tBW =\0 
  \,\,\,\hbox{on}\,\,\, \Gamma_-$ as 
$\tBW |_{\Gamma_-} =\0.$ 
On the other hand, we claim
\begin{equation}\label{eq:jumptRw}
\jump{\tBR_w \cdot \Bn}
 = \jump{\big( E_{_{(T)}}(\BBI-\sA^{\top}_{_{W}}) \big) \tBW \cdot \Bn}=0 
  \,\,\,\hbox{on}\,\,\, \Gamma,
\end{equation}
which yields our desired result.To complete the proof of \eqref{eq:jumptRw},
assume only $t \in ]0,T]$ without loss of generality.
By the continuity of $\jump{\tBW}$ across $\Gamma$ and  \cite[Remark 3.1]{DeniSol1995}, we see that
\begin{equation*}
\jump{\sA^{\top}_{_{W}}\BW \cdot \Bn} = \BW \cdot  \jump{\sA_{_W}\Bn}=0 
  \,\,\,\hbox{on}\,\,\, \Gamma, \quad \forall\, t\in ]0,T].
\end{equation*}
\smallbreak

Finally, it remains to check
$\tg_w \in H^{1\slash 2}_{p,0,\ga_0} \big(\BBR_+;L_q(\dOm)\big).$ 
However, for technical reason, we divide the proof into two cases:
\begin{equation*}
(I) :=\{(p,q)\in ]2,\infty[\times ]N,\infty[\}
\quad \hbox{and}\quad 
(II) :=\{(p,q)\in ]1,2]\times ]N,\infty[\,:1 \slash p + N \slash q >3\slash 2\}.
\end{equation*}
For the case (I), we have the embedding
\begin{equation}\label{eq:embed_1}
\CD^{2-2\slash p}_{q,p}(\dOm) \hookrightarrow W^1_q(\dOm) \hookrightarrow L_\infty(\dOm).
\end{equation}
Then Lemma \ref{lem:Hhalf}, \eqref{ob:key} and \eqref{es:tW} imply that
\begin{align}\label{es:gw21}
\|\tg_w\|_{H^{1\slash 2}_{p,0,\ga_0} (\BBR;L_q(\dOm))} 
 & \lesssim \|\BBI-\sA^{\top}_{_{W}}\|_{L_\infty(\dOm\times ]0,T[)}^{1\slash 2} 
\Big( \| \BBI-\sA^{\top}_{_{W}}\|_{L_\infty(0,T;W^1_q(\dOm))}
 \\ \nonumber
 & + T^{(q-N)\slash (pq)} \|\pa_t   \sA^{\top}_{_{W}}\|_{L_\infty(0,T;L_q(\dOm))}^{1-N\slash (2q)}
  \|\pa_t \sA^{\top}_{_{W}}\|_{L_p(0,T;H^1_q(\dOm))}^{N\slash (2q)}
\Big)^{1\slash 2}
 \| \nabla_{\xi}^{\top} \tBW \|_{H^{1\slash 2, 1\slash 2}_{q,p}(\dOm\times \BBR)}
 \\ \nonumber
& \lesssim  T^{ (1\slash {p'} +\sigma_{p,q})\slash 2} L^{3\slash 2} 
\Big( T^{1\slash {p'}} L + T^{(q-N)\slash (pq)}
   \|\BW\|_{L_\infty(0,T;W^1_q(\dOm))}^{1-N\slash (2q)} L^{N\slash (2q)}  
\Big)^{1\slash 2} 
\\ \nonumber
&\lesssim T^{(1-N\slash (pq)+\sigma_{p,q})\slash 2} L^2.
\end{align}
\smallbreak

On the other hand, if $(p,q) \in (II),$  the comments below Proposition \ref{prop_uDu} yield that,
\begin{equation}\label{es:DtAW}
\|\pa_t \sA_{_W}\|_{L_{p \slash \theta}(0,T;L_\beta(\dOm))}
\lesssim \|\nabla_\xi \BW\|_{L_{p \slash \theta}(0,T;L_\beta(\dOm))} \lesssim L
\end{equation}
for some $\theta$ and $\beta$ satisfying $1-2(1-\theta)\slash p = N\slash q -N\slash \beta.$
Thus we can infer from Lemma \ref{lem:Hhalf2} and \eqref{ob:key} that 
\begin{align}\label{es:gw22}
\|\tg_w\|_{H^{1\slash 2}_{p,0,\ga_0} (\BBR;L_q(\dOm))} 
 & \lesssim 
\|\BBI-\sA^{\top}_{_{W}}\|^{1\slash 2}_{L_\infty(\dOm\times ]0,T[)} 
\Big( \| \BBI-\sA^{\top}_{_{W}}\|_{L_\infty(0,T;W^1_q(\dOm))}
\\ \nonumber
&\quad + \|\pa_t  \sA_{_W}^{\top}\|_{L_{p \slash \theta}(0,T;L_\beta(\dOm))}
\Big)^{1\slash 2}
 \| \nabla_{\xi}^{\top} \tBW \|_{H^{1\slash 2 ,1\slash 2}_{q,p}(\dOm\times \BBR)}
\lesssim  T^{ (1\slash {p'} +\sigma_{p,q})\slash 2} L^{2}.
\end{align}

Finally, combining the estimates \eqref{es:gw1}, \eqref{es:gw21} and \eqref{es:gw22} furnishes that
\begin{equation}\label{es:gw}
\|\tg_w\|_{ H^{1,1\slash 2}_{q,p,0,\ga_0} (\dOm \times \BBR)} 
\lesssim (T^{1\slash {p'} + \sigma_{p,q}} + T^{s_{p,q}}) L^2
 \lesssim  T^{s_{p,q}} L^2,
\end{equation}
where the index $s_{p,q}$ is given by
\begin{equation*}
s_{p,q}:= 
\begin{cases}
 \big(1-N\slash (pq)+\sigma_{p,q} \big)\slash 2 
                               & \hbox{for}\quad (p,q) \in (I),\\
 (1\slash {p'} +\sigma_{p,q})\slash 2
                         &  \hbox{for} \quad (p,q) \in (II).
  \end{cases}
\end{equation*}

\subsubsection*{\underline{Bound of $\tBR_w$}}
Recall the definition
$\tBR_w= \big( E_{_{(T)}}(\BBI-\sA^{\top}_{_{W}}) \big) \tBW$ in \eqref{eq:tgwRw}. Obviously $\tBR_w(\cdot,t) = \oBR_w(\cdot,t)$ for $t\in ]0,T[.$ 
Besides, employing Lemma \ref{lem:A_DA} and \eqref{ob:key} implies that 
\begin{equation}\label{es:DtRw1}
\|\pa_t \tBR_{_W}\|_{L_{p,0,\ga_0}(\BBR;L_q(\dOm))} 
 \lesssim \|\tW^{j} \pa_{\xi_{_\ell}} E_{_{(T)}}W^k\|_{L_p(0,2T;L_q(\dOm))}  +T^{1\slash {p'} + \sigma_{p,q}}L^2 .
\end{equation}
So to handle the nonlinear term $\tW^{j} \pa_{\xi_{_\ell}} E_{_{(T)}}W^k,$
let us first consider the case where $N \slash q + 2 \slash p \leq 1.$ 
In this situation, we have the embedding \eqref{eq:embed_1}  and thus
\begin{equation}\label{es:DtRw2}
\|\tW^{j} \pa_{\xi_{_\ell}} E_{_{(T)}}W^k\|_{L_p(0,2T;L_q(\dOm))} 
\lesssim T^{1\slash p}  \|\tBW\|_{L_\infty(\dOm \times ]0,2T[)} \|\nabla_\xi \BW\|_{L_\infty(0,T; L_q(\dOm))}
\lesssim T^{1\slash p} L^2.
\end{equation}
\smallbreak

On the other hand, we infer from \eqref{es:decayW} by assuming $N \slash q + 2 \slash p > 1,$ 
\begin{equation}\label{es:DtRw3}
\|\tW^{j} \pa_{\xi_{_\ell}} E_{_{(T)}}W^k\|_{L_p(0,T;L_q(\dOm))} 
\lesssim \|\nabla_\xi \BW\|_{L_p(0,T; L_\infty(\dOm))}  \|\tBW\|_{L_\infty(0,2T; L_q(\dOm))} 
\lesssim T^{\sigma_{p,q}}L^2.
\end{equation}
At last, inserting \eqref{es:DtRw2} and \eqref{es:DtRw3} into \eqref{es:DtRw1} yields for all $(p,q) \in ]1,\infty[\times ]N,\infty[,$
\begin{equation}\label{es:DtRw}
\|\pa_t \tBR_{_W}\|_{L_{p,0,\ga_0}(\BBR;L_q(\dOm))}  
 \lesssim  T^{\tsigma_{p,q}} L^2 \quad 
 \hbox{with}\,\,\,
  \tsigma_{p,q} := \min \big\{ 1\slash p, (1-N\slash q)\slash 2 \big\} >0. \\
\end{equation}

\subsubsection*{\underline{Bound of $\tBh_{w,P}$}}
Recall the symbol $\BBH_{_W} = \BBD(\BW) - \BBD_{_W}(\BW)$ and write out
\begin{equation*}
\oBh_{w,P} = \mu(\rho_0) \BBH_{_W} \Bn + \mu(\rho_0)\BBH_{_W} (\oBn_{_W}- \Bn)
           - \mu(\rho_0) \BBD(\BW)(\oBn_{_W}- \Bn) + \Theta\,(\oBn_{_W}- \Bn)
\end{equation*} 
Then this motivates us to introduce that
\begin{equation*}
\tBh_{w,P} := \mu(\rho_0) \tBBH_{_W} \Bn + \mu(\rho_0)\tBBH_{_W} \ET(\oBn_{_W}- \Bn)
           - \mu(\rho_0) \BBD(\tBW)\ET(\oBn_{_W}- \Bn) + \Pi\,\ET(\oBn_{_W}- \Bn).
\end{equation*}
Clearly $\jump{\tBh_{w,P}(\cdot, t) } = \jump{\oBh_{w,P}(\cdot, t)}$ 
on $\Gamma$ for any $t\in ]0,T[.$ 
In fact, we will see that
\begin{equation} \label{spc:thwP}
\tBh_{w,P} \in L_{p,0,\ga_0}\big(\BBR; W^1_q(\dOm)^{N}\big) \cap 
                H^{1\slash 2}_{p,0,\ga_0}\big(\BBR; L_q(\dOm)^{N}\big)                 
                \quad
                \hbox{for some}\,\,\,\ga_0 >1,
\end{equation}
and thus $\tBh_{w,P}$  is exactly one desired extension.
\medskip

So let us study the claim \eqref{spc:thwP}.
Firstly, the inequalities \eqref{es:HD1}, \eqref{es:HD2} and Lemma \ref{lem:normal}  imply that 
\begin{equation}\label{es:thwP11}
\|\mu(\rho_0) (\tBBH_{_W}, \tBBH_{_W} \Bn)  \|_{L_{p,0,\ga_0}(\BBR;W^1_q(\dOm))}
\lesssim T^{1\slash {p'}+ \sigma_{p,q}}L^2
  \big(\|\mu\|_{L_\infty(\dOm)} + T^{\sigma_{p,q}} \|\nabla \mu\|_{L_q(\dOm)} \big).
\end{equation}
Now thanks to \eqref{ob:key},  \eqref{es:thwP11} and \eqref{es:decaytW}  yield respectively,
\begin{equation}\label{es:thwP12}
\|\mu(\rho_0)\tBBH_{_W} \ET(\oBn_{_W}- \Bn)\|_{L_{p,0,\ga_0}(\BBR;W^1_q(\dOm))}
\lesssim T^{2\slash {p'} + 2\sigma_{p,q}}L^3
  \big(\|\mu\|_{L_\infty(\dOm)} + T^{\sigma_{p,q}} \|\nabla \mu\|_{L_q(\dOm)} \big),
\end{equation}
\begin{equation}\label{es:thwP13}
\|\mu(\rho_0) \BBD(\tBW)\ET(\oBn_{_W}- \Bn)\|_{L_{p,0,\ga_0}(\BBR;W^1_q(\dOm))}
\lesssim T^{1\slash {p'} + \sigma_{p,q}}L^2
  \big(\|\mu\|_{L_\infty(\dOm)} + T^{\sigma_{p,q}} \|\nabla \mu\|_{L_q(\dOm)} \big).
\end{equation}
The estimate of the pressure term in $\tBh_{w,P}$ is immediate from \eqref{ob:key},
\begin{equation}\label{es:thwP14}
\| \Pi\,\ET(\oBn_{_W}- \Bn)\|_{L_{p,0,\ga_0}(\BBR;W^1_q(\dOm))} 
\lesssim T^{1\slash {p'}} L^2. 
\end{equation}
Therefore, the bounds \eqref{es:thwP11}, \eqref{es:thwP12}, \eqref{es:thwP13}, \eqref{es:thwP14} and the condition \eqref{cdt:Tsmall1} imply that
\begin{equation}\label{es:thwP1}
\|  \tBh_{w,P}  \|_{L_{p,0,\ga_0}(\BBR; W^1_q(\dOm))}
         \lesssim T^{1\slash {p'} + \sigma_{p,q}}L^2
                     \big(\|\mu\|_{L_\infty(\dOm)} + T^{\sigma_{p,q}} \|\nabla \mu\|_{L_q(\dOm)} \big) +T^{1\slash {p'}} L^2 ,
\end{equation}
which completes the proof of the first part of \eqref{spc:thwP}.
\medskip

To verify the rest property in \eqref{spc:thwP}, we first recall the proof of \eqref{es:gw} and gain that
\begin{equation}\label{es:tHhalf}
\|\tBBH_{_W}\|_{H^{1,1\slash 2}_{q,p,0,\ga_0} (\dOm\times\BBR)}
 \lesssim T^{s_{p,q}}L^2.
\end{equation}
Then the bound \eqref{es:HD2} and Lemma \ref{lem:normal} imply that
\begin{equation}\label{es:thwP21}
\|\mu(\rho_0)(\tBBH_{_W},\tBBH_{_W}\Bn) \|_{H^{1,1\slash 2}_{q,p,0,\ga_0} (\dOm\times\BBR)} 
\lesssim 
\|\mu\|_{L_\infty(\dOm)} T^{s_{p,q}}L^2 +\|\nabla \mu\|_{L_q(\dOm)}T^{1\slash {p'} + 2\sigma_{p,q}}L^2.
\end{equation}
Now,  thanks to Lemma \ref{lem:normal} and \eqref{es:DtAW}, note that
\begin{equation*}
\|\pa_t(\oBn_{_W}- \Bn) \|_{L_{p \slash \theta}(0,T;L_\beta(\dOm))}
\lesssim \|\pa_t\sA_{_W} \|_{L_{p \slash \theta}(0,T;L_\beta(\dOm))} \lesssim L.
\end{equation*}
Thus by means of \eqref{ob:key}, we can produce similar arguments as \eqref{es:gw21} and \eqref{es:gw22} with replacing $\sA_{_W}-\BBI$ by $\oBn_{_W}- \Bn.$ Indeed, the estimates below hold true,
\begin{equation}\label{es:thwP22}
\|\mu(\rho_0)\tBBH_{_W} \ET(\oBn_{_W}- \Bn)\|_{H^{1\slash 2}_{p,0,\ga_0} (\BBR;L_q(\dOm))} 
\lesssim
\|\mu\|_{L_\infty(\dOm)} T^{2s_{p,q}}L^3 +\|\nabla \mu\|_{L_q(\dOm)}T^{1\slash {p'} + 2\sigma_{p,q} + s_{p,q}}L^3.
\end{equation}
\begin{equation}\label{es:thwP23}
\|\mu(\rho_0) \BBD(\tBW)\ET(\oBn_{_W}- \Bn)\|_{H^{1\slash 2}_{p,0,\ga_0} (\BBR;L_q(\dOm))} 
\lesssim \|\mu\|_{L_\infty(\dOm)} T^{s_{p,q}}L^2.
\end{equation}
\begin{equation}\label{es:thwP24}
\|\Pi\,\ET(\oBn_{_W}- \Bn)\|_{H^{1\slash 2}_{p,0,\ga_0} (\BBR;L_q(\dOm))} 
\lesssim T^{s_{p,q}}L^2.
\end{equation}
Lastly, keeping the following convention in mind
\begin{equation}\label{cdt:Tsmall2}
 CT^{s_{p,q}} L \leq 1 \quad \hbox{for some constant}\,\,\,C,
\end{equation}
we infer from \eqref{es:thwP21}, \eqref{es:thwP22},  \eqref{es:thwP23} and \eqref{es:thwP24} that
\begin{equation}\label{es:thwP2}
\|\tBh_{w,P}\|_{H^{1\slash 2}_{p,0,\ga_0} (\BBR_+;L_q(\dOm))} 
\lesssim
(\|\mu\|_{L_\infty(\dOm)} +1) T^{s_{p,q}}L^2 +\|\nabla \mu\|_{L_q(\dOm)}T^{1\slash {p'} + 2\sigma_{p,q}}L^2.
\end{equation}

Hence combine the estimates \eqref{es:thwP1} and \eqref{es:thwP2} and we end up with
\begin{equation}\label{es:thwP}
\|\tBh_{w,P}\|_{H^{1,1\slash 2}_{q,p,0,\ga_0} (\dOm\times \BBR)} 
\lesssim
\|\mu\|_{L_\infty(\dOm)} T^{s_{p,q}}L^2 
+\|\nabla \mu\|_{L_q(\dOm)}T^{1\slash {p'} + 2\sigma_{p,q}}L^2
 + T^{\min\{s_{p,q} ,1\slash {p'}\}} L^2.
\end{equation}
\subsubsection*{ \underline{Bound of $\tBk_{w_+,P_+}$} }
Inspired by previous step, we introduce the following notations,
\begin{align*}
\tBBK_{_{W_+}} &:=  \nabla_{\xi}^{\top} \tBW_+ E_{_{(T)}}(\BBI-\sA_{_{W_+}}^{\top})
           +E_{_{(T)}}(\BBI-\sA_{_{W_+}}) \nabla_\xi \tBW_+^{\top},\\
\tBk_{w_+,P_+} &:= \mu(\rho_0) \tBBK_{_{W_+}} \Bn_+ + \mu(\rho_0)\tBBK_{_{W_+}} \ET(\oBn_{_{W_+}}- \Bn_+)\\
          & \hspace{0.5cm}- \mu(\rho_0) \BBD(\tBW_+)\ET(\oBn_{_{W_+}}- \Bn) 
           + \Pi_+\,\ET(\oBn_{_{W_+}}- \Bn_+). 
\end{align*}
In fact, $\tBk_{w_+,P_+}$ defined above is one desired extension by taking advantage of similar arguments for  $\tBh_{w,P}.$ Indeed, we can find
\begin{equation}\label{es:tkwP}
\|\tBk_{w_+,P_+}\|_{H^{1,1\slash 2}_{q,p,0,\ga_0} (\Omega_+\times \BBR)} 
\lesssim
\|\mu\|_{L_\infty(\dOm)} T^{s_{p,q}}L^2 
+\|\nabla \mu\|_{L_q(\dOm)}T^{1\slash {p'} + 2\sigma_{p,q}}L^2
 + T^{\min\{s_{p,q} ,1\slash {p'}\}} L^2.
\end{equation}
\medskip

Finally, putting the bounds \eqref{es:tfwP}, \eqref{es:gw},  \eqref{es:DtRw}, \eqref{es:thwP} and \eqref{es:tkwP} together yields
\begin{multline*}
\|(\tBf_{w,P}, \tg_w, \tBR_w,  \tBh_{w,P}, \tBk_{w_+,P_+})\|_{\CY_{p,q,\ga_0}} \lesssim \|\Bf\|_{L_p(0,T;L_q(\dOm))}
+ \|\eta-\rho_0\|_{L_\infty(\dOm \times ]0,T[)} L \\
+\|\mu\|_{L_\infty(\dOm)} T^{s_{p,q}}L^2 
+\|\nabla \mu\|_{L_q(\dOm)}T^{1\slash {p'} + 2\sigma_{p,q}}L^2
 + T^{\min\{\tsigma_{p,q}, s_{p,q} , 1\slash {p'} \}} L^2.
\end{multline*} 
Thus by taking $\eta-\rho_0$ and $T$ small enough as in \eqref{cdt:Tsmall1} and \eqref{cdt:Tsmall2}, we conclude
\begin{equation}\label{es:tUtQ}
\|\tBU\|_{W^{2,1}_{q,p,0,\gamma_0}  (\dOm \times \BBR)}
 + \|\nabla_\xi \tQ\|_{L_{p,0,\gamma_0}(\BBR; L_q(\dOm))}
 \leq cL + C_{p,q,\gamma_0} T^{\min\{\tsigma_{p,q}, s_{p,q} ,1\slash {p'} \}} L^2.
\end{equation}
Recall the definitions \eqref{eq:Xi} and \eqref{eq:Xi+} and we have
\begin{equation}\label{es:Xi}
\|\Xi\|_{H^{1,1\slash 2}_{q,p,0,\ga_0}(\dOm\times \BBR)} 
 +\|\Xi_+\|_{H^{1,1\slash 2}_{q,p,0,\ga_0}(\Omega_+\times \BBR)}
  \leq cL + C_{p,q,\gamma_0} T^{\min\{\tsigma_{p,q}, s_{p,q} ,1\slash {p'} \}} L^2.
\end{equation}
Therefore, according to \eqref{es:tUtQ} and \eqref{es:Xi}, 
$(\BU,Q,\Xi,\Xi_+) :=(\tBU|_{]0,T[},\tQ|_{]0,T[},\Xi,\Xi_+) $ belongs to  $\sE_{L}(T)$  for any
$(\Bw,P,\Pi, \Pi_+)$ in $\sE_{L}(T)$
as long as $T$ small.

\subsection{Stability of $\eqref{eq:UQ}$}\label{ssec:local_3}
In this part, we will show that the solution of \eqref{eq:UQ} is unique by contradiction arguments.
According to Section \ref{ssec:local_2}, suppose that $(\BU_k, Q_k),$ $k=1,2,$ are two distinct solutions of \eqref{eq:UQ} on $[0,T] \subset [0,1[$ such that 
\begin{equation*}
\|(\BU_1,\BU_2)\|_{W^{2,1}_{q,p}  (\dOm \times ]0,T[)}
 + \|\nabla (Q_1,Q_2)\|_{L_p(0,T; L_q(\dOm))} \leq L <\infty,
\end{equation*}
\begin{equation*}
\FL:=\|\delta \BU\|_{W^{2,1}_{q,p}  (\dOm \times ]0,T[)}
 + \|\nabla \delta Q\|_{L_p(0,T; L_q(\dOm))} >0
\end{equation*}
for $(\delta \BU, \delta Q) := (\BU_2-\BU_1,Q_2-Q_1).$ 
Thanks to \eqref{eq:UQ}, we can easily write the equations of $(\delta \BU, \delta Q)$ by
\begin{equation}\label{eq:delta_0}
	\left\{\begin{aligned}
 \pa_t \delta \BU - \eta^{-1}\Di_\xi \BBT(\delta\BU, \delta Q) = \delta \Bf ,\,\,\, \di_\xi \delta\BU = \delta g =\di_\xi \delta\BR 
		 &\quad\mbox{in}\quad \dOm \times ]0,T],  \\
		\jump{\BBT(\delta\BU, \delta Q)\Bn} = \jump{\delta\Bh }, \,\,\,\jump{\delta\BU}=\0 
		&\quad\mbox{on}\quad \Ga\times  ]0,T],  \\
		\BBT_+(\delta\BU_+, \delta Q_+)\Bn_+ = \delta\Bk  
		&\quad\mbox{on}\quad \Ga_{+}\times  ]0,T], \\
				\delta\BU_- = \0 &\quad\mbox{on}\quad \Ga_{-}\times  ]0,T], \\
	\delta\BU|_{t=0}= \0 &\quad\mbox{in}\quad \dOm,
	\end{aligned}\right.
\end{equation}
where 
$(\delta\Bf, \delta g, \delta\BR,  \delta\Bh, \delta\Bk)$ are given by
\begin{align*}
(\delta\Bf, \delta g, \delta\BR) &:= (\Bf_{u_2,\Fq_2} -\Bf_{u_1,\Fq_1}, g_{u_2}- g_{u_1}, \BR_{u_2}-\BR_{u_1}),\\
( \delta\Bh ,\delta\Bk)&:=(\Bh_{u_2,\Fq_2}-\Bh_{u_1,\Fq_1}, \Bk_{u_{2,+},\Fq_{2,+}} - \Bk_{u_{1,+},\Fq_{1,+}}), \\
(\Bu_k,\Fq_k)&:= (\BuL+ \BU_k, \FqL + Q_k).
\end{align*}
In fact, we claim that there exist suitable
$(\delta\tBf, \delta\tg, \delta\tBR,\delta\tBh,\delta\tBk ) \in \CY_{p,q,\ga_0}$ 
for some $\gamma_0 >0$ such that  
\begin{equation*}
(\delta\tBf, \delta\tg, \delta\tBR,\delta\tBh,\delta\tBk )|_{]0,T]} 
= (\delta\Bf, \delta g, \delta\BR,  \delta\Bh, \delta\Bk).
\end{equation*}
Then we can consider the following system,
\begin{equation}\label{eq:delta}
	\left\{\begin{aligned}
		 \pa_t \delta \tBU - \eta^{-1}\Di_\xi \BBT(\delta\tBU, \delta\tQ) = \delta \tBf ,\,\,\, \di_\xi \delta\tBU = \delta\tg =\di_\xi \delta\tBR 
		 &\quad\mbox{in}\quad \dOm \times \BBR_+,  \\
		\jump{\BBT(\delta\tBU, \delta\tQ)\Bn} = \jump{\delta\tBh }, \,\,\,\jump{\delta\tBU}=\0 &\quad\mbox{on}\quad \Ga\times \BBR_+,  \\
		\BBT_+(\delta\tBU_+, \delta\tQ_+)\Bn_+ = \delta\tBk  &\quad\mbox{on}\quad \Ga_{+}\times \BBR_+, \\
				\delta\tBU_- = \0 &\quad\mbox{on}\quad \Ga_{-}\times \BBR_+, \\
	\delta\tBU|_{t=0}= \0 &\quad\mbox{in}\quad \dOm.
	\end{aligned}\right.
\end{equation}
Thanks to Theorem \ref{thm:stokes},  there exists a unique solution $(\delta \tBU, \delta\tQ)$ of \eqref{eq:delta} such that 
\begin{equation*}
\FL = \|\delta \tBU\|_{W^{2,1}_{q,p}  (\dOm \times ]0,T[)}
 + \|\nabla \delta \tQ\|_{L_p(0,T; L_q(\dOm))} 
 \leq C_{p,q,\ga_0} \|(\delta\tBf, \delta\tg, \delta\tBR,\delta\tBh,\delta\tBk )\|_{\CY_{p,q,\ga_0}}.
\end{equation*}
On the other hand, the extensions $(\delta\tBf, \delta\tg, \delta\tBR,\delta\tBh,\delta\tBk)$ chosen before also fulfil 
\begin{equation}\label{claim:delta}
\|(\delta\tBf, \delta\tg, \delta\tBR,\delta\tBh,\delta\tBk )\|_{\CY_{p,q,\ga_0}} \leq (2C_{p,q,\ga_0})^{-1} \FL
\end{equation}
as long as $T$ small enough, which is a contradiction with $\FL >0.$ Therefore, the uniqueness of \eqref{eq:UQ} is valid.
\medskip

In order to prove the claim \eqref{claim:delta}, introduce some notations and a priori estimates for convenience.
So set that $\tBu_k:= \tBuL + \ET \BU_k$ for $k=1,2,$ and 
\begin{equation*}
(\delta\tBu, \delta \sA_{u}, \delta \oBn):=(\ET \delta \BU, \sA_{u_2} -\sA_{u_1}, \oBn_{u_2}-\oBn_{u_1}).
\end{equation*}
Besides, the facts below will be constantly used in the rest of this section,
\begin{equation}\label{es:decaydw}
\|\nabla_\xi \delta\BU\|_{L_{1} (0,T; L_\infty(G))} 
\lesssim  T^{1\slash {p'}}
\|\nabla_\xi \delta\BU\|_{L_{p} (0,T; L_\infty(G))}  
\lesssim T^{1\slash {p'}+ \sigma_{p,q}} \FL.
\end{equation}
Combining this decay property and Condition \eqref{cdt:Tsmall1}, we infer from Lemma \ref{lem:deltaA} and Lemma \ref{lem:normal} that
\begin{align}\label{ob:keydelta}
\|(\delta \sA_{u},\delta \oBn)\|_{L_\infty(\dOm\times]0,T[)} 
&\lesssim T^{1\slash {p'}+\sigma_{p,q}}\FL,&
\|\nabla_\xi (\delta \sA_{u},\delta \oBn)\|_{L_\infty(0,T;L_q(\dOm))}
&\lesssim T^{1\slash {p'}} \FL,\\ \nonumber
\|\pa_t (\delta \sA_{u},\delta \oBn)\|_{L_p(0,T;L_\infty(\dOm))}
 &\lesssim T^{\sigma_{p,q}}\FL,&
\|\pa_t \nabla_\xi (\delta \sA_{u},\delta \oBn)\|_{L_p(0,T;L_q(\dOm))}&\lesssim  \FL.
\end{align}

\subsubsection*{\underline{Bounds of $(\delta \tBf,\delta \tg,\delta \tBR)$}}
Recall the proof in Section \ref{ssec:local_2}, and we define 
\begin{align*}
\delta \tBBH := \tBBH_{u_2}-\tBBH_{u_1}&= \nabla_{\xi}^{\top} \ET \delta\BU \cdot\ET (\BBI-\sA_{u_2}^{\top}) 
              - \nabla_{\xi}^{\top}\tBu_1 \cdot \ET \delta \sA_{u}^{\top}\\
             & \quad + \ET (\BBI-\sA_{u_2})\cdot\nabla_\xi \ET \delta\BU^{\top}
             - \ET \delta \sA_{u} \cdot \nabla_\xi\tBu_1^{\top} ,\\
\delta \tBBD  := \tBBD_{u_2}-\tBBD_{u_1}
&=   \BBD(\ET \delta\BU) \cdot \ET (\BBI-\sA_{u_2} )
                  -\BBD(\tBu_1)\cdot  \ET \delta \sA_{u}.           
\end{align*}
Then it is natural to study 
\begin{align*}
\eta \,\delta\tBf &:= \rho_0 \ET \Big(\Bf\big( \BX_{u_2}(\xi,t) ,t\big) 
                        -\Bf\big( \BX_{u_1}(\xi,t) ,t\big)\Big)
                        + (\eta-\rho_0) \ET \pa_t \delta \BU\\
  &\quad  -\Di_\xi \big(\mu(\rho_0) (\delta\tBBH +\delta\tBBD) \big)
+\Di_\xi \Big(\mu(\rho_0) \big(\delta \tBBH \cdot  \ET (\BBI-\sA_{u_2})
       - \tBBH_{u_1} \cdot \ET \delta\sA_{u} \big) \Big) \\
 &\quad+\Di_\xi \Big(\ET \big( \delta Q (\BBI-\sA_{u_2})
       -  \Fq_1 \delta \sA_{u}   \big)  \Big),\\
       \delta\tg   &:= \nabla_{\xi}^{\top} E_{_{(T)}}\delta\BU:\ET (\BBI-\sA_{u_2}^{\top}) 
                -\nabla_{\xi}^{\top} \tBu_1: \ET \delta \sA_{u}^{\top},\\
\delta\tBR  &:=\ET (\BBI-\sA_{u_2}^{\top}) \,\ET \delta\BU 
                 - \ET \delta \sA_{u}^{\top}\,\tBu_1.
\end{align*}
According to \eqref{ob:keydelta} and \eqref{cdt:Tsmall1}, it is not hard to establish that
\begin{equation}\label{es:deltatf}
\|\delta\tBf\|_{L_{p,0,\ga_0}(\BBR;L_q(\dOm))} \lesssim
 c \FL +T^{1\slash {p'}+ \sigma_{p,q}}L \FL\big(\|\mu\|_{L_\infty(\dOm)}
 + T^{\sigma_{p,q}} \|\nabla \mu\|_{L_q(\dOm)} \big) 
  +T^{1\slash {p'}}L\FL,
\end{equation}
\begin{equation}\label{es:deltag}
\|\delta\tg\|_{H_{q,p,0,\ga_0}^{1,1\slash 2} (\dOm \times \BBR)}
 \lesssim T^{\min \{ 1\slash {p'} + \sigma_{p,q},s_{p,q} \}}L\FL
\lesssim T^{s_{p,q}}L\FL,
\end{equation}
\begin{equation}\label{es:deltaR}
\|\pa_t \delta \tBR\|_{L_{p,0,\ga_0}(\BBR;L_q(\dOm))}  
 \lesssim  T^{\tsigma_{p,q}} L \FL.
\end{equation}

\subsubsection*{\underline{Bounds of $(\delta \tBh, \delta \tBk)$}}
Thanks to the similar formulations of $\delta \tBh$ and $\delta \tBk,$ it is sufficient to study $\delta \tBh$ for simplicity.
Now we write out $\delta \tBh$  as below
\begin{align*}
\delta \tBh &= \mu(\rho_0)\,\delta\tBBH \,\Bn
               + \mu(\rho_0)\,\delta\tBBH \,\ET (\oBn_{u_2} -\Bn)
                       + \mu(\rho_0)\,\tBBH_{u_1} \,\ET\delta \oBn\\
& \quad -\mu(\rho_0) \,\BBD( \ET \delta \BU)\, \ET (\oBn_{u_2} -\Bn)
    -\mu(\rho_0)\, \BBD (\tBu_1) \,\ET \delta\oBn\\
& \quad -\delta \Pi\,\ET (\oBn_{u_2} -\Bn) + \Pi_1\, \ET \delta\oBn,   
\end{align*}
where $\delta \Pi := \Pi_2-\Pi_1$ and  
$$\Pi_k := \Big( \mu (\rho_0) \big(\BBD(\tBu_k)-\wt\BBH_{u_k} \big) \oBn_{u_k}\Big) \cdot \oBn_{u_k} 
\,\,\,\hbox{for}\,\, k=1,2.$$
Then we claim that
\begin{equation}\label{es:deltath}
\|\delta \tBh\|_{H^{1,1\slash 2}_{q,p,0,\ga_0}(\dOm\times\BBR)}\lesssim 
\big( \|\mu\|_{L_\infty(\dOm)} T^{s_{p,q}}L 
+\|\nabla \mu\|_{L_q(\dOm)}T^{s_{p,q}+ \sigma_{p,q}}L
 + T^{\min\{s_{p,q}, 1\slash {p'}\}} L \big)\FL.
\end{equation}
Based on our previous discussions and Lemma \ref{lem:normal}, it is not hard to show \eqref{es:deltath}. 
For instance, let us only consider the $H^{1\slash 2}_{p,0,\gamma_0}\big(\BBR;L_q(\dOm)\big)$ bound of 
$\Pi_1 \,\ET\delta \oBn$ for $(p,q) \in (II).$  According to Lemma \ref{lem:deltaA},\eqref{es:decaydw} and \eqref{cdt:Tsmall1}, we have
\begin{equation}\label{es:DtdeltaAW}
\|\pa_t \delta \sA_{_W}\|_{L_{p \slash \theta}(0,T;L_\beta(\dOm))}
\lesssim \FL
\end{equation}
with some $\theta$ and $\beta$ fulfilling $1-2(1-\theta)\slash p = N\slash q -N\slash \beta.$
Thus Lemma \ref{lem:normal}, \eqref{es:DtdeltaAW}, \eqref{es:DtAW} and \eqref{ob:keydelta} imply that
\begin{equation}\label{es:Dtdeltatn}
\|\pa_t \delta \oBn\|_{L_{p \slash \theta}(0,T;L_\beta(\dOm))}
\lesssim \FL + T^{1\slash {p'}+\sigma_{p,q}}L\FL \lesssim \FL.
\end{equation}
On the other hand, recall \eqref{es:thwP21} and we can infer from the conditions \eqref{cdt:Tsmall1} and \eqref{cdt:Tsmall2},
\begin{equation*}
\big\|\mu (\rho_0) \big(\BBD(\tBu_k)-\wt\BBH_{u_k} \big) \big\|_{H^{1,1\slash 2}_{q,p}(\dOm \times \BBR)} \lesssim L (\|\mu\|_{L_\infty(\dOm)} +T^{\sigma_{p,q}} \|\nabla \mu\|_{L_q(\dOm)}),
\end{equation*}
which immediately yields that
\begin{equation}\label{es:Pik_2}
\big\|\Pi_k \big\|_{H^{1,1\slash 2}_{q,p}(\dOm \times \BBR)} \lesssim L (\|\mu\|_{L_\infty(\dOm)} +T^{\sigma_{p,q}} \| \nabla \mu\|_{L_q(\dOm)}).
\end{equation}
Thus we can conclude the desired bound of $\Pi_1 \,\ET\delta \oBn$  from \eqref{ob:keydelta},\eqref{es:Dtdeltatn}, \eqref{es:Pik_2} and Lemma \ref{lem:Hhalf2},
\begin{equation*}
\|\Pi_1 \,\ET\delta \oBn\|_{H^{1\slash 2}_{p,0,\ga_0} (\BBR;L_q(\dOm))} \lesssim T^{s_{p,q}}L\FL 
 (\|\mu\|_{L_\infty(\dOm)} +T^{\sigma_{p,q}} \|\nabla \mu\|_{L_q(\dOm)})
 \,\,\,\hbox{for}\,\, (p,q) \in (II).
\end{equation*}
\medskip

Finally, combining the estimates \eqref{es:deltatf},  \eqref{es:deltag}, \eqref{es:deltaR} and \eqref{es:deltath}  implies that
\begin{align*}
\|(\delta\tBf, \delta\tg, \delta\tBR,  \delta\tBh, \delta\tBk)\|_{\CY_{p,q,\ga_0}} &\lesssim c \FL 
+\|\mu\|_{L_\infty(\dOm)} T^{s_{p,q}}L\FL
+\|\nabla \mu\|_{L_q(\dOm)}T^{s_{p,q} + \sigma_{p,q}}L\FL \\
& \quad + T^{\min\{\tsigma_{p,q}, s_{p,q}, 1\slash {p'} \}} L\FL. \hspace{3cm}
\end{align*}
Therefore \eqref{claim:delta} holds true by taking $c$ and $T$ small.

\section{Some exponential stability in the case of $(\Omega_1)$}\label{sec:decay}
In this section, we shall address some exponential decay property of two phase Stokes system under the physical setting $(\Omega_1).$ So  consider \eqref{eq:S} with assuming $\Gamma_- = \emptyset,$
\begin{equation}\label{eq:S2}
	\left\{\begin{aligned}
		 \pa_t\Bu -\eta^{-1} \Di \BBT(\Bu,\Fq) =  \Bf,\,\,\,
		\di\Bu = g=\di \BR    
		&\quad\mbox{in}\quad \dOm \times \BBR_+,  \\
		\jump{\BBT(\Bu,\Fq)\Bn} = \jump{\Bh}, \,\,\,\jump{\Bu}=\0 &\quad\mbox{on}\quad \Ga\times \BBR_+,  \\
		\BBT_+(\Bu_+,\Fq_+)\Bn_+ = \Bk &\quad\mbox{on}\quad \Ga_{+}\times \BBR_+, \\
	\Bu|_{t=0}= \Bu_0 &\quad\mbox{in}\quad \dOm.
	\end{aligned}\right.
\end{equation}
To study \eqref{eq:S2}, we introduce following functional spaces for convenience.
\begin{itemize}
\item  
Recall the rigid motion space $\CR_d$ and its basis $\FP$ used in Theorem \ref{thm:main_global}.
Then for any Banach space $E(\dOm)$ defined over $\dOm,$ 
\begin{equation*}
\tE (\dOm) := \{\Bu \in E(\dOm):  (\eta \Bu, \Bp_\alpha)_{\dOm} =0
\,\,\hbox{for any}\,\, \Bp_\alpha \in \FP
\,\,\hbox{and}\,\, 1 \leq \alpha \leq M\},
\end{equation*}
with the norm $\|\cdot\|_{\tE (\dOm)} := \|\cdot\|_{E (\dOm)}.$ For instance, the symbols $\wt{J}_q(\dOm)$ and $\wt{\CD}_{q,p}^{2-2\slash p}(\dOm)$ stand for the subspaces of $J_q(\dOm)$ and $\CD_{q,p}^{2-2\slash p}(\dOm)$ without rigid motion respectively.

\item In addition, we say $(\Bf,g,\BR,\Bh,\Bk) \in \CZ_{p,q,\ep}$ for some $1<p,q<\infty$ and $\ep >0,$ if $\Bf,g,\BR,\Bh$ and $\Bk$ satisfy the conditions,  
\begin{gather*}
e^{\ep t} \Bf \in L_{p,0}\big(\BBR; L_q(\dOm)\big)^N, \,\,
e^{\ep t} g \in H_{q,p,0}^{1,1\slash 2}(\dOm \times \BBR) \cap L_{p,0}\big(\BBR;\BW^{-1}_q(\Omega)\big), \,\,\\
e^{\ep t} (\pa_t\BR, \BR) \in L_{p,0}\big(\BBR; L_q(\dOm)\big)^{2N},\,
e^{\ep t} \Bh \in H_{q,p,0}^{1,1\slash 2}(\dOm \times \BBR)^N
\,\,\,\hbox{and}\,\,\,
e^{\ep t} \Bk \in H_{q,p,0}^{1,1\slash 2}(\Omega_+ \times \BBR)^N.
\end{gather*} 
Moreover, the norm $\|\cdot\|_{\CZ_{p,q,\ep}}$ is given by
\begin{align*}
\|(\Bf,g,\BR,\Bh,\Bk)\|_{\CZ_{p,q,\ep}}& :=
\|e^{\ep t} (\Bf, \BR, \pa_t \BR)\|_{L_p(\BBR_+; L_q(\dOm))} 
+\|e^{\ep t} (g, \Bh)\|_{H_{q,p}^{1,1\slash 2}(\dOm \times \BBR)} \\
& \quad +\|e^{\ep t} \Bk\|_{H_{q,p}^{1,1\slash 2}(\Omega_+ \times \BBR)}.
\end{align*}
\end{itemize}
With above symbols, our main results in this part on the decay properties in the framework of the $L_p-L_q$ maximal regularity  read as follows.
\begin{theo}\label{thm:decay_S2}
Assume that $1<p,q<\infty,$ $N<r<\infty$ and $r \geq \max\{q,q\slash (q-1)\}.$ 
Suppose that $\mu$ satisfies $(\CH 3')$ and $\Omega = \dOm \cup \Gamma$ be a bounded domain with the closed hypersurfaces $\Gamma,$ $\Gamma_+$ being of class $W^{2-1\slash r}_r.$
Let $\eta:= \eta_+ \mathds{1}_{\Omega_+}+\eta_- \mathds{1}_{\Omega_-}$ for some $\eta_\pm >0,$
$\Bu_0 \in \wt{\CD}_{q,p}^{2-2\slash p}(\dOm)$ and $(\Bf, g, \BR, \Bh, \Bk) \in \CZ_{p,q,\ep}$ for some $\ep>0.$
Then \eqref{eq:S2} admits a unique solution $(\Bu, \Fq)$ with 
\begin{equation*}
\Bu \in W^{2,1}_{q,p}(\dOm \times \BBR_+) 
\,\,\,\hbox{and}\,\,\,
\Fq \in L_p\big(\BBR_+; W^1_q(\dOm) + \widehat{W}^1_{q,\Gamma_+}(\Omega)\big).
\end{equation*}
Moreover, there exist constants $C$ and $\ep_0 \, (\leq \ep)$ such that
\begin{multline*}
\|e^{\ep_0 t} (\pa_t \Bu, \Bu, \nabla \Bu, \nabla^2 \Bu)\|_{L_p(0,T; L_q(\dOm))} 
+\|e^{\ep_0 t} \Fq\|_{L_p(0,T;W^1_q(\dOm))}
 \leq C \Big( \|\Bu_0\|_{\wt{\CD}_{q,p}^{2-2\slash p}(\dOm)} \\
 + \|(\Bf,g,\BR,\Bh,\Bk)\|_{\CZ_{p,q,\ep_0}}
 +  \sum_{\alpha=1}^M  \Big(  \int_0^T e^{p\ep_0t} |(\eta \Bu, \Bp_\alpha)_{\dOm}|^p dt   \Big)^{1\slash p} \Big).
\end{multline*}
for any $T>0.$
\end{theo}

\begin{rema}\label{rmk:othogonal}
Let us give some simple but useful comment on the orthogonal condition in Theorem \ref{thm:decay_S2}.
Assume that $\Bv$ is a smooth vector field in $\Omega$ such that $\jump{\Bv} = \0$ on $\Gamma.$ Then we have
\begin{equation}\label{eq:tensor_v}
\big(\Di \BBT(\Bu,\Fq), \Bv \big)_{\dOm} 
=\big(\jump{\BBT\big(\Bu,\Fq \big)\Bn}, \Bv \big)_{\Gamma} 
+ \big( \BBT_+\big(\Bu_+,\Fq_+ \big)\Bn_+, \Bv \big)_{\Gamma_+} 
- \frac{1}{2} \sum_{i,j=1}^N \int_{\dot{\Omega}} \BBT(\Bu, \Fq)^i_j \BBD(\Bv)^i_j \,dx.
\end{equation}
For any $\Bp_\alpha \in \FP,$  note the fact that $\BBD(\Bp_\alpha)^i_j =0.$
Then by multiplying $\eta \Bp_\alpha$ on the both sides of $\eqref{eq:S2}_1$ and integrating over $\dOm,$
we infer from \eqref{eq:tensor_v} that,
\begin{align*}
\pa_t (\eta \Bu, \Bp_\alpha)_{\dOm} 
 =(\jump{\Bh}, \Bp_\alpha)_{\Gamma}
    + (\Bk, \Bp_\alpha)_{\Gamma_+}
     + (\eta \Bf, \Bp_\alpha)_{\dOm},
\end{align*}
In particular, as long as 
$(\eta \Bu_0, \Bp_\alpha)_{\dOm} =
(\jump{\Bh}, \Bp_\alpha)_{\Gamma}  + (\Bk, \Bp_\alpha)_{\Gamma_+}
     + (\eta \Bf, \Bp_\alpha)_{\dOm} =0,$
the solution $\Bu$ of \eqref{eq:S2} satisfies $(\eta \Bu, \Bp_\alpha)_{\dOm} = 0$ for any $\alpha=1,...,M.$
\end{rema}

To prove Theorem \ref{thm:decay_S2}, we suppose that the solution of \eqref{eq:S2} can be decomposed by 
$$(\Bu,\Fq) = (\BuL, \Fq_{_L}) + (\Bw, P),$$
with $(\BuL, \Fq_{_L}) $ and  $(\Bw, P)$ solving the following systems respectively,
\begin{equation}\label{eq:S2_1}
	\left\{\begin{aligned}
	\pa_t\BuL  -  	\eta^{-1}\Di \BBT(\BuL,\Fq_{_L}) = \0, \,\,\,\di\BuL = 0   &\quad\mbox{in}\quad \dOm \times \BBR_+,  \\
		\jump{\BBT(\BuL,\Fq_{_L})\Bn} = \0, \,\,\,\jump{\BuL}=\0 
		&\quad\mbox{on}\quad \Ga \times \BBR_+,  \\
		\BBT(\BuL,\FqL)\Bn_+ = \0 
		&\quad\mbox{on}\quad \Ga_{+}\times \BBR_+, \\
	\BuL|_{t=0}= \Bu_0 &\quad\mbox{in}\quad \dOm,
	\end{aligned}\right.
\end{equation}
\begin{equation}\label{eq:S2_2}
	\left\{\begin{aligned}
		\pa_t\Bw -\eta^{-1} \Di \BBT(\Bw,P) = \Bf,  \,\,\, \di\Bw = g=\di \BR
		 &\quad\mbox{in}\quad \dOm \times \BBR_+,  \\
		\jump{\BBT(\Bw, P)\Bn} = \jump{\Bh}, \,\,\,\jump{\Bw}=\0 
		&\quad\mbox{on}\quad \Ga\times \BBR_+,  \\
		\BBT_+(\Bw,P)\Bn_+ = \Bk &\quad\mbox{on}\quad \Ga_{+}\times \BBR_+, \\
	\Bw|_{t=0}= \0 &\quad\mbox{in}\quad \dOm.
	\end{aligned}\right.
\end{equation}
In the rest of this section, we will treat \eqref{eq:S2_1} and \eqref{eq:S2_2} separately.

\subsection{Analysis of \eqref{eq:S2_1}}

\begin{theo}\label{thm:decay_S1}
Assume that $1<p,q<\infty,$ $N<r<\infty$ and $r \geq \max\{q,q\slash (q-1)\}.$ 
Suppose that $\mu$ satisfies $(\CH 3')$ and $\Omega = \dOm \cup \Gamma$ be a bounded domain with the closed hypersurfaces $\Gamma,$ $\Gamma_+$ being of class $W^{2-1\slash r}_r.$
Let $\eta:= \eta_+ \mathds{1}_{\Omega_+}+\eta_- \mathds{1}_{\Omega_-}$ for any fixed $\eta_\pm >0$ and
$\Bu_0 \in \wt{\CD}_{q,p}^{2-2\slash p}(\dOm).$
Then \eqref{eq:S2_1} admits a unique solution $(\Bu_{_L}, \Fq_{_L})$ with 
\begin{equation*}
\Bu_{_L} \in \wt W^{2,1}_{q,p}(\dOm \times \BBR_+) 
\,\,\,\hbox{and}\,\,\,
\Fq_{_L} \in L_p\big(\BBR_+; W^1_q(\dOm) + \widehat{W}^1_{q,\Gamma_+}(\Omega)\big).
\end{equation*}
Moreover, there exists positive constants $\ep_0$ and $C$ such that
\begin{equation*}
\|e^{\ep_0 t} (\pa_t \Bu_{_L}, \Bu_{_L} , \nabla \Bu_{_L} , \nabla^2 \Bu_{_L})\|_{L_p(\BBR_+; L_q(\dOm))} 
+\|e^{\ep_0 t} \Fq_{_L} \|_{L_p(\BBR_+;W^1_q(\dOm))}
\leq C  \|\Bu_0\|_{\wt{\CD}_{q,p}^{2-2\slash p}(\dOm)}.
\end{equation*}
\end{theo}

Thanks to \cite[Theorem 2.7] {MarSai2017} by taking $\Gamma_- =\emptyset,$ the Stokes operator $\CA_q$ generates some analytic $\CC_0$ semigroup
 $\big\{e^{-\CA_q t}\big\}_{t \geq 0}$ in $J_q(\dOm).$ 
Moreover, for some $0<\ep< \pi\slash 2 $ and $\lambda_0 >0,$ 
we have
\begin{equation*}
\Sigma_{\ep,\lambda_0} := \{ \lambda \in \BBC : |\arg \lambda | \leq \pi-\ep , |\lambda| \geq \lambda_0 \} \subset \,\, \hbox{the resolvent set}\,\, \rho(\CA_q).
\end{equation*}
On the other hand, as a direct consequce of Remark \ref{rmk:othogonal},  the closed subspace $\wt{J}_q(\dOm)$ is $e^{-\CA_q t}-$invariant,
 i.e. $e^{-\CA_q t} \wt{J}_q(\dOm) \subset \wt{J}_q(\dOm)$ 
 for any $t \geq 0.$
Therefore, if we set the restriction operator $\wt{\CA}_q  := \CA_q |_{\wt{J}_q(\dOm)}$  with its domain given by
$\CD(\wt{\CA}_q) := \CD(\CA_q) \cap \wt{J}_q(\dOm),$
then $\wt{\CA}_q $ is the generator of the induced $\CC_0-$semigroup 
$\big\{e^{-\wt{\CA}_q t}\big\}_{t \geq 0}:= 
\big\{e^{-\CA_q t}|_{\wt{J}_q(\dOm)}\big\}_{t \geq 0}.$
\medskip

Now, let us study the resolvent points of two phase Stokes operator. Thus we consider
\begin{equation}\label{eq:resolvent}
	\left\{\begin{aligned}
		\lambda \Bu - \eta^{-1}\Di \BBT\big(\Bu,K(\Bu)\big) = \Bf  
		&\quad\mbox{in}\quad \dOm ,  \\
		\jump{\BBT\big(\Bu,K(\Bu)\big)\Bn} = \0, \,\,\,\jump{\Bu}=\0 &\quad\mbox{in}\quad \Ga,  \\
		\BBT_+\big(\Bu_+,K(\Bu)_+\big)\Bn_+ = \0 &\quad\mbox{on}\quad \Ga_{+}.\\
	\end{aligned}\right.
\end{equation}
By the similar arguments as in Remark \ref{rmk:othogonal}, we have 
\begin{equation*}
\lambda(\eta \Bu, \Bp_\alpha)_{\dOm} = (\eta \Bf, \Bp_\alpha)_{\dOm}, 
\,\,\, \hbox{for any}\,\,\, \Bp_\alpha \in \FP,
\end{equation*}
which yields the $R(\lambda, \CA_q)$-invariance of subspace 
$\tJ_q(\dOm)$ for any $\lambda \in \Sigma_{\ep,\lambda_0},$ 
and thus $\Sigma_{\ep,\lambda_0} \subset \rho(\tCA_q).$ 
Therefore, combining above discussions yields the following property.
\begin{prop} \label{prop:analytic_wt}
Let $1<q<\infty,$ $N < r <\infty,$ and $\max \{ q, q'\}\leq r$ with $q':= q \slash (q-1).$ Suppose that $\mu$ satisfies $(\CH 3')$ and $\Omega$ is a bounded domain and $\Gamma, \Gamma_+$ are closed hypersurface of $W^{2-1\slash r}_r$ class.
Assume that $\eta:= \eta_+ \mathds{1}_{\Omega_+}+\eta_- \mathds{1}_{\Omega_-}$ for any fixed $\eta_\pm >0.$
Then the induced  Stokes operator $\tCA_q:= \CA_q |_{\tJ_q (\dOm)}$ generates a $\CC_0-$semigroup $\big\{e^{-\tCA_q t} \big\}_{t \geq 0}$ on $\tJ_q(\dOm),$ which is analytic.
\end{prop}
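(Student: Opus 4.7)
The plan is to deduce Proposition \ref{prop:analytic_wt} directly from the analyticity of $\big\{e^{-\CA_q t}\big\}_{t\geq 0}$ on $J_q(\dOm)$, established in \cite[Theorem~2.7]{MarSai2017}, by invoking the general principle that the restriction of an analytic $\CC_0$-semigroup to a closed invariant subspace is again an analytic $\CC_0$-semigroup, with generator obtained by restriction. All the ingredients are essentially in place in the preceding discussion; the proof just has to assemble them.

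First I would observe that $\tJ_q(\dOm)$ is a \emph{closed} subspace of $J_q(\dOm)$, being the intersection of the kernels of the finitely many continuous linear functionals $\Bu \mapsto (\eta\, \Bu,\Bp_\alpha)_{\dOm}$ with $\Bp_\alpha\in\FP$. Next, I would verify the two invariance properties that make $\tCA_q$ a genuine generator on $\tJ_q(\dOm)$:
\begin{itemize}
\item \emph{Invariance of the domain:} for $\Bu\in\CD(\CA_q)\cap \tJ_q(\dOm)$ one has $\CA_q\Bu\in \tJ_q(\dOm)$. Indeed, by the identity \eqref{eq:tensor_v} of Remark~\ref{rmk:othogonal} applied to $\Bv=\Bp_\alpha$, together with the boundary/interface conditions encoded in $\CD(\CA_q)$ (namely $\jump{\Bu}=\0$, $\jump{\CT_{\Bn}(\mu\BBD(\Bu)\Bn)}=\0$ on $\Gamma$ and $\CT_{\Bn_+}(\mu\BBD(\Bu_+)\Bn_+)=\0$ on $\Gamma_+$) and the fact that $\BBD(\Bp_\alpha)=0$, the right-hand side of \eqref{eq:tensor_v} vanishes. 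Hence $(\eta\,\CA_q\Bu,\Bp_\alpha)_{\dOm}=0$.
\item \emph{Invariance under the resolvent:} for $\lambda\in \Sigma_{\ep,\lambda_0}$ and $\Bf\in \tJ_q(\dOm)$, setting $\Bu:=R(\lambda,\CA_q)\Bf$, the same argument applied to the resolvent equation \eqref{eq:resolvent} yields $\lambda(\eta\,\Bu,\Bp_\alpha)_{\dOm}=(\eta\,\Bf,\Bp_\alpha)_{\dOm}=0$, so $\Bu\in\tJ_q(\dOm)$.
\end{itemize}
Together these give $\Sigma_{\ep,\lambda_0}\subset\rho(\tCA_q)$ with $R(\lambda,\tCA_q)=R(\lambda,\CA_q)\big|_{\tJ_q(\dOm)}$.

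Finally I would transfer the sectorial resolvent estimate: for $\Bf\in\tJ_q(\dOm)$,
\begin{equation*}
\|R(\lambda,\tCA_q)\Bf\|_{\tJ_q(\dOm)} = \|R(\lambda,\CA_q)\Bf\|_{J_q(\dOm)} \leq \frac{C}{|\lambda|}\,\|\Bf\|_{J_q(\dOm)} = \frac{C}{|\lambda|}\,\|\Bf\|_{\tJ_q(\dOm)},
\end{equation*}
on the sector $\Sigma_{\ep,\lambda_0}$, which is precisely the sectorial bound characterising generators of bounded analytic semigroups. Since density of $\CD(\tCA_q)=\CD(\CA_q)\cap\tJ_q(\dOm)$ in $\tJ_q(\dOm)$ is inherited from the density in $J_q(\dOm)$ together with the fact that $\tJ_q(\dOm)$ is the range of a bounded projection commuting with $\CA_q$ (the subtraction of the finite-dimensional $\eta$-orthogonal projection onto $\CR_d$), the Hille--Yosida/sectorial characterisation applies and we conclude that $\tCA_q$ generates the analytic $\CC_0$-semigroup $\big\{e^{-\tCA_q t}\big\}_{t\geq 0}=\big\{e^{-\CA_q t}|_{\tJ_q(\dOm)}\big\}_{t\geq 0}$. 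I expect no genuine obstacle here; the only point requiring care is the verification that the interface and boundary terms in \eqref{eq:tensor_v} truly vanish when tested against $\Bp_\alpha\in\CR_d$ under the \emph{full} set of conditions in $\CD(\CA_q)$, which relies on decomposing $\mu\BBD(\Bu)\Bn$ into its normal and tangential parts (the tangential part being absorbed in the traction condition, and the normal part combining with $K(\Bu)$ to give zero traction against $\Bp_\alpha$ after integration by parts).
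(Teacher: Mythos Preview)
Your proposal is correct and follows essentially the same route as the paper. The paper first invokes Remark~\ref{rmk:othogonal} to obtain $e^{-\CA_q t}$-invariance of $\tJ_q(\dOm)$ (hence a restricted $\CC_0$-semigroup with generator $\tCA_q$), and then uses the resolvent identity $\lambda(\eta\Bu,\Bp_\alpha)_{\dOm}=(\eta\Bf,\Bp_\alpha)_{\dOm}$ to get $\Sigma_{\ep,\lambda_0}\subset\rho(\tCA_q)$ and conclude analyticity; you instead work directly at the resolvent/sectorial level, but the substance---invariance of $\tJ_q(\dOm)$ deduced from testing against $\Bp_\alpha$ and using $\BBD(\Bp_\alpha)=0$---is identical, and your closing remark about decomposing $\mu\BBD(\Bu)\Bn$ into normal and tangential parts correctly identifies why the boundary terms in \eqref{eq:tensor_v} vanish for $\Bu\in\CD(\CA_q)$.
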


As $\dOm$ is bounded, we can show that $0 \in \rho (\tCA_q).$ To this end, let us start with the following property. 
\begin{lemm}\label{lem:unique_wt}
Let $q, \eta,$ $\mu$ and $\dot\Omega$ fulfil the same assumptions in Proposition \ref{prop:analytic_wt}. 
Suppose that $\Bu \in W^2_q(\dot \Omega)^N \cap \wt J_q(\dOm)$ satisfies 
\eqref{eq:resolvent} with $\Bf=0$ for some $\lambda \in\BBC \backslash\, ]-\infty,0[.$ Then $\Bu =\0.$ 
\end{lemm}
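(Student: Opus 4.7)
The plan is to derive an energy identity by multiplying the homogeneous resolvent equation \eqref{eq:resolvent} by $\eta\overline{\Bu}$ and integrating over $\dOm$, then exploit the sign of $\lambda$ together with the orthogonality built into $\wt J_q(\dOm)$ to conclude $\Bu=0$. First, setting $\Fq:=K(\Bu)$ and performing phase-wise integration by parts over $\Omega_\pm$, the boundary contribution on $\Gamma_+$ vanishes by the free-boundary condition $\BBT_+(\Bu_+,\Fq_+)\Bn_+=0$, while the two contributions from $\Gamma$ reassemble, using $\jump{\Bu}=\0$, into $\int_{\Gamma}\jump{\BBT(\Bu,\Fq)\Bn}\cdot\overline{\Bu}\,dS$, which vanishes by the transmission condition. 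The remaining bulk term becomes $\tfrac12\int_{\dOm}\mu(\rho_0)|\BBD(\Bu)|^2\,dx - \int_{\dOm}\Fq\,\overline{\di\Bu}\,dx$, and the pressure part drops out because $\Bu\in J_q(\dOm)$ together with $\Bu\in W^2_q(\dOm)$ forces $\di\Bu=0$ pointwise. The outcome is the clean identity
\begin{equation*}
\lambda\int_{\dOm}\eta\,|\Bu|^2\,dx + \frac{1}{2}\int_{\dOm}\mu(\rho_0)\,|\BBD(\Bu)|^2\,dx = 0.
\end{equation*}

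Next I would separate real and imaginary parts. If $\Im\lambda\neq 0$, the imaginary part yields $\Im\lambda\cdot\|\sqrt{\eta}\,\Bu\|_{L_2(\dOm)}^2=0$, hence $\Bu=\0$ since $\eta>0$; if $\lambda>0$ is real, both summands on the real side are nonnegative and vanish individually, again giving $\Bu=\0$. These two subcases already exhaust all $\lambda\in\BBC\setminus (\,]-\infty,0]\,)$.

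The hard part will be the borderline case $\lambda=0$, which is included in the hypothesis $\lambda\in\BBC\setminus ]-\infty,0[$. Here the identity yields only $\BBD(\Bu)=0$ on $\dOm$, so $\Bu|_{\Omega_\pm}$ is a rigid motion on each bulk separately. The continuity $\jump{\Bu}=\0$ along the $(N{-}1)$-dimensional hypersurface $\Gamma$ forces the two candidate rigid motions to agree on a set that is not contained in any affine hyperplane, so they coincide globally and $\Bu\in\CR_d$. Writing $\Bu=\sum_{\alpha=1}^{M} c_\alpha\Bp_\alpha$ in the basis $\FP$ and taking the $\eta$-weighted inner product against $\Bp_\beta$, the orthonormality $(\eta\,\Bp_\alpha,\Bp_\beta)_{\dOm}=\delta^\alpha_\beta$ combined with the constraint $(\eta\,\Bu,\Bp_\beta)_{\dOm}=0$ encoded in $\Bu\in\wt J_q(\dOm)$ forces $c_\beta=0$ for every $\beta$, and hence $\Bu=\0$.

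A small technicality concerns justifying the vanishing of $\int_{\dOm}\Fq\,\overline{\di\Bu}\,dx$ when $\Fq=K(\Bu)\in W^1_q(\dOm)+\wh W^1_{q,\Gamma_+}(\Omega)$: the $W^1_q$-component is paired directly against $\di\Bu=0$, while the homogeneous $\wh W^1_{q,\Gamma_+}$-component is handled via the weak defining relation of $J_q(\dOm)$, approximating by admissible test functions in $\wh W^1_{q',\Gamma_+}(\Omega)$. Using $\eta\overline{\Bu}$ rather than $\overline{\Bu}$ as the test function is essential: it matches the $\eta$-weighted orthogonality defining $\wt J_q$ and makes the rigid-motion argument at $\lambda=0$ immediate.
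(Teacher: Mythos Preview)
Your argument follows the same energy-identity route as the paper's proof, and the case analysis ($\Im\lambda\neq 0$, $\lambda>0$, $\lambda=0$ with the rigid-motion step) matches exactly. There is, however, one step the paper makes that you skip: before testing against $\eta\overline{\Bu}$, the paper first reduces to $q=2$ by rewriting the equation as
\[
(2\lambda_0+|\lambda|)\,\Bu-\eta^{-1}\Di\BBT\big(\Bu,K(\Bu)\big)=(2\lambda_0+|\lambda|-\lambda)\,\Bu
\]
and bootstrapping via the resolvent estimates of \cite[Theorem~1.6]{MarSai2017}. This is not cosmetic: your identity requires $\int_{\dOm}\eta|\Bu|^2$ and $\int_{\dOm}\mu|\BBD(\Bu)|^2$ to be finite, i.e.\ $\Bu\in W^1_2(\dOm)$, and the hypothesis $\Bu\in W^2_q(\dOm)$ alone does not give this for every admissible $q$. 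On a bounded domain the embedding $W^2_q\hookrightarrow W^1_2$ needs $q\geq 2N/(N+2)$, whereas Proposition~\ref{prop:analytic_wt} allows any $q$ with $\max\{q,q'\}\leq r$, so $q$ can lie strictly below this threshold. Once the reduction to $q=2$ is in place, your pressure-pairing discussion and the rest of the argument are exactly as in the paper.
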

\begin{proof}

Rewrite $\eqref{eq:resolvent}_1$ by
$(2\lambda_0+|\lambda|) \Bu - \eta^{-1}\Di \BBT\big(\Bu,K(\Bu)\big) = (2\lambda_0+ |\lambda| - \lambda)  \Bu. $
By the bootstrap arguments and the estimates for resolvent problem \eqref{eq:resolvent} (see \cite[Theorem 1.6]{MarSai2017}), it is sufficient to check the case $q=2$ for any finite $N.$

Now we take $\Bv \in W^{1}_{2}(\dot\Omega)^N \cap J_2(\dOm)$ such that $\jump{\Bv}=\0$ on $\Gamma.$ 
Then \eqref{eq:tensor_v} implies that 
\begin{align}\label{es:L2_tensor}
 \Big(\lambda \eta \Bu - \Di \BBT\big(\Bu, K(\Bu)\big), \Bv\Big)_{\dot{\Omega}}
 = \lambda(\eta\Bu ,\Bv)_{\dOm} +\frac{1}{2} \sum_{i,j=1}^N \int_{\dot{\Omega}} \mu \BBD(\Bu)^i_j \BBD(\Bv)^i_j \,dx =0.
\end{align}
Take $\Bv=\Bu$ in \eqref{es:L2_tensor},
\begin{equation*}
\Re \lambda \|\sqrt{\eta} \Bu \|_{L_2(\dOm)}^2 + \frac{1}{2} \|\sqrt{\mu} \BBD(\Bu) \|_{L_2(\dOm)}^2  + i\Im \lambda \|\sqrt{\eta} \Bu \|_{L_2(\dOm)}^2 =0.
\end{equation*}
Hence $\Bu =\0$ for $\Re \lambda >0$ or $\Im \lambda \not=0.$
Otherwise, if $\lambda =0,$
then $\Bu_\pm \in \CR_d.$ Noting that $\jump{\Bu} =\0,$ we have $\Bu \in \CR_d.$
Finally, the fact that $(\eta\Bu,\Bp_\alpha)=0$ yields $(\sqrt{\eta} \Bu,\sqrt{\eta} \Bu)_{\dOm}=0$ and thus $\Bu =\0.$
\end{proof}

By Lemma \ref{lem:unique_wt}, we can establish the following result.
\begin{prop}\label{prop:resolvent_wtA}
Let $q, \eta$ $\mu$ and $\dot\Omega$ fulfil the same assumptions in Proposition \ref{prop:analytic_wt} and $0<\ep < \pi \slash 2.$ 
Then the resolvent set $ \rho (\wt \CA_q)$ of $\wt\CA_q$ contains $\Sigma_{\ep} \cup \{0\}.$
\end{prop}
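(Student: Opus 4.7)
The plan is to combine the analyticity of Proposition~\ref{prop:analytic_wt} (which supplies $\Sigma_{\ep,\lambda_0} \subset \rho(\wt\CA_q)$ and hence at least one resolvent point), the compactness of Sobolev embeddings on the bounded domain $\dOm$, and the injectivity statement of Lemma~\ref{lem:unique_wt}, via a standard Fredholm alternative argument.

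First I would fix some $\lambda_1 \in \Sigma_{\ep,\lambda_0}$, so that by Proposition~\ref{prop:analytic_wt} the resolvent $R(\lambda_1, \wt\CA_q)$ is bounded from $\wt J_q(\dOm)$ into $\CD(\wt\CA_q) \subset W^2_q(\dOm)^N$. Since $\Omega_\pm$ are bounded domains of class $W^{2-1\slash r}_r$, the Rellich--Kondrachov theorem gives a compact embedding $W^2_q(\Omega_\pm) \hookrightarrow L_q(\Omega_\pm)$, and hence $R(\lambda_1, \wt\CA_q)$ is a compact operator on $\wt J_q(\dOm)$.

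For arbitrary $\lambda \in \BBC$ the identity
\begin{equation*}
\lambda - \wt\CA_q = (\lambda_1 - \wt\CA_q) \bigl[\,I - (\lambda_1 - \lambda)\, R(\lambda_1, \wt\CA_q)\,\bigr]
\end{equation*}
holds as an equality of operators $\CD(\wt\CA_q) \to \wt J_q(\dOm)$. Since $(\lambda_1 - \lambda) R(\lambda_1, \wt\CA_q)$ is compact on $\wt J_q(\dOm)$, the bracketed factor is a compact perturbation of the identity, hence Fredholm of index zero. Consequently $\lambda - \wt\CA_q$ is invertible if and only if it is injective, i.e., if and only if $\lambda$ is not an eigenvalue of $\wt\CA_q$. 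For any $\lambda \in \Sigma_\ep \cup \{0\}$ one has $\lambda \in \BBC \backslash\, ]-\infty,0[$, so Lemma~\ref{lem:unique_wt} applied with zero right-hand side rules out non-trivial solutions of \eqref{eq:resolvent} in $W^2_q(\dot\Omega)^N \cap \wt J_q(\dOm)$; thus $\lambda$ is not an eigenvalue and therefore lies in $\rho(\wt\CA_q)$.

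The one subtle point concerns $\lambda = 0$: there, the bulk injectivity argument of Lemma~\ref{lem:unique_wt} only forces $\Bu_\pm$ to be rigid motions, and it is precisely the orthogonality conditions $(\eta\Bu,\Bp_\alpha)_{\dOm}=0$ built into $\wt J_q(\dOm)$ that kill the resulting element of $\CR_d$. That is exactly where passing from $\CA_q$ to the restricted operator $\wt\CA_q$ becomes indispensable for including the origin in the resolvent set; no other serious obstacle appears.
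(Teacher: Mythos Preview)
Your argument is correct and essentially identical to the paper's: both fix a reference resolvent point (the paper takes $\lambda_1=2\lambda_0$), use Rellich to get compactness of the resolvent, and then apply the Fredholm alternative together with Lemma~\ref{lem:unique_wt} to rule out eigenvalues in $\Sigma_\ep\cup\{0\}$. Your factorization $\lambda-\wt\CA_q=(\lambda_1-\wt\CA_q)\bigl[I-(\lambda_1-\lambda)R(\lambda_1,\wt\CA_q)\bigr]$ is just an explicit unpacking of the paper's kernel computation for $(\lambda-2\lambda_0)^{-1}+R_0$.
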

\begin{proof}
By Proposition \ref{prop:analytic_wt},  there exists $\lambda_0>0$ such that $\Sigma_{\ep, \lambda_0} \subset \rho (\wt \CA_q)$ for $0 < \ep< \pi \slash 2.$ It remains to show our result for
$\lambda \in  (\Sigma_{\ep} \backslash \Sigma_{\ep, \lambda_0} ) \cup \{0\}.$
As $2\lambda_0 \in \Sigma_{\ep, \lambda_0},$ we denote 
$$R_0 := R(2\lambda_0;\wt \CA_q) : \wt J_q(\dOm) \rightarrow  W^2_q(\dOm) \cap \wt J_q(\dOm) \subset \wt J_q(\dOm).$$
$\dOm$ is bounded and then $R_0$ is compact operator from $\wt J_q(\dOm) $ into itself by Rellich's Theorem.
For any $\lambda \in  (\Sigma_{\ep} \backslash \Sigma_{\ep, \lambda_0} ) \cup \{0\},$  take 
$\Bg \in Ker \big( (\lambda -2\lambda_0)^{-1} + R_0 \big) \subset \wt J_q(\dOm),$ that is 
\begin{equation}\label{eq:Ker}
(\lambda- 2\lambda_0)^{-1} \Bg + R_0 \Bg =\0 \,\,\, \hbox{for any} 
\,\,\,\Bg \in \wt J_q (\dOm) .
\end{equation}
By definition of $R_0,$ $\Bu :=R_0 \Bg $ satisfies \eqref{eq:resolvent} with $\Bf =0.$ Thus Lemma \ref{lem:unique_wt} yields that $\Bu =R_0\Bg =0$ and then $\Bg=0$ by  \eqref{eq:Ker}. 
Thanks to the Fredholm Alternative Theorem, we can conclude the desired results. 
\end{proof}

In particular, Proposition \ref{prop:resolvent_wtA} implies that there exists $\ep_0 >0$ such that 
\begin{equation} \label{es:Anal_1}
 \|\Bu_{_L}(\cdot, t )\|_{\tJ_q (\dOm)} \leq C e^{-2\ep_0 t}
  \|\Bu_0\|_{\tJ_q(\dOm)},\,\,\,
\forall \Bu_0 \in \tJ_q(\dOm).
\end{equation}
Then we can prove Theorem \ref{thm:decay_S1}  by standard arguments in \cite[Theorem 3.9]{ShiShi2008}.

\subsection{Analysis of \eqref{eq:S2_2}}
This subsection is dedicated to the study of \eqref{eq:S2_2} and the result is summarized by the following theorem.
\begin{theo}\label{thm:decay_S22}
Assume that $1<p,q<\infty,$ $N<r<\infty$ and $r \geq \max\{q,q\slash (q-1)\}.$ 
Suppose that $\mu$ satisfies $(\CH 3')$ and $\Omega = \dOm \cup \Gamma$ be a bounded domain with the closed hypersurfaces $\Gamma,$ $\Gamma_+$ being of class $W^{2-1\slash r}_r.$
Let $\eta:= \eta_+ \mathds{1}_{\Omega_+}+\eta_- \mathds{1}_{\Omega_-}$ for any $\eta_\pm >0$ and $(\Bf, g, \BR, \Bh, \Bk) \in \CZ_{p,q,\ep_0}$ for the same constant $\ep_0$ in Theorem \ref{thm:decay_S1}.
Then \eqref{eq:S2_2} admits a unique solution $(\Bw, P)$ with 
\begin{equation*}
\Bw \in  W^{2,1}_{q,p}(\dOm \times \BBR_+) 
\,\,\,\hbox{and}\,\,\,
P \in L_p\big(\BBR_+; W^1_q(\dOm) + \widehat{W}^1_{q,\Gamma_+}(\Omega)\big).
\end{equation*}
Moreover, there exists a constant $C$ such that
\begin{multline*}
\|e^{\ep_0 t} (\pa_t \Bw, \Bw, \nabla \Bw, \nabla^2 \Bw)\|_{L_p(0,T; L_q(\dOm))} 
+\|e^{\ep_0 t} P\|_{L_p(0,T;W^1_q(\dOm))}\\
 \leq C  \|(\Bf,g,\BR,\Bh,\Bk)\|_{\CZ_{p,q,\ep_0}}
 + C \sum_{\alpha=1}^M  \Big(  \int_0^T e^{p\ep_0t} |(\eta\Bw, \Bp_\alpha)_{\dOm}|^p dt   \Big)^{1\slash p}
\end{multline*}
for any $T>0.$
\end{theo}

To prove Theorem \ref{thm:decay_S22}, recall the weak problem \eqref{def:welliptic} for any $\Bf \in L_q(\Omega)^N.$  Set that $Q_{q,\eta} \Bf := \eta^{-1}\nabla \theta$ and  $P_{q,\eta}\Bf:= \Bf -Q_{q,\eta} \Bf \in J_q(\dOm)$
with  $\theta \in \widehat{W}^1_{q,\Gamma_+}(\Omega)$ satisfying
\begin{equation*}
(\eta^{-1} \nabla \theta, \nabla \varphi)_{\dOm} = (\Bf,\nabla \varphi)_{\Omega}
\,\,\, \hbox{for all} \,\,\, \varphi \in \widehat{W}^1_{q',\Gamma_+}(\Omega).
\end{equation*}
Thanks to the definition of $ \widehat{W}^1_{q,\Gamma_+}(\Omega)$ and the Divergence Theorem, we have
\begin{equation}\label{eq:Qf}
(\eta Q_{q,\eta} \Bf, \Bp_\alpha)_{\dOm}=(\nabla \theta, \Bp_\alpha)_{\dOm} =0.
\end{equation}
Then \eqref{eq:Qf} implies that $(\eta P_{q,\eta}\Bf, \Bp_\alpha)_{\dOm}=0,$ as long as $(\eta \Bf, \Bp_\alpha)_{\dOm}= 0$ for any $\alpha=1,\dots,M.$ 
On the other hand, write $\wt \Bw:= \Bw - \sum_{\alpha=1}^M  (\eta \Bw, \Bp_\alpha)_{\dOm} \,\Bp_\alpha$ for any vector $ \Bw.$
Next we consider the following systems with above notations,
\begin{equation}\label{eq:S2_21}
	\left\{\begin{aligned}
		\pa_t\Bw_1 +2\lambda_0 \Bw_1- \eta^{-1}\Di \BBT(\Bw_1,P_1) =  \Bf,  \,\,\, \di\Bw_1 = g=\di \BR &\quad\mbox{in}\quad \dOm \times \BBR_+,  \\
		\jump{\BBT(\Bw_1, P_1)\Bn} = \jump{\Bh}, \,\,\,\jump{\Bw_1}=\0 
		&\quad\mbox{on}\quad \Ga\times \BBR_+,  \\
		\BBT_+(\Bw_1,P_1)\Bn_+ = \Bk &\quad\mbox{on}\quad \Ga_{+}\times \BBR_+, \\
	\Bw_1|_{t=0}= \0 &\quad\mbox{in}\quad \dOm,
	\end{aligned}\right.
\end{equation}
\begin{equation}\label{eq:S2_22}
	\left\{\begin{aligned}
		\pa_t\Bw_2 +2\lambda_0 \Bw_2-\eta^{-1} \Di \BBT(\Bw_2,P_2) =2\lambda_0 Q_{q,\eta} \wt \Bw_1,  \,\,\, \di\Bw_2 = 0 &\quad\mbox{in}\quad \dOm \times \BBR_+,  \\
		\jump{\BBT(\Bw_2, P_2)\Bn} = \jump{\Bw_2}=\0 
		&\quad\mbox{on}\quad \Ga\times \BBR_+,  \\
		\BBT_+(\Bw_2,P_2)\Bn_+ = \0 
		&\quad\mbox{on}\quad \Ga_{+}\times \BBR_+, \\
	\Bw_2|_{t=0}= \0 &\quad\mbox{in}\quad \dOm,
	\end{aligned}\right.
\end{equation}
\begin{equation}\label{eq:S2_23}
	\left\{\begin{aligned}
		\pa_t\Bw_3 - \eta^{-1}\Di \BBT(\Bw_3,P_3) =2\lambda_0  (P_{q,\eta}\wt \Bw_1 +\Bw_2),  \,\,\, \di\Bw_3 = 0
		 &\quad\mbox{in}\quad \dOm \times \BBR_+,  \\
		\jump{\BBT(\Bw_3, P_3)\Bn} =\jump{\Bw_3}=\0 
		&\quad\mbox{on}\quad \Ga\times \BBR_+,  \\
		\BBT_+(\Bw_3,P_3)\Bn_+ = \0 &\quad\mbox{on}\quad \Ga_{+}\times \BBR_+, \\
	\Bw_3|_{t=0}= \0 &\quad\mbox{in}\quad \dOm.
	\end{aligned}\right.
\end{equation}
Now set that 
$\Bw:= \sum_{\ell=1}^3 \Bw_\ell+\sum_{\alpha=1}^M \int_0^t 2\lambda_0 \big(\eta \Bw_1(s), \Bp_\alpha\big)_{\dOm}\, \Bp_\alpha ds $
and  $P:=  P_1+P_2+P_3.$    
Then the fact $\BBD(\Bp_\alpha) =\0$ implies that
$(\Bw, P)$ is (at lease formally) a solution of \eqref{eq:S2_2}.
Thus it is sufficient to construct the solutions of \eqref{eq:S2_21}-\eqref{eq:S2_23} one by one.

\subsubsection*{\underline{Study of \eqref{eq:S2_21}}}
The result on the solutions of \eqref{eq:S2_21} is given by the following proposition.
\begin{prop}\label{prop:decay_S2_21}
Let $\eta,$ $\mu$ and $\dot\Omega$ fulfil the same assumptions in Theorem \ref{thm:decay_S2}. 
Assume that $\Bf, g, \BR, \Bh, \Bk$ belong to $\CZ_{p,q,\ep_0}$ for some $0<\ep_0 (< \lambda_0).$
Then \eqref{eq:S2_21} admits a unique solution $(\Bw_1, P_1)$ with 
\begin{equation*}
\Bw_1 \in  W^{2,1}_{q,p}(\dOm \times \BBR_+) 
\,\,\,\hbox{and}\,\,\,
P_1 \in L_p\big(\BBR_+; W^1_q(\dOm) + \widehat{W}^1_{q,\Gamma_+}(\Omega)\big).
\end{equation*}
Moreover, there exists a constant $C$ such that
\begin{equation*}
\|e^{\ep_0 t} (\pa_t \Bw_1, \Bw_1, \nabla \Bw_1, \nabla^2 \Bw_1)\|_{L_p(\BBR_+; L_q(\dOm))} 
+\|e^{\ep_0 t} P_1\|_{L_p(\BBR_+;W^1_q(\dOm))}
\leq C\|(\Bf,g,\BR,\Bh,\Bk)\|_{\CZ_{p,q,\ep_0}}.
\end{equation*}
In addition, if $\Bf, \Bh$ and $\Bk$ satisfies 
 $(\jump{\Bh}, \Bp_\alpha)_{\Gamma}
    + (\Bk, \Bp_\alpha)_{\Gamma_+}
     + (\eta \Bf, \Bp_\alpha)_{\dOm} =0$
for any $\Bp_\alpha$ in $\FP,$ then we have $(\eta \, \Bw_1, \Bp_\alpha)_{\dOm}=0.$
\end{prop}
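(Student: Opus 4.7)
The strategy is to absorb the zero-order damping $+2\lambda_0 \Bw_1$ in \eqref{eq:S2_21} by an exponential change of time variable and then invoke Theorem~\ref{thm:stokes}(2) directly. Set $\Bv(t) := e^{2\lambda_0 t}\Bw_1(t)$ and $Q(t) := e^{2\lambda_0 t}P_1(t)$ on $\BBR_+$, extended by zero for $t<0$. Since $e^{2\lambda_0 t}(\partial_t\Bw_1 + 2\lambda_0 \Bw_1) = \partial_t \Bv$, the shift cancels and $(\Bv, Q)$ formally satisfies the un-shifted two-phase Stokes system
\begin{equation*}
\partial_t\Bv - \eta^{-1}\Di\,\BBT(\Bv,Q) = e^{2\lambda_0 t}\Bf,\qquad \di \Bv = e^{2\lambda_0 t}g = \di(e^{2\lambda_0 t}\BR),
\end{equation*}
with interface/boundary data $\jump{e^{2\lambda_0 t}\Bh}$, $e^{2\lambda_0 t}\Bk$ and zero initial value.

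Given $\ep_0 < \lambda_0$, set $\gamma := 2\lambda_0 - \ep_0$. Enlarging $\lambda_0$ from the outset if necessary (permissible since it only shrinks the complement of $\rho(\wt\CA_q)$ and preserves Theorem~\ref{thm:decay_S1}), we may assume $\gamma \geq \gamma_0$, the maximal-regularity threshold of Theorem~\ref{thm:stokes}. The identity $e^{-\gamma t}e^{2\lambda_0 t} = e^{\ep_0 t}$, together with $\partial_t(e^{2\lambda_0 t}\BR) = e^{2\lambda_0 t}(2\lambda_0 \BR + \partial_t \BR)$ and the shift property of the weighted Bessel-potential norm in time (multiplication by $e^{\pm a t}$ changes the weight by $\mp a$), shows that the $\CY_{p,q,\gamma}$-norm of $(e^{2\lambda_0 t}\Bf,\, e^{2\lambda_0 t}g,\, e^{2\lambda_0 t}\BR,\, e^{2\lambda_0 t}\Bh,\, e^{2\lambda_0 t}\Bk)$ is dominated by a constant multiple of $\|(\Bf,g,\BR,\Bh,\Bk)\|_{\CZ_{p,q,\ep_0}}$. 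Theorem~\ref{thm:stokes}(2) then produces a unique $(\Bv, Q) \in W^{2,1}_{q,p,0,\gamma}(\dOm\times\BBR) \times L_{p,0,\gamma}\big(\BBR; W^1_q(\dOm)+\widehat W^1_{q,\Gamma_+}(\Omega)\big)$ with the corresponding weighted estimate. Reversing the substitution via $e^{-\gamma t}\partial_t\Bv = e^{\ep_0 t}(\partial_t\Bw_1 + 2\lambda_0\Bw_1)$ and analogous identities for $\nabla^2\Bv$ and $\nabla Q$ yields the claimed bounds for $(\Bw_1,P_1)$; the control of $\|e^{\ep_0 t}\Bw_1\|_{L_p}$ itself follows from that of $\|e^{\ep_0 t}(\partial_t\Bw_1 + 2\lambda_0\Bw_1)\|_{L_p}$ via Duhamel's formula and the zero initial condition.

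For the orthogonality property, test $\eqref{eq:S2_21}_1$ against $\eta\Bp_\alpha$, use the identity \eqref{eq:tensor_v} together with $\BBD(\Bp_\alpha)=\0$, and apply the jump/Neumann-type boundary conditions to derive the scalar ODE
\begin{equation*}
\tfrac{d}{dt}(\eta\Bw_1,\Bp_\alpha)_{\dOm} + 2\lambda_0(\eta\Bw_1,\Bp_\alpha)_{\dOm} = (\jump{\Bh},\Bp_\alpha)_\Gamma + (\Bk,\Bp_\alpha)_{\Gamma_+} + (\eta\Bf,\Bp_\alpha)_{\dOm}.
\end{equation*}
Under the stated hypothesis the right-hand side vanishes, and the zero initial condition $(\eta\Bw_1(0),\Bp_\alpha)_{\dOm}=0$ forces $(\eta\Bw_1(t),\Bp_\alpha)_{\dOm}\equiv 0$ for all $t\geq 0$.

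The main obstacle is the clean transfer of the $H^{1/2}_{p,0}$-in-time component of the data norm under multiplication by $e^{2\lambda_0 t}$. Since the norm $\|\cdot\|_{\CY_{p,q,\gamma}}$ is defined precisely through $e^{-\gamma t}\langle D_t\rangle^{1/2}(\cdot)$, replacing $\gamma$ by $\ep_0$ simply absorbs $e^{2\lambda_0 t}$ into the weight; this is a routine bookkeeping step that requires no analysis beyond what is already built into Theorem~\ref{thm:stokes}.
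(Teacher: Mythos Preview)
Your approach is correct and matches the paper's: the paper defers the existence/estimate part to \cite[Lemma 4.6]{Sai2016} (which uses exactly your exponential substitution $\Bv = e^{2\lambda_0 t}\Bw_1$), and your orthogonality argument via the scalar ODE is identical to the one the paper writes out.

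One caveat on the point you flag as ``routine bookkeeping'': as written in this paper the norm in $\CY_{p,q,\gamma}$ places the weight \emph{outside} $\langle D_t\rangle^{1/2}$, i.e.\ $\|e^{-\gamma t}\langle D_t\rangle^{1/2}(\,\cdot\,)\|_{L_p}$, whereas the $\CZ_{p,q,\ep_0}$ assumption puts the weight \emph{inside}, i.e.\ $\|\langle D_t\rangle^{1/2}(e^{\ep_0 t}\,\cdot\,)\|_{L_p}$; these are not literally the same and your phrase ``simply absorbs $e^{2\lambda_0 t}$ into the weight'' elides this. The needed equivalence (for functions vanishing for $t<0$) is the standard fact that $e^{-\gamma t}\langle D_t\rangle^{1/2}e^{\gamma t} - \langle D_t\rangle^{1/2}$ is $L_p$-bounded, which is how the reference \cite{Sai2016} handles it; so your conclusion stands, but a sentence acknowledging this commutator step would close the gap.
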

\begin{proof}
The construction of the solution of \eqref{eq:S2_21}  is similar to \cite[Lemma 4.6]{Sai2016}. Here we only check 
the fact that $(\eta \Bw_1, \Bp_\alpha)_{\dOm}  =0$ for any $t>0.$
By the Remark \ref{rmk:othogonal}, we have for any $\Bp_\alpha \in \FP$ and any $t>0,$
\begin{equation*}
\frac{d}{dt} (\eta \Bw_1, \Bp_\alpha)_{\dOm} +2\lambda_0 (\eta \Bw_1, \Bp_\alpha)_{\dOm} =(\jump{\Bh}, \Bp_\alpha)_{\Gamma}
    + (\Bk, \Bp_\alpha)_{\Gamma_+}
     + (\eta \Bf, \Bp_\alpha)_{\dOm} =0.
\end{equation*}
Thus $\frac{d}{dt} \big(e^{2\lambda_0 t}(\eta \Bw_1, \Bp_\alpha)_{\dOm} \big)=0$ yields our desired result. 
\end{proof}

\subsubsection*{\underline{Study of \eqref{eq:S2_22}}}
By Proposition \ref{prop:decay_S2_21}, we can easily establish the solution of \eqref{eq:S2_22}. Indeed,
note that $(\eta \wt\Bw_1, \Bp_\alpha)_{\dOm}  =0$ by the definition of $\wt \Bw_1.$ 
Then 
$(\eta Q_{q,\eta}\wt \Bw_1, \Bp_\alpha)_{\dOm} = (\eta P_{q,\eta}\wt \Bw_1, \Bp_\alpha)_{\dOm} =0$ by our previous comments.
Thanks to Proposition \ref{prop:decay_S2_21}, \eqref{eq:S2_22} admit a unique solution 
\begin{equation*}
(\Bw_2,P_2) \in \wt W^{2,1}_{q,p}(\dOm \times \BBR_+) 
\times  L_p\big(\BBR_+; W^1_q(\dOm) + \widehat{W}^1_{q,\Gamma_+}(\Omega)\big).
\end{equation*}
Moreover, we can bound $(\Bw_2,P_2)$ by
\begin{multline} \label{es:w2_exp}
\|e^{\ep_0 t} (\pa_t \Bw_2, \Bw_2, \nabla \Bw_2, \nabla^2 \Bw_2)\|_{L_p(\BBR_+; L_q(\dOm))} 
+\|e^{\ep_0 t} P_2\|_{L_p(\BBR_+;W^1_q(\dOm))}\\ 
\lesssim\|2\lambda_0 e^{\ep_0 t} Q_{q,\eta} \wt \Bw_1 \|_{L_p(\BBR_+; L_q(\dOm))} 
\lesssim \|(\Bf,g,\BR,\Bh,\Bk)\|_{\wt\CZ_{p,q,\ep_0}}.
\end{multline}

\subsubsection*{\underline{Study of \eqref{eq:S2_23}}}  
With $\Bw_1$ and $\Bw_2$ at hand, let us check \eqref{eq:S2_23}.
As $\di \Bw_2 =0,$  $\jump{\Bw_2} =0$ on $\Gamma$ and  $(\eta P_{q,\eta}\Bw_1 ,\Bp_\alpha)_{\dOm}= ( \eta\Bw_2,\Bp_\alpha)_{\dOm}=0$ for any $\alpha =1,\dots,M,$  we have 
$\BW:=P_{q,\eta}\wt \Bw_1 +\Bw_2 \in \wt J_q(\dOm).$ 
Moreover, Proposition \ref{prop:decay_S2_21} and  \eqref{es:w2_exp} yield that 
\begin{equation}\label{es:W_w3}
\| e^{\ep_0 t} \BW \|_{L_p(\BBR_+; L_q(\dOm))} 
\lesssim \|(\Bf,g,\BR,\Bh,\Bk)\|_{\CZ_{p,q,\ep_0}}.
\end{equation}
On the other hand, we infer from the Duhamel principle that
\begin{equation*}
\Bw_{3} (\cdot, t)=2\lambda_0  \int_0^t  e^{(t-s)\CA_q}\BW  (\cdot,s) \,ds.
\end{equation*}
By \eqref{es:Anal_1} and above formula of $\Bw_3,$  we have 
\begin{equation*}
e^{\ep_0 t}\|\Bw_{3}(t) \|_{L_q(\dOm)} \leq 2 \lambda_0 C \int_0^t  e^{-\ep_0(t-s)} \big( e^{\ep_0 s}\|\BW(s)\|_{L_q(\dOm)} \big) \,ds,
\end{equation*}
which, combining Young's inequality and  \eqref{es:W_w3}, implies that 
\begin{equation*}
\|e^{\ep_0 t} \Bw_3\|_{L_p(\BBR_+; L_q(\dOm))}  \lesssim  \frac{\lambda_0}{\ep_0}
\|e^{\ep_0 t} \BW \|_{L_p(\BBR_+; L_q(\dOm))} 
\lesssim \|(\Bf,g,\BR,\Bh,\Bk)\|_{\CZ_{p,q,\ep_0}}.
\end{equation*}
Then applying Proposition \ref{prop:decay_S2_21} to 
\begin{equation*}
\pa_t\Bw_3 +2\lambda_0 \Bw_3- \eta^{-1}\Di \BBT(\Bw_3,P_3) =2\lambda_0  (P_{q,\eta}\Bw_1 +\Bw_2) +2\lambda_0 \Bw_3
\end{equation*}
furnishes the desired bounds of $\Bw_3$ and $P_3.$ That is, 
\begin{equation}\label{es:w3_exp}
\|e^{\ep_0 t} (\pa_t \Bw_3, \Bw_3, \nabla \Bw_3, \nabla^2 \Bw_3)\|_{L_p(\BBR_+; L_q(\dOm))} 
+\|e^{\ep_0 t} P_3\|_{L_p(\BBR_+;W^1_q(\dOm))}
\leq C\|(\Bf,g,\BR,\Bh,\Bk)\|_{\CZ_{p,q,\ep_0}}.
\end{equation}
\medskip

Finally, let us derive the estimates of $(\Bw, P).$ In fact, it is sufficient to study  
$$\Bw= \sum_{\ell=1}^3 \Bw_\ell+\sum_{\alpha=1}^M \int_0^t 2\lambda_0 \big(\eta \Bw_1(s), \Bp_\alpha\big)_{\dOm}\, \Bp_\alpha ds,$$ 
since the bound of $P=\sum_{\ell=1}^3 P_\ell$ is immediate from the above discussions.
As an consequence of the second Korn's inequality, we first note that
$$\|\Bw\|_{W^1_q(\Omega)} \leq C_{\Omega,q} \big( \|\BBD(\Bw)\|_{L_q(\Omega)} + \sum_{\alpha=1}^M |(\eta \Bw, \Bp_\alpha)_{\Omega}| \big)$$
 for any $C^1$ bounded domain $\Omega.$
Then Proposition \ref{prop:decay_S2_21}, \eqref{es:w2_exp}  and \eqref{es:w3_exp} imply that
\begin{equation}\label{es:w_low_exp}
\|e^{\ep_0 t} (\pa_t\Bw,\Bw,\nabla \Bw)\|_{L_p(0,T; L_q(\dOm))} 
 \leq C  \|(\Bf,g,\BR,\Bh,\Bk)\|_{\CZ_{p,q,\ep_0}}
 + C \sum_{\alpha=1}^M  \Big(  \int_0^T e^{p\ep_0t} |(\eta \Bw, \Bp_\alpha)_{\dOm}|^p dt   \Big)^{1\slash p}
\end{equation}
for any $T>0.$
Then apply Theorem 1.6 in \cite{MarSai2017} to \eqref{eq:S2_2} with $\lambda =2\lambda_0,$ and we obtain that
\begin{equation*}
\| \nabla^2 \Bw\|_{ L_q(\dOm)} 
\leq C\|(\Bf,g,\nabla g,\BR,\Bh,\nabla \Bh,\Bk,\nabla \Bk, \pa_t \Bw,\Bw)\|_{L_q(\dOm)} ,
\end{equation*}
from which the bound of $\nabla^2 \Bw$ is attained by \eqref{es:w_low_exp}.
\medskip

\begin{proof}[The completion of the proof of Theorem \ref{thm:decay_S2}] Thanks to Theorem \ref{thm:decay_S1} and Theorem \ref{thm:decay_S22}, there exist constants $\ep_0,C >0$ such that 
\begin{multline*}
\|e^{\ep_0 t} (\pa_t \Bu, \Bu, \nabla \Bu, \nabla^2 \Bu)\|_{L_p(0,T; L_q(\dOm))} 
+\|e^{\ep_0 t} \Fq\|_{L_p(0,T;W^1_q(\dOm))}
 \leq C \Big( \|\Bu_0\|_{\wt{\CD}_{q,p}^{2-2\slash p}(\dOm)} \\
 + \|(\Bf,g,\BR,\Bh,\Bk)\|_{\CZ_{p,q,\ep_0}}
 +  \sum_{\alpha=1}^M  \Big(  \int_0^T e^{p\ep_0t} |(\eta \Bw, \Bp_\alpha)_{\dOm}|^p dt   \Big)^{1\slash p} \Big).
\end{multline*}
for any $T>0.$ Then Theorem \ref{thm:decay_S1}  and the following inequality
\begin{equation*}
 \Big( \int_0^T e^{p\ep_0t} |(\eta \Bw, \Bp_\alpha)_{\dOm}|^p dt   \Big)^{1\slash p} 
 \leq  \Big( \int_0^T e^{p\ep_0t} |(\eta \Bu, \Bp_\alpha)_{\dOm}|^p dt   \Big)^{1\slash p} 
 + \|e^{\ep_0 t}  \BuL\|_{L_p(0,T;L_q(\dOm))} \|\eta\, \Bp_\alpha\|_{L_{q'}(\dOm)},
\end{equation*}
yield the desired result.
\end{proof}


\section{Global solvability of \eqref{eq:INSL} in $(\Omega_1)$}\label{sec:global}
In this section, we would like to tackle the long time issue of \eqref{eq:INSL} under the case of $(\Omega_1)$ with the piecewise constant density $\rho_0 = \eta$ and the external force $\Bf=\0.$ 
Firstly, some useful auxiliary results for the global issue are given. Next, we outline the main idea to establish the global in time solution by admitting some a priori estimates of the solutions. At last, these a priori bounds are checked. 

\subsection{Some auxiliary estimates}
Let us first derive some useful properties for the system \eqref{eq:INS} in Eulerian coordinates.
For convenience, we recall here by assuming $\Gamma_- = \emptyset$ but general density.
\begin{equation}\label{eq:INS2}
	\left\{\begin{aligned}
		\pa_t(\rho \Bv)+\Di (\rho \Bv \otimes \Bv)  - \Di \BBT(\Bv,\Fp) = \rho \Bf  &\quad\mbox{in}\quad \dOm_t,   \\
		\pa_t \rho + \di (\rho \Bv) =0,  \,\,\,\di\Bv = 0    &\quad\mbox{in}\quad \dOm_t,  \\
		\jump{\BBT(\Bv,\Fp)\Bn_t} = \jump{\Bv}=\0, \,\,\,V_t=\Bv\cdot \Bn_t &\quad\mbox{on}\quad \Ga_t,  \\
		\BBT(\Bv_+,\Fp_+)\Bn_{+,t} = \0 ,\,\,\,V_{+,t}=\Bv_+\cdot \Bn_{+,t} &\quad\mbox{on}\quad \Ga_{+,t}, \\
		(\rho, \Bv)|_{t=0}=(\rho_0, \Bv_0) &\quad\mbox{in}\quad \dOm,
	\end{aligned}\right.
\end{equation}
Concerning \eqref{eq:INS2}, we have the following results.
\begin{prop}\label{prop:conserve}
For the smooth solution $(\rho, \Bv, \Fp)$ of \eqref{eq:INS2}, the following assertions hold true.
\begin{enumerate}
\item Define that $\BBA:\BBB := trace (\BBA \BBB)$ and we have
 \begin{equation*}
\frac{1}{2}\frac{d}{dt} \int_{\dot \Omega_t} \rho |\Bv|^2 \, dx
+\frac{1}{2} \int_{\dOm_t} \mu \BBD(\Bv) :\BBD(\Bv) \,dx
=(\rho \Bf, \Bv)_{\dOm_t}.
\end{equation*}
\item For any $\Ba \in \CR_d,$ we have
\begin{equation*}
\frac{d}{dt} \int_{\dot\Omega_t} \rho \Bv \cdot \Ba \ dx = \int_{\Omega_t}  \rho \Bf \cdot \Ba \,dx.
\end{equation*}
In particular, if $\Bf= \0,$ then $\int_{\Omega_t} \rho \Bv \cdot \Ba = \int_{\Omega} \rho_0 \Bv_0 \cdot \Ba \,d \xi.$
\item The barycenter satisfies
\begin{equation*}
\frac{d}{dt} \int_{\dot\Omega_t} \rho \Bx \ dx= \int_{\dot \Omega_t} \rho \Bv \,dx
\end{equation*}
\end{enumerate}
\end{prop}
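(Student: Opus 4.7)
The core idea is that all three identities reduce to a single Reynolds-type transport formula
\begin{equation*}
\frac{d}{dt}\int_{\dOm_t}\rho\,\phi\,dx \;=\; \int_{\dOm_t}\rho\,(\pa_t\phi+\Bv\cdot\nabla\phi)\,dx
\end{equation*}
for any sufficiently smooth (possibly vector-valued) test field $\phi$. I would derive this by applying the classical transport formula on each bulk $\Omega_{\pm,t}$ to $f=\rho\phi$, converting the surface contribution into $\int_{\pa\Omega_{\pm,t}}\rho\phi\,\Bv\cdot\Bnu\,dS$ via the kinematic conditions $V_t=\Bv\cdot\Bn_t$ and $V_{+,t}=\Bv_+\cdot\Bn_{+,t}$ (the first being unambiguous thanks to $\jump{\Bv}=\0$), and then collapsing the result with the divergence theorem together with the continuity equation $\pa_t\rho+\di(\rho\Bv)=0$.

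Given this formula, (3) is immediate by taking $\phi=\Bx$, since $\pa_t\Bx+\Bv\cdot\nabla\Bx=\Bv$. For (1) I would take $\phi=|\Bv|^2/2$ and substitute $\rho(\pa_t\Bv+\Bv\cdot\nabla\Bv)=\Di\BBT(\Bv,\Fp)+\rho\Bf$ (equivalent to $\eqref{eq:INS2}_1$ modulo the continuity equation). Integration by parts on each bulk produces a volume term $-\BBT:\nabla_x^{\top}\Bv$ which collapses to $-\tfrac12\mu\BBD(\Bv):\BBD(\Bv)$ using $\di\Bv=0$ and the symmetry of $\BBD(\Bv)$, plus boundary terms which vanish: the $\Gamma_t$ contributions from the two sides combine, via $\jump{\Bv}=\0$, into $-\int_{\Gamma_t}\jump{\BBT\Bn_t}\cdot\Bv\,dS=0$, and the $\Gamma_{+,t}$ term vanishes directly by $\BBT_+\Bn_{+,t}=\0$. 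For (2) I would take $\phi=\Bv\cdot\Ba$ with $\Ba=\BBA\Bx+\Bb\in\CR_d$; the antisymmetry of $\nabla\Ba=\BBA$ forces $\Bv^{\top}\BBA\Bv=0$, so $\pa_t\phi+\Bv\cdot\nabla\phi=(\pa_t\Bv+\Bv\cdot\nabla\Bv)\cdot\Ba$. Substituting the momentum equation and integrating by parts as before, the new feature is $\BBT:\nabla_x^{\top}\Ba=\mu\BBD(\Bv):\BBA-\Fp\,\tr\BBA=0$ since a symmetric matrix contracted with an antisymmetric one vanishes, leaving only the forcing term $(\rho\Bf,\Ba)_{\dOm_t}$; the boundary terms vanish exactly as in (1). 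In the force-free case, integrating in time and invoking the change of variables $\int_{\dOm_0}\rho_0\Bv_0\cdot\Ba\,d\xi=\int_{\dOm_t}\rho\Bv\cdot\Ba\,dx|_{t=0}$ gives the asserted conservation.

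The only real subtlety is the bookkeeping across the interior interface $\Gamma_t$: one must track the outward normals from $\Omega_{\pm,t}$ (which are opposite) and align them with the one-sided limits in the jump convention $\jump{\cdot}$. Because $\jump{\Bv}=\0$ (and $\jump{\Ba}=\0$ trivially), the common factor on $\Gamma_t$ is unambiguous and the two contributions reassemble precisely into $-\jump{\BBT\Bn_t}\cdot\Bv$, which is then annihilated by the transmission condition. Everything else is routine calculus.
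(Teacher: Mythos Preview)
Your proposal is correct and follows essentially the same route as the paper: both derive the transport identity $\frac{d}{dt}\int_{\dOm_t}\rho\,\phi\,dx=\int_{\dOm_t}\rho\,D_t\phi\,dx$ (the paper via the Lagrangian change of variables, you via Reynolds transport plus the continuity equation), then specialize $\phi$ and invoke the integration-by-parts identity \eqref{eq:tensor_v} together with the jump and free-surface conditions to kill the boundary terms. The only cosmetic difference is that the paper packages the convective term in part~(2) as $\tfrac12\int\rho(\Bv\otimes\Bv):\BBD(\Ba)\,dx=0$, whereas you use the equivalent pointwise observation $\Bv^{\top}\BBA\Bv=0$.
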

\begin{proof}
Assume that $\Ba$ is any smooth vector field defined on $\dOm_t$ and $D_t:= \pa_t + \Bv \cdot \nabla$ stands for the material derivative. 
By Lagrangian mapping and incompressibility condition, we have
\begin{equation}\label{eq:energy_1}
\frac{d}{dt} \int_{\dot \Omega_t} \rho \Bv \cdot \Ba \, dx
= \frac{d}{dt} \int_{\dOm} (\rho \Bv \cdot \Ba ) \big(\BX_u(\xi,t),t \big) \ d\xi  
= \int_{\dOm_t} \rho\big(  D_t \Bv \cdot \Ba + \Bv \cdot D_t \Ba \big) \, dx.
\end{equation}
Set $\Ba: = \Bv$ in \eqref{eq:energy_1} and we infer from \eqref{eq:INS2} that,
\begin{equation*}
\frac{1}{2}\frac{d}{dt} \int_{\dot \Omega_t} \rho |\Bv|^2 \, dx
= \big( \Di \BBT(\Bv,\Fp) + \rho \Bf, \Bv \big)_{\dOm_t} 
=(\rho \Bf, \Bv)_{\dOm_t}-\frac{1}{2} \int_{\dOm_t} \mu \BBD(\Bv) :\BBD(\Bv) \,dx.
\end{equation*}
On the other hand, take $\Ba  \in \CR_d$ in \eqref{eq:energy_1}, i.e. $\BBD(\Ba)=0,$ and we see
\begin{align*}
\frac{d}{dt} \int_{\dot \Omega_t} \rho \Bv \cdot \Ba \, dx
&= \big( \Di \BBT(\Bv,\Fp) + \rho \Bf, \Ba \big)_{\dOm_t}  + \frac{1}{2} \int_{\Omega_t} \rho (\Bv \otimes \Bv) : \BBD(\Ba) \,dx 
= \int_{\Omega_t}  \rho \Bf \cdot \Ba dx,
\end{align*}
where $\BBA : \nabla \Ba^\top = \sum_{i,j=1}^N A^i_j \pa_j a^i = \big( \BBA: \BBD(\Ba) \big)\slash 2 =0$ for any  symmetric  matrix $\BBA.$
Furthermore, the variation of the barycenter is due to the conservation of mass by similar proof of \eqref{eq:energy_1}.
\end{proof}
Apart from Proposition \ref{prop:conserve},  another useful tool is the following bootstrap argument.
\begin{lemm}\label{lem:bootstrap}
Assume that $X(t) \geq 0$ is a continuous function on $[0,T] \subset [0,\infty[$ satisfying 
$$X(t) \leq a + b X(t)^2 + b X(t)^{3}
\quad \forall \, t \in [0,T],$$
where $a,b>0$ such that 
\begin{equation}\label{eq:arb}
a < r_b(2-b\, r_b)  \slash 3, \quad  X(0) \leq r_b, \quad r_b := (-1 + \sqrt{1 + 3b^{-1} })\slash 3.
\end{equation}
Then we have $X(t) \leq 2 a.$
\end{lemm}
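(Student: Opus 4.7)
The approach is a standard continuity/bootstrap argument built around the function $f(x) := a + bx^2 + bx^3$, so that the hypothesis reads $X(t) \leq f(X(t))$ for all $t \in [0,T]$.

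The plan is to first dispatch the algebra. From the definition $r_b = (-1 + \sqrt{1+3/b})/3$, squaring $3r_b + 1 = \sqrt{1+3/b}$ yields the basic identity
$$3b r_b^2 + 2 b r_b = 1, \qquad \text{equivalently}\qquad b r_b = \frac{1}{3 r_b + 2}.$$
From this I derive the two consequences that drive the proof:
$$1 - b r_b(1 + r_b) \;=\; 3b r_b^2 + 2b r_b - b r_b - b r_b^2 \;=\; b r_b(2 r_b + 1) \;=\; \frac{2 r_b + 1}{3 r_b + 2},$$
and
$$\frac{r_b(2 - b r_b)}{3} \;=\; \frac{r_b(2 r_b + 1)}{3 r_b + 2} \;=\; r_b - b r_b^2 - b r_b^3 \;=\; r_b - (f(r_b) - a).$$
In particular, the assumption $a < r_b(2 - b r_b)/3$ in \eqref{eq:arb} is equivalent to the strict self-improvement $f(r_b) < r_b$.

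Next I would run the bootstrap on the set $S := \{ t \in [0,T] : X(t) \leq r_b \}$. It is nonempty because $X(0) \leq r_b$ by hypothesis, and closed in $[0,T]$ by continuity of $X$. Since $f$ is monotone increasing on $[0,\infty)$, whenever $t \in S$ the hypothesis gives
$$X(t) \leq f(X(t)) \leq f(r_b) < r_b,$$
so by continuity $X$ stays $< r_b$ on a neighborhood of $t$, showing that $S$ is also open in $[0,T]$. Connectedness of $[0,T]$ then forces $S = [0,T]$, i.e. $X(t) \leq r_b$ throughout.

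With $X(t) \leq r_b$ available, I linearize the quadratic and cubic terms: $b X(t)^2 \leq b r_b X(t)$ and $b X(t)^3 \leq b r_b^2 X(t)$, which inserted into the hypothesis give $(1 - b r_b(1 + r_b))\, X(t) \leq a$. Using the identity above,
$$X(t) \;\leq\; \frac{a}{1 - b r_b(1+r_b)} \;=\; \frac{a\,(3 r_b + 2)}{2 r_b + 1} \;\leq\; 2a,$$
the last step being equivalent to $r_b \geq 0$. The only real obstacle is the algebraic reformulation in the first paragraph; once the identity $r_b(2-br_b)/3 = r_b - f(r_b) + a$ is in hand, the continuity argument and the final linear bound are routine.
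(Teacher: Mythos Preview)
Your proof is correct. The paper takes a slightly different route: it sets $\phi(x) := bx^3 + bx^2 - x + a$ (so the hypothesis reads $\phi(X(t)) \geq 0$), observes that $r_b$ is precisely the critical point $\phi'(r_b) = 0$, and checks that the assumption on $a$ forces $\phi(r_b) < 0$. Hence $\phi$ has a unique root $x_0 \in (0,r_b)$, the set $\{\phi \geq 0\}\cap[0,\infty)$ splits into $[0,x_0]$ and a component beyond $r_b$, and continuity together with $X(0)\leq r_b$ traps $X(t)$ in $[0,x_0]$. The paper then bounds $x_0 < 2a$ directly from $\phi(x_0)=0$ and the relation $b = 1/(3r_b^2+2r_b)$. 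Your argument avoids introducing the root $x_0$: you bootstrap to the coarser bound $X(t)\leq r_b$ and then refine via the linear estimate $(1 - br_b(1+r_b))X(t)\leq a$. The paper's version is a one-step continuity argument but requires locating $x_0$, while yours is two steps but works entirely with the explicit quantity $r_b$; both hinge on the same identity $3br_b^2 + 2br_b = 1$.
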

\begin{proof}
Set $f(x):= bx^3 +bx^2-x +a $ and note the fact that $f'(r_b)=3br_b^2 + 2b r_b-1 =0$ by the definition of $r_b.$ In other words, $f(x)$ decreases on $[0,r_b]$ and increases on $]r_b,T].$ Moreover, $f(r_b) <0$ by \eqref{eq:arb}. Then assume that $x_0$ is the (unique) root of $f$ on $]0, r_b[.$ Furthermore, we have 
$$a = x_0 - b(x_0^3 +x_0^2) = x_0 \Big( 1 - \frac{x_0^2 +x_0}{3r_b^2 + 2 r_b} \Big) > \frac{x_0}{2},$$
which yields the desired result by continuity of $X(t).$
\end{proof}

\subsection{Construction of global solution}
From now on, assume that $\Omega = \dOm \cup \Gamma$ is some $W^{2-1\slash r}_r$ ($r\geq q$) bounded droplet as in $(\Omega_1),$ and $\rho = \eta = \eta_+\mathds{1}_{\Omega_+} +\eta_-\mathds{1}_{\Omega_-}$ for some $\eta_\pm >0.$ According to $(\CH 3),$ the viscosity coefficient $\mu$ is reduced to $\mu = \mu_+\mathds{1}_{\Omega_+} +\mu_-\mathds{1}_{\Omega_-}$ for $\mu_\pm := \mu(\eta_\pm)>0.$
By the continuity of $\sA_u \Bn$ across $\Gamma$ and $|\sA_u \Bn|, |\sA_{u_+}\Bn_+| \not= 0,$ 
\eqref{eq:INS2} in Lagrangian coordinates reads as follows,
\begin{equation}\label{eq:INSLL_2}
	\left\{\begin{aligned}
		\pa_t\Bu - \eta^{-1}\Di_\xi \BBT(\Bu,\Fq) = \Bf_{u,\Fq},\,\,\,
		\di_\xi \Bu = g_u=\di_\xi \BR_u    
		                         &\quad\mbox{in}\quad \dOm \times ]0,T^\star[,  \\
\jump{\BBT(\Bu,\Fq)\Bn} = \jump{\Bh_{u,\Fq}}, \quad \jump{\Bu}=\0
                                &\quad\mbox{on}\quad \Ga\times ]0,T^\star[,  \\		
\BBT(\Bu_+,\Fq_+)\Bn_+ = \Bk_{u_+,\Fq_+}  
                                &\quad\mbox{on}\quad \Gamma_+ \times  ]0,T^\star[,  \\	
	\Bu|_{t=0}= \Bv_0 &\quad\mbox{in}\quad \dOm,
	\end{aligned}\right.
\end{equation}
where the nonlinear terms 
$(\Bf_{u,\Fq}, g_u, \BR_u,  \Bh_{u,\Fq}, \Bk_{u_+\Fq_+})$ are defined by 
\begin{equation*}
\eta \Bf_{u,\Fq}:=
-  \Di_\xi  \big( \BBT(\Bu,\Fq)- \BBT_u(\Bu,\Fq) \sA_u \big) ,
\end{equation*}
\begin{equation*}
g_u:= \nabla_{\xi}^{\top}\Bu: (\BBI - \sA^{\top}_u ) ,\quad \BR_u:= (\BBI - \sA^{\top}_u ) \Bu,
\end{equation*}
\begin{equation*}
\Bh_{u,\Fq}:=  \BBT(\Bu,\Fq)\Bn - \BBT_u(\Bu,\Fq)\sA_u \Bn
\quad\hbox{and}\quad
\Bk_{u_+,\Fq_+}:= \BBT(\Bu_+,\Fq_+)\Bn_+ - \BBT_{u_+}(\Bu_+,\Fq_+)\sA_{u_+}\Bn_+.
\end{equation*}
Moreover, $T^{\star}$ stands for the lifespan of the local solution $(\Bu, \Fq)$ of \eqref{eq:INSLL_2}
by Theorem \ref{thm:main_local}.
Then given some small initial datum,
\begin{equation}\label{cdt:v0_small}
\|\Bv_0\|_{\wt{\CD}_{q,p}^{2-2\slash p}(\dOm)} \ll 1,
\end{equation}
we are going to show that the local solution of \eqref{eq:INSLL_2} (according to Theorem \ref{thm:main_local}) does not blow up in finite time.
To this end, introduce the following notation throughout the proof, 
\begin{equation*}
\CI_{\ep,\Bu}(a,b):= \|e^{\ep t} (\pa_t \Bu, \Bu, \nabla \Bu, \nabla^2 \Bu)\|_{L_p(a,b; L_q(\dOm))},
\end{equation*}
where $\Bu$ is any vector and the interval $]a,b[ \subset \BBR.$
\medskip

Now, let us reveal our strategy to above issue by applying the linear results in Section \ref{sec:decay}.
Choose any $0<T < T^{\star}$ and define $(\Bw, P):= (\Bu - \BuL, \Fq-\Fp_{_L})$
with $(\BuL, \FpL)$ solving \eqref{eq:S2_1}. 
Therefore $(\Bw, P)$ satisfies \eqref{eq:S2_2} on $]0,T]$ with given 
$(\Bf, g, \BR,  \Bh, \Bk)=(\Bf_{u,\Fq}, g_u, \BR_u,  \Bh_{u,\Fq}, \Bk_{u_+\Fq_+}).$
In next subsection, we will find some suitable extension  
$(\wt \Bf_{u,\Fq}, \wt g_u, \wt \BR_u, \wt  \Bh_{u,\Fq}, \wt \Bk_{u_+\Fq_+}) \in \CZ_{p,q,\ep_0}$
 such that 
\begin{equation*}
(\wt \Bf_{u,\Fq}, \wt g_u, \wt \BR_u, \wt  \Bh_{u,\Fq}, \wt \Bk_{u_+\Fq_+})|_{ t \in ]0,T] } 
= (\Bf_{u,\Fq}, g_u, \BR_u,  \Bh_{u,\Fq}, \Bk_{u_+\Fq_+}).
\end{equation*}
Moreover, the following estimate holds true for any $0<T<T^{\star},$
\begin{equation}\label{es:priori_global}
\|(\wt \Bf_{u,\Fq}, \wt g_u, \wt \BR_u, \wt  \Bh_{u,\Fq}, \wt \Bk_{u_+\Fq_+}) \|_{\CZ_{p,q,\ep_0}}
 \lesssim   \big( \|\Bv_0\|_{\wt{\CD}_{q,p}^{2-2\slash p}(\dOm)}^2+ X(T)^2\big)\big(X(T) +1 \big),
\end{equation}
with $X(T):= \CI_{\ep_0,\Bw} (0,T) 
+ \|e^{\ep_0 t }P\|_{L_p(0,T;W^1_q(\dOm))}.$
\medskip

Next, we consider the following problem, 
\begin{equation}\label{eq:UQ_2}
	\left\{\begin{aligned}
	 \pa_t\BU -	\eta^{-1} \Di_\xi \BBT(\BU, Q) = \wt \Bf_{u,\Fq},\,\,\,
		\di_\xi \BU =\wt g_u =\di_\xi \wt \BR_u
		  &\quad\mbox{in}\quad \dOm \times \BBR_+,  \\
		\jump{\BBT(\BU, Q)\Bn} = \jump{ \wt\Bh_{u,\Fq} }, \,\,\,\jump{\BU}=\0 &\quad\mbox{on}\quad \Ga\times \BBR_+,  \\
		\BBT_+(\BU_+, Q_+)\Bn_+ = \wt \Bk_{u_+\Fq_+} &\quad\mbox{on}\quad \Ga_{+}\times \BBR_+, \\
	\BU|_{t=0}= \0 &\quad\mbox{in}\quad \dOm.
	\end{aligned}\right.
\end{equation}
Then apply Theorem \ref{thm:decay_S22} and \eqref{es:priori_global} by noting the uniqueness of \eqref{eq:INSLL_2} on $]0,T],$ 
\begin{equation}\label{es:XT_1}
X(T) \lesssim  \big( \|\Bv_0\|_{\wt{\CD}_{q,p}^{2-2\slash p}(\dOm)}^2+ X(T)^2\big)\big(X(T) +1 \big) 
+\sum_{\alpha=1}^M  \Big(  \int_0^T e^{p\ep_0t} |(\eta\Bw, \Bp_\alpha)_{\dOm}|^p dt   \Big)^{1\slash p}.
\end{equation}
To handle the last term on the r.h.s. of \eqref{es:XT_1}, we take advantage of \eqref{eq:embedding} and Theorem \ref{thm:decay_S1}, 
\begin{equation}\label{es:uq_L_exp}
\|e^{\ep_0 t}\Bu_{_L} \|_{L_\infty(\BBR_+;\CD_{q,p}^{2-2\slash p}(\dOm))}
\lesssim \CI_{\ep_0, \Bu_{_L}}(0,\infty)
+\|e^{\ep_0 t} \Fq_{_L} \|_{L_p(\BBR_+;W^1_q(\dOm))}
\lesssim \|\Bv_0\|_{\wt{\CD}_{q,p}^{2-2\slash p}(\dOm)}.
\end{equation}
Immediately, \eqref{es:uq_L_exp} yields that 
\begin{equation}\label{es:u_exp}
\|e^{\ep_0 t}\Bu \|_{L_\infty(0,T;\CD_{q,p}^{2-2\slash p}(\dOm))}
\lesssim \CI_{\ep_0, \Bu} (0,T)
\lesssim \|\Bv_0\|_{\wt{\CD}_{q,p}^{2-2\slash p}(\dOm)}
+ \CI_{\ep_0,\Bw} (0,T).
\end{equation}
On the other hand, thanks to $\rho_0 =\eta,$ $(\eta \,\Bv_0, \Bp_\alpha )_{\dOm} =0$ and Proposition \ref{prop:conserve}, we have 
$$ \int_{\dOm} \eta \Bu(t) \cdot \Bp_\alpha \big(\BX_u (\xi,t) \big) \, d\xi  = ( \eta\Bv_0 , \Bp_\alpha )_{\dOm} =0
\,\,\,\hbox{for any}\,\,\, 0<t < T.
$$ 
Recall that $q>N$ and $\Fp_\alpha \in \CR_d$ and we have 
\begin{equation*}
\| \Bp_\alpha(\xi) - \Bp_\alpha \big(\BX_u(\xi,t)\big) \|_{L_\infty(\dOm)} 
\leq \|\nabla \Bp_\alpha\|_{L_\infty(\BBR^N)} \int_0^t \|\Bu(s)\|_{W^1_q(\dOm)} \,ds 
\leq C_{\ep_0,p} \|e^{\ep_0s}\Bu(s)\|_{L_p(0,t;W^1_q(\dOm))}.
\end{equation*}
Then H\"older inequality implies that 
\begin{equation*}
\big|\big(\eta u(t), \Bp_\alpha\big)_{\dOm} \big| 
= \big|\big(\eta u(t), \Bp_\alpha - \Bp_\alpha \circ X_u (t)\big)_{\dOm} \big|
 \leq C_{\ep_0,p,q,\eta,\dOm} \|\Bu(t)\|_{L_q(\dOm)}  \CI_{\ep_0, \Bu} (0,t),
\end{equation*}
from which we obtain
\begin{equation}\label{es:etau_exp}
 \sum_{\alpha=1}^M  \Big(  \int_0^T e^{p\ep_0 t} \big|\big(\eta\Bu(t), \Bp_\alpha\big)_{\dOm}\big|^p dt \Big)^{1\slash p}
\lesssim  \CI_{\ep_0,\Bu}(0,T)^2 
\lesssim\|\Bv_0\|_{\wt{\CD}_{q,p}^{2-2\slash p}(\dOm)}^2+ X(T)^2.
\end{equation}
Therefore, \eqref{cdt:v0_small}, \eqref{es:uq_L_exp} and \eqref{es:etau_exp} furnish that
\begin{equation}\label{es:XT_2}
\sum_{\alpha=1}^M  \Big(  \int_0^T e^{p\ep_0t} |(\eta\Bw, \Bp_\alpha)_{\dOm}|^p dt   \Big)^{1\slash p}
\lesssim \|\Bv_0\|_{\wt{\CD}_{q,p}^{2-2\slash p}(\dOm)}+ X(T)^2.
\end{equation}
Combining the condition \eqref{cdt:v0_small},  \eqref{es:XT_1} and \eqref{es:XT_2}, we have
\begin{equation}\label{es:XT}
X(T)\lesssim \|\Bv_0\|_{\wt{\CD}_{q,p}^{2-2\slash p}(\dOm)}+ X(T)^2 + X(T)^3. 
\end{equation}
By \eqref{es:XT},  \eqref{cdt:v0_small} and Lemma \ref{lem:bootstrap},  we have $X(T) \leq 2C\|\Bv_0\|_{\wt{\CD}_{q,p}^{2-2\slash p}(\dOm)},$ which allows us to extend $(\Bw, P)$ beyond $T^{\star}$ by the standard bootstrap arguments as in \cite{Shi2015}.

\subsection{A priori estimates}
To complete our proof of Theorem \ref{thm:main_global}, it remains to verify the claim \eqref{es:priori_global}. As before, we first introduce some extensions operators and then check the bound of \eqref{es:priori_global} for the extended non-homogeneous terms.
\subsubsection*{Extension operators}
Assume that $\varphi(s) \in C^{\infty}(\BBR)$ is some cut-off function  such that
$\varphi(s) =1$ for $s\leq 0$ and $\varphi(s) =0$ for $s\geq 1.$ Then denote $\varphi_t(s):= \varphi(s-t)$ for any $t \in ]0,T^{\star}[$ and recall the definition of $E_{(t)}$ in Section \ref{sec:local}.
For smooth $E$-valued  $\Fh,$ it is not hard to check
\begin{align*}
\|e^{\ga s}\varphi_t(s)E_{(t)}\Fh(\cdot, s) \|_{L_p (\BBR; E)}^p 
&\leq \|e^{\ga s}E_{(t)}\Fh(\cdot, s) \|_{L_p (0,t; E)}^p 
+\int_t^{\min\{2t, t+1\} } e^{p\ga s} |\varphi_t(s)|^p \|\Fh(\cdot,2t-s)\|_{E}^p \,ds\\
&\leq  \|e^{\ga s}E_{(t)}\Fh(\cdot, s) \|_{L_p (0,t; E)}^p  
+ e^{2p\ga}\int_{\max\{0,t-1\}}^t e^{p\ga \wt s}\|\Fh(\cdot, \wt s)\|_{E}^p \,d\wt s,
\end{align*}
where the change of variables  $\wt s:= 2t-s$ is applied in the last inequality. 
Therefore,
\begin{equation}\label{es:Fh_2} 
\|e^{\ga s}\varphi_t(s) E_{(t)}\Fh(\cdot, s) \|_{L_p (\BBR; E)} 
\leq (1+e^{2p\ga})^{1\slash p} \|e^{\ga s}\Fh(\cdot, s) \|_{L_p (0,t; E)} 
\quad \hbox{for any}\,\,\, \ga, t > 0.
\end{equation}
Furthermore, the formula of $\pa_s E_{(t)}\Fh(\cdot, s)$ and \eqref{es:Fh_2} yield that
\begin{align}\label{es:DFh_2}
\|e^{\ga s} \pa_s \big(\varphi_t(s) E_{(t)} \Fh(\cdot, s)\big) \|_{L_p (\BBR; E)}
 &\leq e^{2\gamma} \sup|\varphi'| 
\|e^{\ga s}  \Fh(\cdot, s) \|_{L_p (0,T; E)}\\ \nonumber
& \quad + (1+e^{2p\ga})^{1\slash p} \|e^{\ga s}\pa_s \Fh(\cdot, s) \|_{L_p (0,t; E)}
\end{align}
for $\Fh(\cdot,0) =0$ and $\ga, t > 0.$
Then according to \eqref{es:Fh_2} and \eqref{es:DFh_2},  we immediately know that
\begin{equation}\label{es:extend_w_exp}
\|e^{\ep_0 t}\varphi_{_T}(t)\ET \Bw(\cdot, t) \|_{W^{2,1}_{q,p} (\dOm \times \BBR)} 
\lesssim  \CI_{\ep_0,\Bw} (0,T).
\end{equation}
\medskip

Now, let us consider the extension of the solution $\BuL$ of \eqref{eq:S2_1}. Define that
\begin{equation*}
\oBuL(\cdot,t) :=  e^{-|t| \CA_q} \Bv_0(\cdot) 
\quad\hbox{for}\,\,\, t\in \BBR.
\end{equation*}
By \eqref{es:uq_L_exp} and the mixed derivative theorem, we have
\begin{equation}\label{es:oBuL}
\|e^{\ep_0 |t|}\oBuL\|_{H^{1\slash 2}_p(\BBR; W^1_q(\dOm))}
 \lesssim \|e^{\ep_0 |t|}\oBuL\|_{W^{2,1}_{q,p}(\dOm \times \BBR)}
\lesssim \|\Bv_0\|_{\wt{\CD}_{q,p}^{2-2\slash p}(\dOm)}.
\end{equation}
With the definitions of $\varphi_{_T}$ and $\oBuL,$ let us take  
$\wt\Bu(\cdot ,t ) := \oBuL (\cdot, t) + \varphi_{_T}(t)\ET \Bw(\cdot, t).$
Then \eqref{es:extend_w_exp} \eqref{es:oBuL} and mixed derivative theorem furnish that,
\begin{equation}\label{es:tBu_exp}
\|e^{\ep_0 t}\wt\Bu\|_{H^{1\slash 2}_p(\BBR; W^1_q(\dOm))}
 \lesssim \|e^{\ep_0 t}\wt\Bu\|_{W^{2,1}_{q,p}(\dOm \times \BBR)}
\lesssim \|\Bv_0\|_{\wt{\CD}_{q,p}^{2-2\slash p}(\dOm)} +  \CI_{\ep_0,\Bw} (0,T).
\end{equation}
Moreover, the pressure $\wt \Fq := \Fp_{_L} + \varphi_{_T}(t)\ET P (\cdot, t)$  for any $t \in [0,T].$
Then by \eqref{es:Fh_2} and  \eqref{es:uq_L_exp}, we have
\begin{equation}\label{es:tq_exp}
\|e^{\ep_0 t}\wt \Fq\|_{L_p(\BBR_+;W^1_q(\dOm))} 
\lesssim  \|\Bv_0\|_{\wt{\CD}_{q,p}^{2-2\slash p}(\dOm)}
+ \|e^{\ep_0 t }P\|_{L_p(0,T;W^1_q(\dOm))}.
\end{equation}
\medskip

Before going into the details of the calculations, it is necessary to address some bounds for $\sA_u.$ 
Firstly, we infer from Lemma \ref{lem:A_DA} and \eqref{es:u_exp} that,  
\begin{align} \label{ob:key_exp_1}
\|\BBI -\sA_u\|_{L_\infty(0,T; W^1_q(\dOm))} &\lesssim \int_0^T \|\nabla_\xi \Bu\|_{W^{1}_q(\dOm)} \,dt
\lesssim \|\Bv_0\|_{\wt{\CD}_{q,p}^{2-2\slash p}(\dOm)}
+ \CI_{\ep_0,\Bw} (0,T),
\\ \nonumber
\end{align}
On the other hand, Lemma \ref{lem:A_DA}, Proposition \ref{prop_uDu} and \eqref{es:u_exp} yield
\begin{align}\label{ob:key_exp_2}
\|\pa_t \sA_u\|_{L_{p\slash \theta} (0,T;L_\beta(\dOm))} 
 \lesssim \|\nabla \Bu\|_{L_{p\slash \theta} (0,T;L_\beta(\dOm))} 
 &\lesssim \|\Bu\|_{L_{\infty} \big(0,T; \CD^{2-2\slash p}_{q,p}(\dOm) \big)}^{1-\theta} \|\nabla^2 \Bu\|_{L_p (0,T;L_{q}(\dOm))}^{\theta}
 \\ \nonumber
&\lesssim \|\Bv_0\|_{\wt{\CD}_{q,p}^{2-2\slash p}(\dOm)}
+ \CI_{\ep_0,\Bw} (0,T). 
\end{align}
for some $ (\theta, \beta) \in ]0,1[ \times ]q,\infty]$ satisfying $1 - 2(1-\theta)\slash p = N \slash q - N \slash \beta.$
In the following, we give the definitions of suitable $(\wt \Bf_{u,\Fq}, \wt g_u, \wt \BR_u, \wt  \Bh_{u,\Fq}, \wt \Bk_{u_+\Fq_+})$ fulfilling \eqref{es:priori_global}.

\subsubsection*{\underline{Bounds for $\wt\Bf_{u,\Fq}$}}
Recall $ \BBH_{u} :=  \nabla_\xi^{\top}\Bu (\BBI-\sA_{u}^{\top}) +(\BBI-\sA_{u})\nabla_\xi \Bu^{\top}$ and the formula
\begin{equation*}
\eta \Bf_{u,\Fq} =
                 -\Di_\xi \Big(\mu \big(\BBH_{u}
                 +\BBD(\Bu)(\BBI-\sA_{u}) \big) \Big)
      +\Di_\xi \big( \mu \BBH_{u}(\BBI-\sA_{u})\big) 
                 +\Di_\xi \big(\Fq\,(\BBI-\sA_{u})\big).
\end{equation*}
Then inspired by previous discussions on the short time issue, we set
\begin{align*}
\eta \wt \Bf_{u,\Fq} &:=
                 -\Di_\xi \Big(\mu \big(\wt\BBH_{u} +\tBBD_u \big) \Big)
      +\Di_\xi \big( \mu  \wt\BBH_{u} \cdot \varphi_{_T}(t)E_{_{(T)}}(\BBI-\sA_{u}) \big) \\
                &\quad  +\Di_\xi \big( \wt\Fq \cdot   \varphi_{_T}(t)E_{_{(T)}}(\BBI-\sA_{u})\big),\\
\tBBH_{u} &:=  \nabla_\xi^{\top} \wt\Bu \cdot \varphi_{_T}(t) E_{_{(T)}}(\BBI-\sA_{u}^{\top})
             + \varphi_{_T}(t)E_{_{(T)}}(\BBI-\sA_{u}) \cdot \nabla_\xi \wt\Bu^{\top},\\
 \tBBD_{u} &:=\BBD(\wt\Bu) \cdot  \varphi_{_T}(t)E_{_{(T)}} (\BBI -\sA_{u}).         
\end{align*}
Thanks to \eqref{es:tBu_exp}, \eqref{es:tq_exp} and \eqref{ob:key_exp_1}, the following $W^1_q$ estimates are attained,
\begin{equation}\label{es:tHtD1_exp}
\|e^{\ep_0 t}(\tBBH_{u}, \tBBD_{u})\|_{L_{p}(\BBR;W^1_q(\dOm))} 
\lesssim \|\Bv_0\|_{\wt{\CD}_{q,p}^{2-2\slash p}(\dOm)}^2
+ X(T)^2,
\end{equation}
\begin{equation}\label{es:tHtD2_exp}
\| e^{\ep_0 t} \wt\BBH_{u} \cdot \varphi_{_T}(t)E_{_{(T)}}(\BBI-\sA_{u})\|_{L_{p}(\BBR;W^1_q(\dOm))} 
\lesssim \Big( \|\Bv_0\|_{\wt{\CD}_{q,p}^{2-2\slash p}(\dOm)}
+X(T)\Big)^3,
\end{equation}
\begin{equation}\label{es:tq_f_exp}
\|e^{\ep_0 t} \wt\Fq \cdot   \varphi_{_T}(t)E_{_{(T)}}(\BBI-\sA_{u})\|_{L_{p}(\BBR;W^1_q(\dOm))} 
\lesssim \|\Bv_0\|_{\wt{\CD}_{q,p}^{2-2\slash p}(\dOm)}^2
+ X(T)^2.
\end{equation}
Thus we can conclude the desired bound of $\wt \Bf_{u,\Fq}$ in \eqref{es:priori_global} from \eqref{es:tHtD1_exp}, \eqref{es:tHtD1_exp} and \eqref{es:tq_f_exp}, 
\begin{equation*}
\|e^{\ep_0 t} \wt \Bf_{u,\Fq} \|_{L_p(\BBR_+;L_q(\dOm))}
 \lesssim 
 \big( \|\Bv_0\|_{\wt{\CD}_{q,p}^{2-2\slash p}(\dOm)}^2+ X(T)^2\big)
 \big(X(T) +1 \big).
\end{equation*}

\subsubsection*{\underline{Bounds for $\wt g_u$}}
Note the formula $g_u =  \nabla_{\xi}^{\top} \Bu:  (\BBI - \sA^{\top}_u ) 
 = \di_\xi \BR_u =\di_\xi  \big( (\BBI - \sA^{\top}_{u} ) \Bu \big),$
and it is reasonable to set $\wt g_u$ and $\tBR_u$ by
\begin{equation*}
\tg_u:= \nabla_{\xi}^{\top} \wt \Bu:  \varphi_{_T}(t)E_{_{(T)}}(\BBI - \sA^{\top}_u ) 
\,\,\, \hbox{and}\,\,\,
\tBR_u :=  \varphi_{_T}(t)E_{_{(T)}}(\BBI - \sA^{\top}_u )  \wt \Bu.
\end{equation*}
Then according to \eqref{es:tHtD1_exp}, the $W^1_q(\dOm)$ norm of $\tg_u$ is easily given by,
\begin{equation}\label{es:tg1_exp}
\|e^{\ep_0 t}\tg_u \|_{L_{p}(\BBR;W^1_q(\dOm))} 
\lesssim \|\Bv_0\|_{\wt{\CD}_{q,p}^{2-2\slash p}(\dOm)}^2
+ X(T)^2.
\end{equation}
Moreover, the fact that $\jump{\tBR_u \cdot \Bn} =0$ on $\Gamma$ yields that $\tg_u \in \BW^{-1}_q(\Omega)$ from the proof in  Section \ref{sec:local}.
\medskip

To find the bound in $H^{1\slash 2}_p \big(\BBR; L_q(\dOm)\big),$  firstly assume that $(p,q) \in (I),$ 
and employ Lemma \ref{lem:A_DA} and \eqref{es:u_exp},
\begin{equation*}
\|\pa_t \sA_u \|_{L_\infty(0,T;L_q(\dOm))} 
\lesssim \|\nabla_\xi \Bu\|_{L_\infty(0,T;L_q(\dOm))} 
 \lesssim \|\Bu\|_{L_\infty(0,T;D_{q,p}^{2-2\slash p}(\dOm))} 
 \lesssim \|\Bv_0\|_{\wt{\CD}_{q,p}^{2-2\slash p}(\dOm)} +X(T).
\end{equation*}
Thus by Lemma \ref{lem:Hhalf}, \eqref{es:tBu_exp} and \eqref{ob:key_exp_1} , we obtain that
\begin{align*}
\|e^{\ep_0 t}\tg_u \|_{H^{1\slash 2}_p (\BBR; L_q(\dOm))} 
\lesssim \|\BBI - \sA_{u}^{\top}\|_{W^{1,1}_{q,\infty}(\dOm \times ]0,T[)}
\|e^{\ep_0 t} \nabla_{\xi}^{\top} \wt \Bu\|_{H^{1\slash 2, 1\slash 2}_{q,p}(\dOm \times \BBR)}
\lesssim  \|\Bv_0\|_{\wt{\CD}_{q,p}^{2-2\slash p}(\dOm)}^2+ X(T)^2.
\end{align*}
On the othere hand, if $(p,q) \in (II),$ then Lemma \ref{lem:Hhalf3} and \eqref{ob:key_exp_2} yield
\begin{align*}
\|e^{\ep_0 t}\tg_u \|_{H^{1\slash 2}_p (\BBR; L_q(\dOm))} 
&\lesssim \|e^{\ep_0 t} \nabla_{\xi}^{\top} \wt \Bu\|_{H^{1\slash 2, 1\slash 2}_{q,p}(\dOm \times \BBR)} \|\BBI -\sA_u\|_{L_\infty(0,T; W^1_q(\dOm))}^{1\slash 2} \\
& \times \Big( \|\BBI -\sA_u\|_{L_\infty(0,T; W^1_q(\dOm))} 
 +\big\| \pa_t \big(\varphi_{_T}(t)E_{_{(T)}}(\BBI - \sA^{\top}_u ) \big)  \big\|_{L_{p\slash \theta} (\BBR;L_\beta(\dOm))}   \Big)^{1\slash 2}
\\
&\lesssim  \|\Bv_0\|_{\wt{\CD}_{q,p}^{2-2\slash p}(\dOm)}^2+ X(T)^2.
\end{align*}
Therefore, we can conclude all the necessary bounds of $\tg_u$ from above discussions for $(p,q) \in (I) \cup (II),$
\begin{equation*}
\|e^{\ep_0 t} \wt g_u\|_{H^{1,1\slash 2}_{q,p} (\dOm \times \BBR)}
\lesssim  \|\Bv_0\|_{\wt{\CD}_{q,p}^{2-2\slash p}(\dOm)}^2+ X(T)^2.
\end{equation*}

\subsubsection*{\underline{Bounds for $\wt \BR_u$}}
As $\tBR_u :=  \varphi_{_T}(t)E_{_{(T)}}(\BBI - \sA^{\top}_u )  \wt \Bu,$ 
we directly apply \eqref{es:tBu_exp}, \eqref{ob:key_exp_1} and Lemma \ref{lem:A_DA},
\begin{align*}
\|e^{\ep_0 t} (\tBR_u, \pa_t \tBR_u)\|_{L_p(\BBR_+;L_q(\dOm))} 
&\lesssim \|\BBI - \sA^{\top}_u\|_{L_\infty(\dOm \times ]0,T[)}
\|e^{\ep_0 t} (\wt\Bu, \pa_t \wt\Bu)\|_{L_p(\BBR_+;L_q(\dOm))} \\
& \quad +\big\|\pa_t  \big( \varphi_{_T}(t)E_{_{(T)}}(\BBI - \sA^{\top}_u )\big) \big\|_{L_p(0,2T;L_\infty(\dOm))}
\|e^{\ep_0 t} \wt\Bu\|_{L_\infty(0,2T;L_q(\dOm))}\\
& \lesssim   \|\Bv_0\|_{\wt{\CD}_{q,p}^{2-2\slash p}(\dOm)}^2+ X(T)^2,
\end{align*}
which implies the bound of $\tBR_u$ in \eqref{es:priori_global}.

\subsubsection*{\underline{Bounds for $\wt\Bh_{u,\Fq}$ and $\wt\Bk_{u,\Fq}$}}
According to our previous experience, the tricks for extensions $\wt\Bh_{u,\Fq}$ and $\wt\Bk_{u,\Fq}$ are similar. For simplicity, we only consider $\wt\Bh_{u,\Fq}$ here. 
As $\Fq$ is a solution of \eqref{eq:INSLL_2}, it is easy to see that
\begin{equation*}
\Fq = |\sA_u \Bn|^{-2} \big( \mu \BBD_u(\Bu) \sA_u \Bn
 \cdot \sA_u \Bn \big) 
 \,\,\, \hbox{on}\,\,\, \Gamma \times ]0,T[.
\end{equation*}
Inspired by the discussions on $\wt \Bf_{u,\Fq},$ we introduce that
\begin{align*}
\wt\Bh_{u,\Fq} &:=
               \mu \big(\wt\BBH_{u} +\tBBD_u \big) \Bn 
      -\big( \mu  \wt\BBH_{u} \cdot \varphi_{_T}(t)E_{_{(T)}}(\BBI-\sA_{u}) \big) \Bn\\
                &\quad  - \big( \wt\Fp \cdot   \varphi_{_T}(t)E_{_{(T)}}(\BBI-\sA_{u})\big) \Bn.\\
  \wt \Fp&:= |\wt\sA_u \Bn|^{-2}  \mu  \big( \wt \BBH_{u}  + \BBD(\wt\Bu)\big)  \wt\sA_u \Bn
 \cdot \wt\sA_u \Bn \big) \\
 \wt \sA_u &:=   \varphi_{_T}(t)\big(E_{_{(T)}}(\sA_{u}-\BBI)+ \BBI\big) 
\end{align*} 
As $W^1_q(\dOm)$ is an algebra for $q>N,$ we infer from \eqref{es:tHtD1_exp}, \eqref{es:tHtD1_exp} and \eqref{es:tq_f_exp} that
\begin{equation}\label{es:th_exp_1}
\|e^{\ep_0 t}\wt\Bh_{u,\Fq}\|_{L_p(\BBR; W^1_q(\dOm))} \lesssim  
\big( \|\Bv_0\|_{\wt{\CD}_{q,p}^{2-2\slash p}(\dOm)}^2+ X(T)^2\big)
 \big(X(T) +1 \big).
\end{equation}
By the above computations of $\tg_u,$ $\wt\BBH_{u}$ and $\wt\BBD_{u}$ are bounded in $H^{1,1\slash 2}_{q,p} (\dOm \times \BBR)$ by
\begin{equation}\label{es:HD_exp}
\|e^{\ep_0 t} (\wt\BBH_{u}, \wt\BBD_{u}) \|_{H^{1,1\slash 2}_{q,p} (\dOm \times \BBR)}
\lesssim  \|\Bv_0\|_{\wt{\CD}_{q,p}^{2-2\slash p}(\dOm)}^2+ X(T)^2.
\end{equation}
Combine the estimates \eqref{ob:key_exp_1} and \eqref{es:HD_exp},
\begin{multline}\label{es:th_exp_21}
\big\| e^{\ep_0 t}\mu \big(\wt\BBH_{u} +\tBBD_u \big) \Bn \big\|_{H^{1\slash 2}_{p} (\BBR;L_q(\dOm))}
      +\big\| e^{\ep_0 t}\big( \mu  \wt\BBH_{u} \cdot \varphi_{_T}(t)E_{_{(T)}}(\BBI-\sA_{u}) \big) \Bn \big\|_{H^{1\slash 2}_{p} (\BBR;L_q(\dOm))}\\
       \lesssim  
\big( \|\Bv_0\|_{\wt{\CD}_{q,p}^{2-2\slash p}(\dOm)}^2+ X(T)^2\big)
 \big(X(T) +1 \big).
\end{multline} 
\begin{equation}\label{es:tp_exp}
\|e^{\ep_0 t }\wt\Fp\|_{H^{1\slash 2}_{p} (\BBR;L_q(\dOm))}
\lesssim  \big( \|\Bv_0\|_{\wt{\CD}_{q,p}^{2-2\slash p}(\dOm)}+ X(T)\big)  \big( 1+ \|\Bv_0\|_{\wt{\CD}_{q,p}^{2-2\slash p}(\dOm)}+ X(T)\big).
\end{equation}
Then keeping \eqref{es:tp_exp} in mind, we can employ the similar procedure as the bound of $\|e^{\ep_0 t}\wt g_u\|_{H^{1\slash 2}_p (\BBR; L_q(\dOm))}$ and then obtain that 
\begin{equation}\label{es:th_exp_22}
\big\|  e^{\ep_0 t}\big( \wt\Fp \cdot   \varphi_{_T}(t)E_{_{(T)}}(\BBI-\sA_{u})\big) \Bn \big\|_{H^{1\slash 2}_{p} (\BBR;L_q(\dOm))}
 \lesssim  \big( \|\Bv_0\|_{\wt{\CD}_{q,p}^{2-2\slash p}(\dOm)}^2+ X(T)^2\big)\big(X(T) +1 \big).
\end{equation}
Thus \eqref{es:th_exp_21} and \eqref{es:th_exp_22} immediately yield
\begin{equation}\label{es:th_exp_2}
\|e^{\ep_0 t }\wt\Bh_{u,\Fq}\|_{H^{1\slash 2}_{p} (\BBR;L_q(\dOm))}
 \lesssim  \big( \|\Bv_0\|_{\wt{\CD}_{q,p}^{2-2\slash p}(\dOm)}^2+ X(T)^2\big)\big(X(T) +1 \big).
\end{equation}
Finally, \eqref{es:th_exp_1} and  \eqref{es:th_exp_2} furnish that
\begin{equation*}
\|e^{\ep_0 t }\wt\Bh_{u,\Fq}\|_{H^{1,1\slash 2}_{q,p} (\dOm \times \BBR)}
 \lesssim  \big( \|\Bv_0\|_{\wt{\CD}_{q,p}^{2-2\slash p}(\dOm)}^2+ X(T)^2\big)\big(X(T) +1 \big),
\end{equation*}
which is admissible in \eqref{es:priori_global}.


\begin{appendix}
\section{Technical lemma}

To display the Lagrangian coordinates approach, let us recall some technical results here. Assume that $\Bu \in W^{2,1}_{q,p}(G\times ]0,T[)$ for some open (not necessary bounded) domain $G \subset \BBR^N$ and $T \in ]0,\infty].$ Denote that
\begin{equation*}
\BX(\xi,t) := \xi +\int^t_0 \Bu (\xi, \tau) \,d \tau \quad \hbox{for} \,\,\, \xi \in G. 
\end{equation*}
If  $\|\int^t_0 \nabla_\xi \Bu (\xi, t') \, dt' \|_{L_\infty(G)}$ is strictly smaller than $1$,  then the following definition makes sense,
\begin{equation*}
\sA := (\nabla_{\xi}^{\top} \BX^{-1})^{\top}= (\nabla_\xi \BX)^{-1} = \sum_{k=0}^{\infty} \big(-\int^t_0 \nabla_\xi \Bu (\xi, t') \, dt'\big)^k.
\end{equation*}
In fact, we have the following Lemma concerning the estimates of $\sA$ and $\sA^{\top}.$
\begin{lemm}\label{lem:A_DA}
Assume that $\Bu$ is some smooth enough vector field satisfying 
$$\|\nabla_\xi \Bu\|_{L_1(0,T;L_\infty(G))} \leq  \kappa <1,$$
and $\FA$ stands for $\sA$ or $ \sA^{\top}$ as we defined above. Then following assertions hold true. 
\begin{enumerate}
\item For some terms of $\FA,$ there exists a constant $C_{N,\kappa}$ such that
\begin{equation*}
\|\FA\|_{L_\infty (G \times ]0,T[)} \leq C_{N,\kappa} ,  
\end{equation*}
\begin{equation*}
\|(\FA- \BBI,\FA\FA- \BBI, \FA\FA^{\top}- \BBI)\|_{L_\infty (G \times ]0,T[)} \leq C_{N,\kappa} \|\nabla_\xi \Bu\|_{L_1(0,T;L_\infty(G))}, 
\end{equation*}
\item For the first order derivative terms of $\FA$ and $q \in ]1,\infty[,$ there exists a constant $C_{N,k}$ such that 
\begin{equation*}
\|\nabla_\xi (\FA, \FA\FA, \FA\FA^{\top})\|_{L_\infty (0,T; L_q(G))} \leq C_{N,\kappa} \|\nabla^2_\xi \Bu\|_{L_1(0,T;L_q(G))}.
\end{equation*}
\item For the time derivative of $\FA,$ we have for any suitable $(\widetilde{p},\widetilde{q}) \in [1,\infty]^2$
\begin{equation*}
\|\pa_t \FA\|_{L_{\widetilde{p}}(0,T; L_{\widetilde q}(G))}
 \leq C_{N,\kappa} \| \nabla_\xi \Bu\|_{L_{\widetilde{p}}(0,T;L_{\widetilde q}(G))},
\end{equation*}
\begin{equation*}
\|\pa_t \nabla_\xi \FA\|_{L_{\tp}(0,T; L_{\tq}(G))} 
\leq C_{N,\kappa}\Big(
\|\nabla_\xi^2 \Bu \|_{L_{\widetilde{p}}(0,T; L_{\widetilde q}(G))} 
+ \|\nabla_\xi \Bu\|_{L_{\tp}(0,T; L_{\infty}(G))} 
\|\nabla_\xi^2 \Bu \|_{L_{1}(0,T; L_{\widetilde q}(G))}\Big).
\end{equation*}
\end{enumerate}
Above all constants  $C_{N,\kappa}$ go to $\infty$ as $\kappa$ tends to $1.$ 
\end{lemm}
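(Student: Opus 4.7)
The plan is to leverage the Neumann series expansion $\sA = \sum_{k \geq 0}(-V)^k$, where $V(\xi, t) := \int_0^t \nabla_\xi \Bu(\xi, t')\, dt'$ satisfies $\|V\|_{L_\infty(G \times ]0,T[)} \leq \kappa < 1$. Absolute convergence in $L_\infty$ gives $\|\sA\|_{L_\infty} \leq (1-\kappa)^{-1}$, and the transposed series $\sA^\top = \sum_{k \geq 0}(-V^\top)^k$ yields the same bound for $\sA^\top$. The central computational device throughout is the resolvent identity $\FA(I+\wt V) = (I+\wt V)\FA = I$ (with $\wt V = V$ or $V^\top$ according to $\FA \in \{\sA, \sA^\top\}$), which upon differentiation yields
\[\nabla_\xi \FA = -\FA(\nabla_\xi \wt V)\FA, \qquad \pa_t \FA = -\FA(\pa_t \wt V)\FA,\]
where $\pa_t \wt V$ is $\nabla_\xi \Bu$ or its transpose.

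For assertion (1), the identity $\FA - I = -\FA \wt V$ together with $\|\wt V\|_{L_\infty} \leq \|\nabla_\xi \Bu\|_{L_1(0,T; L_\infty(G))}$ produces the claimed estimate on $\FA - I$. The factorizations
\[\FA\FA - I = \FA(\FA - I) + (\FA - I), \qquad \FA\FA^\top - I = \FA(\FA^\top - I) + (\FA - I),\]
combined with the $L_\infty$-bound on $\FA$, handle the remaining two cases. For assertion (2), I take the $L_\infty(0,T; L_q(G))$-norm of $\nabla_\xi \FA = -\FA(\nabla_\xi \wt V)\FA$, using that $\nabla_\xi \wt V = \int_0^t \nabla_\xi^2 \Bu\, dt'$ obeys $\|\nabla_\xi \wt V\|_{L_\infty(0,T; L_q(G))} \leq \|\nabla_\xi^2 \Bu\|_{L_1(0,T; L_q(G))}$. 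The product rule applied to $\FA\FA$ and $\FA\FA^\top$, combined with the uniform bounds from (1), closes the estimate.

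For assertion (3), the bound on $\pa_t \FA$ is immediate from $\pa_t \FA = -\FA(\pa_t \wt V)\FA$ and H\"older's inequality. For the mixed derivative, I differentiate once more to obtain
\[\pa_t \nabla_\xi \FA = -(\pa_t \FA)(\nabla_\xi \wt V)\FA - \FA(\pa_t \nabla_\xi \wt V)\FA - \FA(\nabla_\xi \wt V)(\pa_t \FA),\]
where $\pa_t \nabla_\xi \wt V = \nabla_\xi^2 \Bu$ (or its transpose). The middle term contributes the first summand of the claimed bound after taking $L_{\tp}(0,T; L_{\tq}(G))$-norms. For the outer two terms, I apply H\"older in time (placing $\pa_t \FA$ in $L_{\tp}(0,T; L_\infty(G))$, which by the formula for $\pa_t \FA$ is dominated by $C\|\nabla_\xi \Bu\|_{L_{\tp}(0,T; L_\infty(G))}$) together with H\"older in space (placing $\nabla_\xi \wt V$ in $L_\infty(0,T; L_{\tq}(G))$, dominated by $\|\nabla_\xi^2 \Bu\|_{L_1(0,T; L_{\tq}(G))}$); this reproduces precisely the nonlinear summand on the right-hand side. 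The main obstacle is not analytical but bookkeeping, namely matching the mixed Lebesgue exponents so that the product structure falls out cleanly; all constants $C_{N,\kappa}$ inherit the factor $(1-\kappa)^{-1}$ coming from $\|\FA\|_{L_\infty}$ and hence blow up as $\kappa \to 1$.
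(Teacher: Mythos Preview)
Your proposal is correct and follows essentially the same route as the paper: both arguments rest on the Neumann series $\sA=\sum_{k\geq 0}(-V)^k$ with $V=\int_0^t\nabla_\xi\Bu\,dt'$, the algebraic factorizations of $\FA\FA-\BBI$ and $\FA\FA^\top-\BBI$ in terms of $\FA-\BBI$, and then differentiation. The only cosmetic difference is that you package the derivative computations via the resolvent identity $\FA(I+\wt V)=\BBI$, obtaining $\nabla_\xi\FA=-\FA(\nabla_\xi\wt V)\FA$ and $\pa_t\FA=-\FA(\pa_t\wt V)\FA$, whereas the paper differentiates the Neumann series term by term; these yield identical bounds with the same $(1-\kappa)^{-m}$ constants.
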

\begin{proof}
For simplicity, we only concentrate to the proof concerning the terms $\sA,$ $\sA-\BBI$ and $\sA^{\top}\sA-\BBI$. Recall the definition of $\sA,$ we have 
\begin{equation*}
\|\sA\|_{L_\infty (G \times ]0,T[)} 
\leq \frac{C_N}{1-\| \int_0^t \nabla_\xi \Bu \, dt'\|_{L_\infty (G\times ]0,T[)}}  
\leq \frac{C_N}{1-\kappa} \,\cdot
\end{equation*} 
Note that $\| \int_0^t \nabla_\xi \Bu \, dt'\|_{L_\infty (G\times ]0,T[)} \leq \|\nabla_\xi \Bu\|_{L_1(0,T;L_\infty(G))} <1 ,$ then  we have
\begin{equation*}
\|\sA -\BBI\|_{L_\infty(G\times ]0,T[)}  
\leq \frac{\| \int_0^t \nabla_\xi \Bu \, dt'\|_{L_\infty (G\times ]0,T[)}}{1-\| \int_0^t \nabla_\xi \Bu \, dt'\|_{L_\infty (G\times ]0,T[)}}  
\leq \frac{C_N}{1- \kappa} \|\nabla_\xi \Bu\|_{L_1(0,T;L_\infty(G))}.
\end{equation*} 
Rewrite the term $\sA^{\top}\sA-\BBI$ by
\begin{equation}\label{eq:AA-I}
\sA^{\top}\sA-\BBI = (\sA^{\top}-\BBI)(\sA-\BBI) + (\sA^{\top}-\BBI) + (\sA-\BBI),
\end{equation}
and combine the results of $\FA -\BBI$ yield the bounds for $\sA^{\top}\sA-\BBI $ with taking $C_{N,\kappa} =  C_N \frac{1+\kappa}{1-\kappa} \max \{\frac{1}{(1-\kappa)}, 1\}.$
\smallbreak

Let us consider the first order derivative term $\nabla_\xi \sA,$ it is not hard to see from definition of $\sA$
\begin{equation*}
\|\nabla_\xi \sA \|_{L_\infty(0,T; L_q(G))} \leq \frac{C_N}{(1-\kappa)^2}\|\nabla^2_\xi \Bu \|_{L_1(0,T; L_q(G))}.
\end{equation*}
Combine above estimate and \eqref{eq:AA-I}, we have
\begin{equation*}
\|\nabla_\xi (\sA^{\top}\sA -\BBI)\|_{L_\infty(0,T; L_q(G))}  \leq C_N \frac{1+\kappa}{(1-\kappa)^2} 
\|\nabla^2_\xi \Bu \|_{L_1(0,T; L_q(G))}.
\end{equation*}
Lastly, for the time derivative, we just take advantage the definition of $\sA,$
\begin{equation*}
\|\pa_t \sA\|_{L_{\widetilde{p}}(0,T; L_{\widetilde q}(G))} 
\leq \frac{ C_N }{(1-\kappa)^2} 
\|\nabla_\xi \Bu \|_{L_{\widetilde{p}}(0,T; L_{\widetilde q}(G))},\quad \hbox{for }\,\,\,\widetilde{p},\widetilde{q} \in [1,\infty].
\end{equation*}
\begin{equation*}
\|\pa_t\nabla_\xi \sA\|_{L_{\tp}(0,T; L_{\tq}(G))} 
\leq \frac{ C_N }{(1-\kappa)^2} 
\|\nabla_\xi^2 \Bu \|_{L_{\widetilde{p}}(0,T; L_{\widetilde q}(G))} 
+ \frac{ C_N }{(1-\kappa)^3}\|\nabla_\xi \Bu\|_{L_{\tp}(0,T; L_{\infty}(G))} 
\|\nabla_\xi^2 \Bu \|_{L_{1}(0,T; L_{\widetilde q}(G))}.
\end{equation*}
\end{proof}
\medskip

To study the stability problem, we need the following technical results similar to Lemma \ref{lem:A_DA}.
\begin{lemm}\label{lem:deltaA}
Assume that $\Bu_k$ with $k=1,2$ are some smooth enough vector field satisfying 
\begin{equation*}
\|(\nabla_\xi \Bu_1,\nabla_\xi \Bu_2) \|_{L_1(0,T;L_\infty(G))} \leq  \kappa <1.
\end{equation*}
Define the corresponding mapping  
$\BX_k(\xi,t) := \xi +\int^t_0 \Bu_k (\xi, \tau) \,d \tau$ 
associated to $\Bu_k,$ and $\sA_k := (\nabla_\xi \BX_k)^{-1}.$ 
For simplicity, $\FA_k$ stands for $\sA_k$ or $ \sA_k^{\top},$ and the notations on difference are given by
$(\delta\Bu, \delta \FA):= (\Bu_2-\Bu_1, \FA_2-\FA_1).$ 
Then following assertions hold true. 
\begin{enumerate}
\item For some terms of $\FA,$ there exists a constant $C_{N,\kappa}$ such that
\begin{equation*}
\|(\delta\FA, \FA_2\FA_2-\FA_1\FA_1,\FA_2\FA_2^{\top}-\FA_1\FA_1^{\top})\|_{L_\infty (G \times ]0,T[)} \leq C_{N,\kappa} \|\nabla_\xi \delta\Bu\|_{L_1(0,T;L_\infty(G))}, 
\end{equation*}
\item For the first order derivative terms of $\FA$ and $q \in ]1,\infty[,$ there exists a constant $C_{N,k}$ such that 
\begin{multline*}
\|\nabla_\xi (\delta \FA, \FA_2\FA_2-\FA_1\FA_1,\FA_2\FA_2^{\top}-\FA_1\FA_1^{\top})\|_{L_\infty (0,T; L_q(G))}
\leq C_{N,\kappa}\Big(\|\nabla_\xi^2 \delta\Bu\|_{L_1(0,T;L_q(G))}\\
+ \|\nabla_\xi^2 (\Bu_1,\Bu_2)\|_{L_1(0,T;L_q(G))}
\|\nabla_\xi \delta\Bu\|_{L_1(0,T;L_\infty(G))}\Big),
\end{multline*}
\item For the time derivative of $\FA,$ we have for any suitable $(\widetilde{p},\widetilde{q}) \in [1,\infty]^2$
\begin{equation*}
\|\pa_t \delta \FA\|_{L_{\widetilde{p}}(0,T; L_{\widetilde q}(G))}
 \leq C_{N,\kappa} \Big(\|\nabla_\xi \delta \Bu\|_{L_{\widetilde{p}}(0,T;L_{\widetilde q}(G))} 
  +\|\nabla_\xi (\Bu_1,\Bu_2)\|_{L_{\widetilde{p}}(0,T;L_{\widetilde q}(G))} 
 \|\nabla_\xi \delta\Bu\|_{L_1(0,T;L_\infty(G))}\Big),
\end{equation*}
\begin{multline*}
\|\pa_t \nabla_\xi \delta \sA\|_{L_{\widetilde{p}}(0,T; L_{\widetilde q}(G))} 
 \leq  C_{N,\kappa}\Big(  \|\nabla_\xi^2 \delta \Bu\|_{L_{\widetilde{p}}(0,T;L_{\widetilde q}(G))} 
 + \|\nabla_\xi^2 (\Bu_1,\Bu_2)\|_{L_1(0,T;L_{\widetilde q}(G))} 
 \|\nabla_\xi \delta\Bu\|_{L_{\widetilde{p}}(0,T;L_\infty(G))}\\
  + \|\nabla_\xi (\Bu_1,\Bu_2)\|_{L_{\tp}(0,T;L_\infty(G))} 
 \|\nabla_\xi^2 \delta\Bu\|_{L_{1}(0,T;L_{\tq}(G))}
    + \|\nabla_\xi^2 (\Bu_1,\Bu_2)\|_{L_{\tp}(0,T;L_{\tq}(G))} 
 \|\nabla_\xi \delta\Bu\|_{L_{1}(0,T;L_{\infty}(G))}\\
 +\|\nabla_\xi (\Bu_1,\Bu_2)\|_{L_{\tp}(0,T;L_{\infty}(G))}
   \|\nabla_\xi^2 (\Bu_1,\Bu_2)\|_{L_{1}(0,T;L_{\tq}(G))}
   \|\nabla_\xi \delta\Bu\|_{L_{1}(0,T;L_{\infty}(G))} \Big).
\end{multline*}
\end{enumerate}
Above all constants  $C_{N,\kappa}$ go to $\infty$ as $\kappa$ tends to $1.$ 
\end{lemm}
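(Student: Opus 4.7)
The lemma is the "difference" analogue of Lemma \ref{lem:A_DA}, and the plan is to reduce every estimate to that lemma together with one master identity. Since $\FA_k$ denotes $\sA_k$ or $\sA_k^{\top}$ and transposition preserves all norms used, it suffices to treat $\sA_k$. Writing $M_k := \nabla_\xi \BX_k = I + \int_0^t \nabla_\xi \Bu_k\,dt'$ and using $\sA_k M_k = M_k \sA_k = I$, a one-line resolvent computation gives the master identity
\begin{equation}\label{eq:master}
\delta \sA := \sA_2 - \sA_1 = \sA_2 (M_1 - M_2) \sA_1 = -\sA_2 \Bigl(\int_0^t \nabla_\xi \delta \Bu \, dt'\Bigr) \sA_1,
\end{equation}
which is the central tool throughout the proof.

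For the $L_\infty$ bounds I will plug \eqref{eq:master} into the $L_\infty$-estimate of $\sA_k$ from Lemma \ref{lem:A_DA}(1). The products $\FA_2\FA_2 - \FA_1\FA_1$ and $\FA_2\FA_2^{\top} - \FA_1\FA_1^{\top}$ are then handled by the elementary telescoping
\begin{equation*}
AB - CD = \delta\FA \cdot \FA_2 + \FA_1 \cdot \delta\FA^{(\top)},
\end{equation*}
combined with the uniform $L_\infty$ control of $\FA_k$ from Lemma \ref{lem:A_DA}. For the first-order spatial derivative, I will differentiate \eqref{eq:master} directly to get
\begin{equation*}
\nabla_\xi \delta\sA = -(\nabla_\xi \sA_2)\Bigl(\int_0^t \nabla_\xi \delta \Bu\Bigr)\sA_1 - \sA_2\Bigl(\int_0^t \nabla_\xi^2 \delta \Bu\Bigr)\sA_1 - \sA_2\Bigl(\int_0^t \nabla_\xi \delta \Bu\Bigr)(\nabla_\xi \sA_1).
\end{equation*}
The outer-gradient terms are controlled by $\|\nabla_\xi^2 \Bu_k\|_{L_1(L_q)} \|\nabla_\xi \delta\Bu\|_{L_1(L_\infty)}$ via Lemma \ref{lem:A_DA}(2), and the middle term contributes $\|\nabla_\xi^2 \delta\Bu\|_{L_1(L_q)}$. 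The corresponding bounds for products follow again by the telescoping identity applied to $\nabla_\xi$ of the products.

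The time-derivative estimates rest on the identity $\pa_t \sA_k = -\sA_k (\nabla_\xi \Bu_k)\sA_k$, coming from $\pa_t M_k^{-1} = -M_k^{-1}(\pa_t M_k)M_k^{-1}$. A short algebraic rearrangement (adding and subtracting $\sA_2(\nabla_\xi \Bu_1)\sA_2$) gives
\begin{equation*}
\pa_t \delta\sA = -\sA_2 (\nabla_\xi \delta\Bu)\sA_2 - \delta\sA\,(\nabla_\xi \Bu_1)\sA_2 - \sA_1(\nabla_\xi \Bu_1)\,\delta\sA,
\end{equation*}
and the two difference factors on the right (either $\nabla_\xi \delta\Bu$ or $\delta\sA$, the latter estimated through \eqref{eq:master}) yield the announced bound with the stated $(\tp,\tq)$ exponents, using Hölder in time/space.

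The main obstacle is the last estimate, on $\pa_t \nabla_\xi \delta\sA$, where I expect the book-keeping to be heaviest. The plan is to differentiate the above formula for $\pa_t \delta\sA$ once more in $\xi$, producing a finite sum of terms each of which contains exactly one "difference factor" ($\delta\sA$, $\nabla_\xi \delta\sA$, $\nabla_\xi \delta\Bu$, or $\nabla_\xi^2 \delta\Bu$) and a product of the "base" quantities $\sA_k$, $\nabla_\xi \sA_k$, $\nabla_\xi \Bu_k$, $\nabla_\xi^2 \Bu_k$. Each term is then estimated using Hölder in time with splits $\tp = \tp + \infty$ or $\tp = 1 + \tp$ as appropriate, the $L_\infty$- and $L_q$-bounds on $\sA_k$ and $\nabla_\xi \sA_k$ from Lemma \ref{lem:A_DA}, and the already-proved bounds on $\delta\sA$ and $\nabla_\xi \delta\sA$. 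Matching term by term reproduces exactly the four contributions on the right-hand side of the stated inequality; no further ideas are required beyond careful accounting.
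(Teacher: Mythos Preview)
Your proof is correct and follows the same overall strategy as the paper: express $\delta\sA$ in a form where a single factor $\int_0^t\nabla_\xi\delta\Bu$ is isolated, handle the quadratic quantities by the telescoping $\FA_2\FA_2^{(\top)}-\FA_1\FA_1^{(\top)}=\delta\FA\,\FA_2^{(\top)}+\FA_1\,\delta\FA^{(\top)}$, and then differentiate. The only noteworthy difference is in the master formula itself. The paper uses the Neumann-series difference identity (cited from \cite{DanM2012})
\[
\delta\sA(t)=\Bigl(\int_0^t\nabla_\xi\delta\Bu\,d\tau\Bigr)\sum_{\ell\geq 1}(-1)^{\ell}\sum_{j=0}^{\ell-1}C_1(t)^{j}C_2(t)^{\ell-1-j},\qquad C_k(t):=\int_0^t\nabla_\xi\Bu_k\,d\tau,
\]
and then applies $\nabla_\xi$, $\pa_t$, $\pa_t\nabla_\xi$ to this series. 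Your resolvent identity $\delta\sA=-\sA_2\bigl(\int_0^t\nabla_\xi\delta\Bu\bigr)\sA_1$ is the closed-form sum of that series and is cleaner to differentiate, since each application of $\nabla_\xi$ or $\pa_t$ hits one of only three factors rather than an infinite sum; the resulting bounds are identical because Lemma~\ref{lem:A_DA} already controls $\sA_k$, $\nabla_\xi\sA_k$, $\pa_t\sA_k$ in the needed norms. One minor slip: the final inequality for $\pa_t\nabla_\xi\delta\sA$ has five terms on the right, not four, but your described procedure (differentiate $\pa_t\delta\sA=-\sA_2(\nabla_\xi\delta\Bu)\sA_2-\delta\sA(\nabla_\xi\Bu_1)\sA_2-\sA_1(\nabla_\xi\Bu_1)\delta\sA$ in $\xi$ and apply H\"older) does generate all of them; the fifth, triple-product term arises when $\nabla_\xi$ lands on a $\sA_k$ in the last two summands and $\delta\sA$ is then bounded via the master identity.
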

\begin{proof}
For simplicity, we only concentrate on the case $\FA_k = \sA_k$ for $k=1,2.$ The proof is based on the following expression of $\delta \sA$ (see \cite[Appendix]{DanM2012} for instance),
\begin{equation}\label{eq:deltaA}
\delta\sA(t) = \big(\int_0^t \nabla_\xi\delta\Bu \,d\tau\big) 
\sum_{\ell \geq 1}(-1)^{\ell} \sum_{j=0}^{\ell-1}C_1(t)^jC_2^{\ell-1-j}(t),
\end{equation}
where $C_k (t):=\int_0^t \nabla_\xi \Bu_k \, d \tau$ for $k=1,2.$ Thus our assumption $\kappa<1$ and \eqref{eq:deltaA} imply the bound 
\begin{equation}\label{es:deltaFA}
\|\delta\sA\|_{L_\infty (G \times ]0,T[)} 
\leq \frac{C_N}{(1-\kappa)^2}
 \|\nabla_\xi \delta\Bu\|_{L_1(0,T;L_\infty(G))}. 
\end{equation}
To complete the proof of the first part, without loss of generality, let us only consider the difference 
\begin{equation}\label{eq:AA-AA}
\sA_2\sA_2^{\top} -\sA_1\sA_1^{\top}
 = \delta \sA \sA_2^{\top} + \sA_1 \delta \sA^{\top}.
\end{equation}
Then Lemma \ref{lem:A_DA} (1) and \eqref{es:deltaFA} yield
\begin{equation*}
\|\sA_2\sA_2^{\top} -\sA_1\sA_1^{\top}\|_{L_\infty (G \times ]0,T[)} 
\leq \frac{C_N}{(1-\kappa)^3} 
\|\nabla_\xi \delta \Bu\|_{L_1(0,T;L_\infty(G))}. 
\end{equation*}
\medskip

Now, the proof of Part (2) also relies on the formulas \eqref{eq:deltaA} and \eqref{eq:AA-AA}. For instance, we have  
\begin{align}\label{es:DdeltaA}
\|\nabla_\xi \delta\sA(t)\|_{L_\infty(0,T;L_q(G))} 
&\leq \frac{C_N}{(1-\kappa)^2}\|\nabla_\xi^2 \delta\Bu\|_{L_1(0,T;L_q(G))}\\
& \quad  +\frac{C_N}{(1-\kappa)^3} \|\nabla_\xi^2 (\Bu_1,\Bu_2)\|_{L_1(0,T;L_q(G))}
\|\nabla_\xi \delta\Bu\|_{L_1(0,T;L_\infty(G))}. \nonumber
\end{align}
Then combining above inequalities \eqref{es:deltaFA}, \eqref{es:DdeltaA} and Lemma \ref{lem:A_DA} yields
\begin{align*}
\|\nabla_\xi (\sA_2\sA_2^{\top} -\sA_1\sA_1^{\top})\|_{L_\infty(0,T;L_q(G))} 
&\leq \frac{C_N}{(1-\kappa)^3}\|\nabla_\xi^2 \delta\Bu\|_{L_1(0,T;L_q(G))}\\
&\quad +\frac{C_N}{(1-\kappa)^4} \|\nabla_\xi^2 (\Bu_1,\Bu_2)\|_{L_1(0,T;L_q(G))}
\|\nabla_\xi \delta\Bu\|_{L_1(0,T;L_\infty(G))}.
\end{align*}
The proof for other terms in Part (2) are similar and hence it remains to study $\pa_t \sA$ in Part (3).

Finally, it is not hard to see that 
\begin{align*}
\|\pa_t \delta \sA\|_{L_{\widetilde{p}}(0,T; L_{\widetilde q}(G))}
&\leq \frac{C_N}{(1-\kappa)^2} \|\nabla_\xi \delta \Bu\|_{L_{\widetilde{p}}(0,T;L_{\widetilde q}(G))} \\
 &\quad + \frac{C_N}{(1-\kappa)^3} 
 \|\nabla_\xi (\Bu_1,\Bu_2)\|_{L_{\widetilde{p}}(0,T;L_{\widetilde q}(G))} 
 \|\nabla_\xi \delta\Bu\|_{L_1(0,T;L_\infty(G))}.
\end{align*}
Apply the operator $\pa_t \nabla_\xi$ to \eqref{eq:deltaA} and we have
\begin{align*}
\|\pa_t \nabla_\xi \delta \sA\|_{L_{\widetilde{p}}(0,T; L_{\widetilde q}(G))} 
& \leq  \frac{C_N}{(1-\kappa)^2} \|\nabla_\xi^2 \delta \Bu\|_{L_{\widetilde{p}}(0,T;L_{\widetilde q}(G))} \\
 &  + \frac{C_N}{(1-\kappa)^3} \Big(
 \|\nabla_\xi^2 (\Bu_1,\Bu_2)\|_{L_1(0,T;L_{\widetilde q}(G))} 
 \|\nabla_\xi \delta\Bu\|_{L_{\widetilde{p}}(0,T;L_\infty(G))}\\
 &\quad \quad\quad \quad\quad\quad
    + \|\nabla_\xi (\Bu_1,\Bu_2)\|_{L_{\tp}(0,T;L_\infty(G))} 
 \|\nabla_\xi^2 \delta\Bu\|_{L_{1}(0,T;L_{\tq}(G))}\\
& \quad \quad\quad \quad\quad\quad
    + \|\nabla_\xi^2 (\Bu_1,\Bu_2)\|_{L_{\tp}(0,T;L_{\tq}(G))} 
 \|\nabla_\xi \delta\Bu\|_{L_{1}(0,T;L_{\infty}(G))}\Big)\\
 &+ \frac{C_N}{(1-\kappa)^4} 
   \|\nabla_\xi (\Bu_1,\Bu_2)\|_{L_{\tp}(0,T;L_{\infty}(G))}
   \|\nabla_\xi^2 (\Bu_1,\Bu_2)\|_{L_{1}(0,T;L_{\tq}(G))}
   \|\nabla_\xi \delta\Bu\|_{L_{1}(0,T;L_{\infty}(G))},
\end{align*}
which completes our proof of Lemma \ref{lem:deltaA}.
\end{proof}
\medskip

To handle the free boundary condition in Lagrangian coordinate, we need the following estimates.
\begin{lemm}\label{lem:normal}
Assume that $G$ is a uniform $W^{2-1\slash q}_q$ connected domain in $\BBR^N$ and $\Bn$ is the unit normal along some boundary $\Gamma \subset \pa G.$ With the same conventions on $(q,\tp,\tq,\Bu,\Bu_k,\BX,\BX_k, \sA,\sA_k)$ $(k=1,2)$  as in Lemma \ref{lem:A_DA} and Lemma \ref{lem:deltaA}, we define 
\begin{equation*}
(\oBn, \oBn_k)(\xi,t) := \Big(\frac{\sA \Bn}{|\sA \Bn|}, \frac{\sA_k \Bn}{|\sA_k \Bn|} \Big)(\xi,t) \quad \hbox{for any}\,\,
(\xi,t) \in \Gamma \times ]0,T[.
\end{equation*}
For simplicity, we use $\oFn$ for any element in $\{ \oBn, \oBn_1, \oBn_2\}.$ 
Then the following assertions hold true as long as $0<\kappa \ll 1.$
\begin{enumerate}
\item  $\Bn$ and $\oFn$ can be extended into $W^{1}_q(G)^N.$ Moreover, there is some constant $C_{N,\kappa}$ such that 
\begin{equation}\label{es:n}
\|\Bn\|_{L_\infty(G) \cap W^{1}_q(G)} 
+ \|\oFn\|_{L_\infty (0,T;L_\infty(G) \cap W^{1}_q(G))}\leq C_{N,\kappa}.
\end{equation}
\item If we take  $\FA$ in $\{ \sA , \sA_1,\sA_2 \}$  with respect to the corresponding  $\oFn$ as above, then there exists a constant $C$ such that
\begin{align*}
\|\oFn-\Bn\|_{L_\infty(G\times ]0,T[)} & \leq C_{N,\kappa}
 \|\FA-\BBI\|_{L_\infty(G\times ]0,T[)},\\
 \| \nabla_\xi (\oFn-\Bn)\|_{L_\infty(0,T;L_q(G))} & \leq C_{N,\kappa}\big(
 \| \nabla_\xi \FA\|_{L_\infty(0,T;L_q(G))} + \|\FA-\BBI\|_{L_\infty(G\times ]0,T[)} \big),\\
  \| \pa_t (\oFn-\Bn)\|_{L_{\tp}(0,T;L_{\tq}(G))} & \leq C_{N,\kappa}
  \| \pa_t \FA\|_{L_{\tp}(0,T;L_{\tq}(G))}, \\
   \| \pa_t \nabla_\xi (\oFn-\Bn)\|_{L_{p}(0,T;L_{q}(G))} &
    \leq C_{N,\kappa} \big(
 \|\pa_t \nabla_\xi \FA\|_{L_{p}(0,T;L_{q}(G))} 
  +\|\pa_t \FA\|_{L_{p}(0,T;L_{\infty}(G))} \\ 
 &\hspace{1.5cm} + \|\pa_t \FA\|_{L_{p}(0,T;L_{\infty}(G))} 
  \|\nabla_\xi \FA\|_{L_{\infty}(0,T;L_{q}(G))}  \big). 
\end{align*}
\item Set $\delta \oBn := \oBn_2 -\oBn_1$ and recall $\delta \sA := \sA_2- \sA_1.$ Then there exists a constant $C$ such that
\begin{align*}
\|\delta \oBn\|_{L_\infty(G\times ]0,T[)} & \leq C_{N,\kappa}
      \|\delta \sA\|_{L_\infty(G\times ]0,T[)},\\
\| \nabla_\xi \delta \oBn\|_{L_\infty(0,T;L_q(G))} & \leq C_{N,\kappa}\Big(
       \| \nabla_\xi \delta \sA\|_{L_\infty(0,T;L_q(G))}
       + \|\delta \sA\|_{L_\infty(G\times ]0,T[)} A_{\infty,q}(T)\Big),\\
\| \pa_t \delta \oBn\|_{L_{\tp}(0,T;L_{\tq}(G))} & \leq C_{N,\kappa}\Big(
          \| \pa_t \delta \sA\|_{L_{\tp}(0,T;L_{\tq}(G))} 
              + \|\delta \sA\|_{L_\infty(G\times ]0,T[)} 
            B_{\tp,\tq}(T)\Big), \\
\| \pa_t \nabla_\xi \delta \oBn\|_{L_{p}(0,T;L_{q}(G))} & \leq C_{N,\kappa}\Big(
             \|\pa_t \nabla_\xi \delta \sA\|_{L_{p}(0,T;L_{q}(G))} 
             +\|\pa_t \delta \sA\|_{L_{p}(0,T;L_{\infty}(G))}A_{\infty,q}(T) \\
               &\hspace{1cm}  
              +\|\nabla_\xi \delta \sA\|_{L_{\infty}(0,T;L_{q}(G))} B_{p,\infty}(T)
              +\|\delta \sA\|_{L_{\infty}(G\times ]0,T[)} C_{p,q}(T) \Big), 
\end{align*}
where constants $A_{p,q}(T),$ $B_{p,q}(T)$ and $C_{p,q}(T)$ are given by
\begin{align*}
A_{p,q}(T)&:=1+ \| \nabla_\xi (\sA_1,\sA_2)\|_{L_p(0,T;L_q(G))}, \\
B_{p,q}(T)&:=\|\pa_t (\sA_1,\sA_2)\|_{L_{p}(0,T;L_{q}(G))},\\
C_{p,q}(T)&:=\|\pa_t \nabla_\xi (\sA_1,\sA_2)\|_{L_{p}(0,T;L_{q}(G))} 
             +  A_{\infty,q}(T)\, B_{p,\infty}(T).
\end{align*}
\end{enumerate}
\end{lemm}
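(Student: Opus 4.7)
The plan is to treat all three parts uniformly by regarding $\oFn$ as a smooth nonlinear function of $\sA$ (evaluated at a suitable extension of $\Bn$), so every bound ultimately reduces, via chain- and product-rule expansions, to the estimates for $\sA$ supplied by Lemmas \ref{lem:A_DA} and \ref{lem:deltaA}. First, for Part (1), since $G$ is uniform $W^{2-1/q}_q$, the unit normal $\Bn$ along $\Gamma\subset\pa G$ belongs to $W^{1-1/q}_q(\Gamma)$ locally and extends (via the chart functions $h\in W^{2-1/q}_q$, then globally through a uniform partition of unity) to an element of $W^1_q(G)^N\cap L_\infty(G)^N$, still denoted $\Bn$. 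With this extension we define $\oFn(\xi,t):=\sA(\xi,t)\Bn(\xi)/|\sA(\xi,t)\Bn(\xi)|$ on all of $G\times]0,T[$. By Lemma \ref{lem:A_DA}(1) the smallness of $\kappa$ ensures $|\sA\Bn|\geq 1/2$, so $1/|\sA\Bn|$ is smooth and uniformly bounded; combining this with Lemma \ref{lem:A_DA}(1)--(2) and the fact that $W^1_q(G)$ is stable under multiplication by uniformly bounded $W^1_q$ functions yields \eqref{es:n} for $\oFn$.

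For Part (2), the key algebraic identity is
$$\oFn-\Bn = \frac{(\sA-\BBI)\Bn}{|\sA\Bn|} - \frac{|\sA\Bn|-|\Bn|}{|\sA\Bn|}\,\Bn,$$
together with the Lipschitz bound $\bigl||\sA\Bn|-|\Bn|\bigr|\leq|(\sA-\BBI)\Bn|$. This immediately yields the $L_\infty$ estimate in terms of $\|\sA-\BBI\|_{L_\infty}$. The remaining three bounds follow by applying $\nabla_\xi$, $\pa_t$, and $\pa_t\nabla_\xi$ to this identity: each differentiation produces, by the product/chain rule, a sum of terms of the form (derivative of $\sA$)$\times$(bounded factor in $\sA$ and $\Bn$), or $(\sA-\BBI)\times$(derivative of $\Bn$)$\times$(bounded factor), matched directly against the stated right-hand sides. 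In the mixed term $\pa_t\nabla_\xi(\oFn-\Bn)$, the three claimed groups correspond respectively to: (i) both derivatives landing on $\sA$ in the numerator, (ii) one of the derivatives falling on $1/|\sA\Bn|$, and (iii) one derivative in $t$ falling on $\sA$ while one derivative in $\xi$ falls on the extended $\Bn$, the last of which hides $\|\nabla_\xi\Bn\|_{L_q}$ in the constant via \eqref{es:n}.

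For Part (3), introduce the smooth map $F:A\mapsto A\Bn/|A\Bn|$, defined on the ball around $\BBI$ where $|A\Bn|\geq 1/2$. Then $\delta\oBn = F(\sA_2)-F(\sA_1)$, and the fundamental theorem of calculus gives
$$\delta\oBn = \Bigl(\int_0^1 DF\bigl(\sA_1 + s\,\delta\sA\bigr)\,ds\Bigr)\cdot\delta\sA,$$
with the integrand uniformly bounded in $L_\infty(G\times]0,T[)$; this yields the $L_\infty$ estimate for $\delta\oBn$. Differentiating this representation in $\xi$, $t$, or $\pa_t\nabla_\xi$ and expanding by product/chain rule, the derivatives of the integrand factor through $\nabla_\xi(\sA_1,\sA_2)$, $\pa_t(\sA_1,\sA_2)$, $\pa_t\nabla_\xi(\sA_1,\sA_2)$ and $\nabla_\xi\Bn$: the leading contribution in each case is the corresponding derivative of $\delta\sA$, while the correction terms are exactly those collected into $A_{p,q}(T)$, $B_{p,q}(T)$, $C_{p,q}(T)$. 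Each factor is then bounded by Lemma \ref{lem:A_DA} for $\sA_1,\sA_2$ and Lemma \ref{lem:deltaA} for $\delta\sA$.

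The main obstacle is not conceptual but combinatorial: the mixed $\pa_t\nabla_\xi$ estimates in Parts (2) and (3) require one to enumerate all terms produced by two differentiations of a quotient built from three factors (the numerator $\sA$ or $\delta\sA$, the denominator $|\sA\Bn|^{-1}$, and the extended $\Bn$), and to distribute the $L_p$, $L_q$, $L_\infty$ norms consistently so that every resulting piece fits into the exact structure of the stated right-hand sides, in particular matching the somewhat asymmetric combinations $A_{\infty,q}(T)\,B_{p,\infty}(T)$, etc. The role of the smallness assumption $\kappa\ll 1$ is precisely to make the denominator $|\sA\Bn|$ and its derivatives harmless throughout these expansions, so that the reciprocal factor contributes only uniformly bounded coefficients.
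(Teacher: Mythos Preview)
Your proposal is correct and follows essentially the same approach as the paper. For Part~(2) you use the same algebraic decomposition $\oFn-\Bn=(\sA-\BBI)\Bn/|\sA\Bn|-(|\sA\Bn|-1)\Bn/|\sA\Bn|$ that the paper writes (the paper additionally rationalises $|\sA\Bn|-1$ into $(2\sB\Bn\cdot\Bn+|\sB\Bn|^2)/(1+|\sA\Bn|)$, but this is only a cosmetic step). For Part~(3) you take a slightly more abstract route, writing $\delta\oBn=\int_0^1 DF(\sA_1+s\,\delta\sA)\,ds\cdot\delta\sA$ with $F(A)=A\Bn/|A\Bn|$, whereas the paper writes out the explicit telescoping identity
\[
\delta\oBn=\frac{\delta\sA\,\Bn}{|\sA_2\Bn|}-\frac{(\delta\sA\,\Bn)\cdot(\sA_2\Bn)+(\sA_1\Bn)\cdot(\delta\sA\,\Bn)}{|\sA_1\Bn|\,|\sA_2\Bn|\,(|\sA_1\Bn|+|\sA_2\Bn|)}\,\sA_1\Bn;
\]
both representations express $\delta\oBn$ linearly in $\delta\sA$ with coefficients bounded in terms of $\sA_1,\sA_2$, and the subsequent differentiation and bookkeeping are identical in substance.
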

\begin{proof}
The proof of \eqref{es:n} is standard and let us verify the second part.
Without loss of generality, we just treat the pair $(\oBn,\sA)$ here and set $\sB:=\sA-\BBI$. Then the difference $\oBn-\Bn$ is formulated as follows,
\begin{equation}\label{eq:n-n}
\oBn-\Bn = \frac{ \sB \Bn}{|\sA \Bn|} 
             +\Big(\frac{1}{|\sA \Bn|}-1\Big)\Bn 
=\frac{ \sB\Bn}{|\sA \Bn|}-
\frac{2\sB\Bn \cdot \Bn+|\sB \Bn|^2}{|\sA \Bn| (1+|\sA \Bn|)} \Bn
\end{equation}
\smallbreak 

Firstly, we know from \eqref{es:n} that
\begin{equation}\label{es:Bn}
\|\sB \Bn\|_{L_\infty(G\times]0,T[)} 
\leq C_{N,\kappa} \|\sB\|_{L_\infty(G\times]0,T[)},
\end{equation} 
which also yields for $\kappa$ small enough,
\begin{equation}\label{es:An}
 0<c_{_{N,\kappa}} \leq  |\Bn|- |\sB\Bn| \leq |\sA \Bn|
 \leq \|\sA\Bn\|_{L_\infty(G\times]0,T[)} \leq C_{N,\kappa}.
\end{equation}
Hence, keeping \eqref{es:Bn} and \eqref{es:An} in mind,  Lemma \ref{lem:A_DA} yields our result for $\|\oBn-\Bn\|_{L_\infty(G\times ]0,T[)}.$
\smallbreak 

To bound $\nabla_\xi (\oBn-\Bn),$ we first note by \eqref{es:n}
\begin{equation}\label{es:DBn}
\|\nabla_\xi(\sB\Bn)\|_{L_\infty(0,T;L_q(G))} 
 \leq C_{N,\kappa}\big( \|\nabla_\xi \sA\|_{L_\infty(0,T;L_q(G))}+\|\sB\|_{L_\infty(G\times]0,T[)}\big).
\end{equation}
Then \eqref{es:DBn} and \eqref{es:n}, together with Lemma \ref{lem:A_DA}, imply that
\begin{equation}\label{es:DAn}
\|\nabla_\xi(\sA\Bn)\|_{L_\infty(0,T;L_q(G))} \leq C_{N,\kappa}\big( 1+ \|\nabla_\xi \sA\|_{L_\infty(0,T;L_q(G))}\big). 
\end{equation}
Thus based on \eqref{es:DBn} and \eqref{es:DAn},   direct computations yield the bound of  $\nabla_\xi (\oBn-\Bn).$
\smallbreak 

Next, the bound of $\pa_t (\oBn-\Bn)$ is immediate from the following inequality,
\begin{equation}\label{es:DtAnBn}
\|\pa_t (\sA\Bn,\sB\Bn)\|_{L_{\tp}(0,T;L_{\tq}(G))} \leq C_{N,\kappa} \|\pa_t \sA\|_{L_{\tp}(0,T;L_{\tq}(G))}. 
\end{equation}
\smallbreak

Now, combining \eqref{es:n} and  above inequality \eqref{es:DtAnBn} implies
  \begin{equation}\label{es:DtDAnBn}
\|\pa_t \nabla_\xi (\sA\Bn,\sB\Bn)\|_{L_{p}(0,T;L_{q}(G))} 
\leq C_{N,\kappa} \big( \|\pa_t \nabla_\xi \sA\|_{L_{p}(0,T;L_{q}(G))}
+ \|\pa_t\sA\|_{L_{p}(0,T;L_{\infty}(G))} \big). 
\end{equation}
Therefore we can conclude the bound of $\pa_t \nabla_\xi (\oFn-\Bn)$ by putting \eqref{es:Bn} - \eqref{es:DtDAnBn} together.
\medskip

In fact, the proof of last part is similar to the previous step and is based on the following expression,
\begin{align*}
\delta \oBn &= \frac{\delta\sA\Bn}{|\sA_2 \Bn|} 
             +\Big(\frac{1}{|\sA_2 \Bn|}-\frac{1}{|\sA_1 \Bn|}\Big)\sA_1\Bn \\
&=\frac{\delta\sA\Bn}{|\sA_2 \Bn|}-
\frac{ (\delta \sA \Bn)\cdot (\sA_2 \Bn)+(\sA_1\Bn)\cdot (\delta\sA \Bn)}{|\sA_1 \Bn| |\sA_2 \Bn|(|\sA_1 \Bn|+|\sA_2 \Bn|)} \sA_1\Bn.
\end{align*}
The details are left to the readers.
\end{proof}

\section{Some interpolation property}\label{appendix:interpolation}
For convenience, let us recall the definition of Stokes operator $\CA_q$ with $1<q< \infty$ in Section \ref{sec:intro}. 
We say some vector field $\Bu$ satisfies the two phase compatibility condition in $\dOm$ (either $\Gamma_+$ or $\Gamma_-$ could be $\emptyset$ ) if $\Bu$ enjoys 
\begin{equation}\label{cdt:compt}
\jump{\Bu}|_{\Ga}=\jump{\CT_{\Bn} \big(\mu \BBD (\Bu) \Bn\big)}|_{\Ga}  =  \0, \,\,\,
\CT_{\Bn_+} \big(\mu \BBD (\Bu) \Bn_+\big)|_{\Ga_+}
 =\0   
\quad \hbox {and }\quad \Bu|_{\Ga_-}=0,
\end{equation}
where the projection operator 
\begin{equation*}
\CT_{\Bnu}\Bh := \Bh-(\Bh\cdot \Bnu)\Bnu,
\end{equation*}
for any vector $\Bnu$ and $\Bh$ defined along some surface.
Then $\CA_q$ is given by
\begin{equation*}
\CA_q \Bu := \eta^{-1} \Di \BBT\big(\Bu,K(\Bu)\big), \quad \hbox{for any} \,\,\, \Bu \in 
\CD (\CA_q) := \big\{\Bu \in W^{2}_q (\dOm)^N \cap J_q (\dOm) :  \Bu\,\,\, \hbox{satisfies}\,\,\, \eqref{cdt:compt} \big\}.
\end{equation*}
Now, set the real interpolation space
$\CD^{2\theta}_{q,p}(\dOm) := \big(J_q (\dOm), \CD(\CA_q) \big)_{\theta,p}
$ for $0<\theta<1,$ $1<q<\infty$ and $1\leq p\leq \infty.$
In fact we have ,
\begin{equation*}
\CD^{2-2\slash p}_{q,p}(\dOm)  := 
\left\{\begin{aligned}
				\big\{ \Bu \in B^{2-2\slash p}_{q,p}(\dOm) \cap J_q(\dOm): \Bu \quad \hbox{satisfies} \quad \eqref{cdt:compt} \big\} 
			 &\quad\mbox{if} \quad 2-2\slash p > 1+1\slash q,\\
			\big\{ \Bu \in B^{2-2\slash p}_{q,p}(\dOm) \cap J_q(\dOm): \Bu|_{\Ga_-}=0 \big\} 
			&\quad\mbox{if} \quad 1\slash q<2-2\slash p< 1+1\slash q, \\
		B^{2-2\slash p}_{q,p}(\dOm) \cap J_q(\dOm) &\quad\mbox{if} \quad 0<2-2\slash p < 1\slash q.
	\end{aligned}\right.
\end{equation*}
\smallbreak

From now on, we write some (one or two phase) Stokes operator defined in some uniform $W^{2-1\slash r}_r$ ($N<r<\infty$) domain $G$ by $A_q$ for simplicity.  Here $A_q$ may be associated to some suitable compatibility boundary conditions like \eqref{cdt:compt} (see \cite{Shi2015} for one phase case), and 
$\CD^{2-2\slash p}_{q,p}(G) := \big(J_q (G), \CD(A_q) \big)_{1- 1\slash p,p}$ for some $1<p,q<\infty.$  
Then the following general interpolation properties hold true
 (see \cite{DanZhp2014} for the case $G=\BBR^N_+$).
\begin{prop}\label{prop_uDu}
Assume that $(\theta,\beta, q,p)$ satisfies the following conditions
\begin{equation*}
(\theta,\beta, q,p) \in ]0,1[\times ]1,\infty]\times ]1,\infty[\times]1,\infty[, 
\quad \beta \geq q \quad \mbox{and} \quad 
 1-\frac{2(1-\theta)}{p} = \frac{N}{q}-\frac{N}{\beta}.
\end{equation*} 
Then the following inequality holds true,
\begin{equation}\label{es:interpolation}
\|\nabla \Bu\|_{L_\beta(G)}  \lesssim \|\Bu\|_{\CD^{2-2\slash p}_{q,p}(G)}^{1-\theta} \|\Bu\|_{\CD(A_q)}^{\theta}.
\end{equation}
\end{prop}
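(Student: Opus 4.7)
The inequality \eqref{es:interpolation} is a Gagliardo--Nirenberg type interpolation estimate, and my plan is to reduce it to a standard interpolation inequality on the Besov--Sobolev scale by first discarding the extra structure (divergence-free and boundary compatibility) built into the Stokes operator domains. Indeed, the trivial norm inequalities
$$\|\Bu\|_{B^{2-2\slash p}_{q,p}(G)} \leq C \|\Bu\|_{\CD^{2-2\slash p}_{q,p}(G)}, \qquad \|\Bu\|_{W^2_q(G)} \leq C \|\Bu\|_{\CD(A_q)}$$
follow directly from the definitions -- the domain spaces only add divergence-free and boundary compatibility conditions on top of the underlying Besov and Sobolev norms. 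Thus it suffices to establish
$$\|\nabla \Bu\|_{L_\beta(G)} \lesssim \|\Bu\|_{B^{2-2\slash p}_{q,p}(G)}^{1-\theta}\,\|\Bu\|_{W^2_q(G)}^{\theta}$$
for any $\Bu$ lying in both spaces.

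Next I would combine real interpolation with a critical Sobolev embedding. Using the continuous inclusion $W^2_q(G) \hookrightarrow B^2_{q,\infty}(G)$ together with the standard Besov real-interpolation identity
$$\bigl(B^{2-2\slash p}_{q,p}(G),\; B^2_{q,\infty}(G)\bigr)_{\theta,1} = B^{s_\theta}_{q,1}(G), \qquad s_\theta := 2 - \frac{2(1-\theta)}{p},$$
and the fact that every real interpolation space is of class $J(\theta)$, I obtain
$$\|\Bu\|_{B^{s_\theta}_{q,1}(G)} \lesssim \|\Bu\|_{B^{2-2\slash p}_{q,p}(G)}^{1-\theta}\,\|\Bu\|_{W^2_q(G)}^{\theta}.$$
The critical Sobolev--Besov embedding $B^{s_\theta}_{q,1}(G) \hookrightarrow W^1_\beta(G)$ is then available precisely when $s_\theta - 1 = N\slash q - N\slash \beta$, which is exactly the scaling hypothesis of the proposition. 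The fine index $1$ on the Besov side is essential here, since it is what makes the endpoint Sobolev embedding valid.

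The main obstacle is that both the interpolation identity and the critical Besov embedding are classical on $\BBR^N$ via Littlewood--Paley theory (cf.\ Bergh--L\"ofstr\"om or Triebel), but must be transferred to the uniform $W^{2-1\slash r}_r$ domain $G$. Since the assumption $r > N$ forces $\pa G$ to be locally the graph of a $W^{2-1\slash r}_r$ function, which by Sobolev embedding is of class $\CC^{1,\alpha}$ for some $\alpha > 0$, bounded extension operators $G \to \BBR^N$ exist for all the Besov and Sobolev spaces involved. Consequently the transfer is routine, and the argument follows the same pattern as in the half-space case treated in \cite{DanZhp2014}.
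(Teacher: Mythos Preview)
Your proposal is correct and follows essentially the same route as the paper: real interpolation to reach the intermediate Besov space $B^{s_\theta}_{q,1}(G)$ with $s_\theta = 2 - 2(1-\theta)/p$, followed by the endpoint Sobolev embedding made available by the fine index $1$. The only cosmetic difference is that the paper applies the Reiteration Theorem directly on the abstract Stokes scale $\big(\CD^{2-2/p}_{q,p}(G),\CD(A_q)\big)_{\theta,r} = \big(J_q(G),\CD(A_q)\big)_{1-(1-\theta)/p,\,r}$ and only then passes to Besov norms, whereas you discard the Stokes structure first and interpolate within the plain Besov scale; the two are equivalent once one notes $\CD^{2\alpha}_{q,r}(G) \hookrightarrow B^{2\alpha}_{q,r}(G)$.
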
 

Consider smooth enough $\Bu$ and take $L_{p \slash \theta}(0,t)$ on both sides of \eqref{es:interpolation}, 
\begin{equation*}
\|\nabla \Bu\|_{L_{p\slash \theta} (0,T;L_\beta(G))}  \lesssim \|\Bu\|_{L_{\infty} \big(0,T; \CD^{2-2\slash p}_{q,p}(G) \big)}^{1-\theta} \|\Bu\|_{L_p (0,T;W^2_{q}(G))}^{\theta}\, ,
\end{equation*}
which gives the decay of 
$\|\nabla \Bu\|_{L_{p} (0,T;L_\beta(\Omega))}$ 
for $G$ fulfilling $(\Omega_1)- (\Omega_3)$
as in \cite[Lemma 4.1]{Dan2006},
\begin{equation}\label{es:Du_decay}
\|\nabla \Bu\|_{L_{p} (0,T; L_\beta(G))}  
\lesssim T^{\frac{1}{2}  (1-\frac{N}{q} + \frac{N}{\beta})} \|\Bu\|_{L_{\infty} \big(0,T; \CD^{2-2\slash p}_{q,p}(G) \big)}^{1-\theta} 
\|\Bu\|_{L_p (0,T; W^2_{q}(G))}^{\theta} \,.
\end{equation} 
\begin{proof}[Proof of Proposition \ref{prop_uDu}]
Recall the definition of $\CD^{2-2\slash p}_{q,p}(G),$ i.e.
\begin{equation*}
\CD^{2-2\slash p}_{q,p}(G)  = \big(J_q (G) , \CD(A_q)\big)_{1-1\slash p,p}, \quad \forall p\in ]1,\infty[.
\end{equation*}
For any $(\theta, r)\in ]0,1[\times [1,\infty[,$ Reiteration Theorem (see \cite{Lun2009} for instance) yields that,
\begin{equation*}
\big(\CD^{2-2\slash p}_{q, p}(G), \CD(A_q)\big)_{\theta,r} 
= \big( J_q(G), \CD(A_q)\big)_{(1-\theta)(1-1\slash p)+\theta,r} 
=:\CD^{2(1-\theta)(1-1\slash p)+2\theta}_{q,r}(G),
\end{equation*}
which implies that
\begin{equation*}
\|\Bu\|_{\CD^{2(1-\theta)(1-1\slash p)+2\theta}_{q,r}(G)} 
\lesssim \|\Bu\|_{\CD^{2-2\slash p}_{q,p}(G)}^{1-\theta} \|\Bu\|_{\CD(A_q)}^{\theta}.
\end{equation*}
Note that $(1-\theta)(1-1\slash p)+\theta = 1-(1-\theta)\slash p \in ]0,1[.$ 
Then by definition of $\beta$ and  above inequality, we have
\begin{equation*}
\|\nabla \Bu\|_{B^{0}_{\beta,r}(G)}
\lesssim \|\nabla \Bu\|_{B^{1-2(1-\theta)\slash p}_{q,r}(G)}
\lesssim \|\Bu\|_{\CD^{2-2\slash p}_{q,p}(G)}^{1-\theta}
          \|\Bu\|_{\CD(A_q)}^{\theta}.
\end{equation*}
If we take $r=1,$ then the embedding $B^{0}_{\beta,1}(G) \hookrightarrow L_{\beta}(G)$ (e.g. see \cite{Tri2006}) gives the desired result.
\end{proof}
\medskip

To study the (fractional) time derivative of quasilinear terms in \eqref{eq:INSL}, we need
to recall \cite[Lemma 2.7, Lemma 3.2]{ShiShi2007} by Y.Shibata and S.Shimizu.
To this end,  for any $(s,\sigma,p,q) \in \BBR_+^2 \times [1,\infty]^2,$ set that
\begin{equation*}
H^{s,\sigma}_{q,p}(G\times \BBR) := L_p\big(\BBR; H^{s}_q(G) \big) \cap
H^{\sigma}_p\big(\BBR;L_q(G) \big). 
\end{equation*}
\begin{lemm}\label{lem:Hhalf}
Let $(p,q) \in ]1,\infty[ \times ]N,\infty[.$ 
Assume $g \in H^{1\slash 2 ,1\slash 2}_{q,p} (G \times \BBR)$ 
and $f \in H^{1 ,1}_{q, \infty} (G \times \BBR).$ 
Then there exists a constant $C$ such that 
\begin{equation*}
\| f g\|_{H^{1\slash 2 ,1\slash 2}_{q,p} (G \times \BBR)} 
\leq C \|f\|_{H^{1 ,1}_{q, \infty}(G \times \BBR)} 
 \| g\|_{H^{1\slash 2 ,1\slash 2}_{q,p}(G\times \BBR)}.
\end{equation*}
If $f$ additionally satisfies the following condition for some 
$0<T\leq 1,$
\begin{equation*}
f = 0 \,\,\hbox{for}\,\, t \notin ]0,2T[ \,\,\,\hbox{and} \,\,\,\pa_t f \in L_p\big( 0,2T; H^1_q(G)\big),
\end{equation*}
then we have for some constant $C_{p,q},$
\begin{multline*}
\| f g\|_{H^{1\slash 2 ,1\slash 2}_{q,p} (G \times \BBR)} \leq C_{p,q} 
\|f\|_{L_\infty(G\times ]0,2T[)}^{1\slash 2} 
\Big( \|\nabla f\|_{L_\infty(0,2T;L_q(G))}
 +\|f\|_{L_\infty (G \times ]0,2T[)}\\
  +T^{(q-N)\slash (pq)} \|\pa_t f\|_{L_\infty(0,2T;L_q(G))}^{1-N\slash (2q)}
  \|\pa_t f\|_{L_p(0,2T;H^1_q(G))}^{N\slash (2q)}
\Big)^{1\slash 2}
 \| g\|_{H^{1\slash 2 ,1\slash 2}_{q,p}(G\times \BBR)}.
\end{multline*}
\end{lemm}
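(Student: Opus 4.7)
\medskip

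\noindent\textbf{Proof plan for Lemma \ref{lem:Hhalf}.}
The plan is to split the mixed norm
\[
\|fg\|_{H^{1/2,1/2}_{q,p}(G\times\mathbb{R})}
=\|fg\|_{L_p(\mathbb{R};H^{1/2}_q(G))}+\|fg\|_{H^{1/2}_p(\mathbb{R};L_q(G))}
\]
and treat the two pieces independently, since they involve different types of interpolation.

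\emph{Spatial part.} For the $L_p(\mathbb{R};H^{1/2}_q(G))$ component I will use the pointwise multiplier estimate
\[
\|f(\cdot,t)g(\cdot,t)\|_{H^{1/2}_q(G)}\lesssim \|f(\cdot,t)\|_{W^{1}_q(G)}\,\|g(\cdot,t)\|_{H^{1/2}_q(G)},
\]
valid because $q>N$ makes $W^1_q(G)$ a Banach algebra, and which follows either from Kato--Ponce type fractional Leibniz rule or by real interpolation between the obvious $L_\infty\times L_q\to L_q$ bound and the $W^1_q\times W^1_q\to W^1_q$ bound. Integrating the $p$th power in time and pulling the essential supremum in $t$ out of $f$ furnishes the spatial contribution immediately, with the constant $\|f\|_{L_\infty(\mathbb{R};W^1_q(G))}\le \|f\|_{H^{1,1}_{q,\infty}(G\times\mathbb{R})}$.

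\emph{Temporal part (first bound).} Here I rely on the Slobodetski\u{\i} characterization
\[
\|h\|_{H^{1/2}_p(\mathbb{R};L_q(G))}^p\simeq \|h\|_{L_p(\mathbb{R};L_q(G))}^p
+\int_{\mathbb{R}}\!\!\int_{\mathbb{R}}\frac{\|h(\cdot,t)-h(\cdot,s)\|_{L_q(G)}^p}{|t-s|^{1+p/2}}\,ds\,dt,
\]
and the Leibniz splitting $(fg)(t)-(fg)(s)=(f(t)-f(s))g(t)+f(s)(g(t)-g(s))$. The second piece yields $\|f\|_{L_\infty}\,[g]_{1/2,p}$ directly by taking $f(s)$ out in $L_\infty(G)$. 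For the first piece I take $f(t)-f(s)$ out in $L_\infty(G)$ (using $q>N$ so $W^1_q\hookrightarrow L_\infty$) and observe that $f$ also belongs to $H^{1/2}_\infty$ in time with values in $L_\infty(G)$, because $\partial_t f\in L_\infty(\mathbb{R};L_q)$ together with $f\in L_\infty(\mathbb{R};W^1_q)$ gives (via a standard complex interpolation between $W^{1,\infty}_\infty L_q$ and $L^\infty_\infty L_\infty$) a controlled $H^{1/2}$-type seminorm of $f$ in $L_\infty(G)$. Combining with an $L_p$--$L_q$ H\"older split in the double integral produces the claimed bound.

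\emph{Temporal part (refined bound).} The main work is the second assertion, because the polynomial factor $T^{(q-N)/(pq)}\|\partial_t f\|_{L_\infty(0,2T;L_q)}^{1-N/(2q)}\|\partial_t f\|_{L_p(0,2T;H^1_q)}^{N/(2q)}$ encodes a precise Gagliardo--Nirenberg interpolation in space and a H\"older embedding in time. The strategy is: $(i)$ bound $\|f(t)-f(s)\|_{L_\infty(G)}$ by $\|f\|_{L_\infty}^{1/2}$ times a half--power that carries the actual modulus of continuity, derived from
\[
\|f(t)-f(s)\|_{L_\infty(G)}\lesssim\|f(t)-f(s)\|_{L_q(G)}^{1-N/q}\|\nabla(f(t)-f(s))\|_{L_q(G)}^{N/q}
\]
(Morrey/Gagliardo--Nirenberg for $q>N$); $(ii)$ write $f(t)-f(s)=\int_s^t\partial_\tau f\,d\tau$ and bound the two $L_q$-norms appearing above by H\"older in $\tau$, which is where the weights $1-N/(2q)$ and $N/(2q)$ and the factor $T^{(q-N)/(pq)}$ arise from splitting $\partial_tf$ between $L_\infty_tL_q_x$ and $L_p_tH^1_{q,x}$; $(iii)$ insert this into the double integral defining $[fg]_{1/2,p}$, apply Minkowski/H\"older, and absorb the $|t-s|$-powers compatibly with $1+p/2$. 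The non-trivial step is $(ii)$: calibrating the two interpolations (spatial Gagliardo--Nirenberg and temporal H\"older), in a way that reproduces exactly the stated exponents while respecting that $f$ vanishes outside $]0,2T[$, is the only genuine obstacle. The first (global) estimate, by contrast, reduces to the classical multiplier result and a one-line Minkowski argument.
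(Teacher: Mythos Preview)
Your plan has a genuine gap in the temporal part. You invoke the Slobodetski\u{\i} double-integral characterization
\[
\|h\|_{H^{1/2}_p(\BBR;L_q(G))}^p\simeq \|h\|_{L_p(\BBR;L_q(G))}^p
+\int_{\BBR}\!\int_{\BBR}\frac{\|h(t)-h(s)\|_{L_q}^p}{|t-s|^{1+p/2}}\,ds\,dt,
\]
but in this paper $H^{1/2}_p$ is the \emph{Bessel potential} space defined through $\langle D_t\rangle^{1/2}$ (see Section~\ref{sec:fs}), not the Sobolev--Slobodetski\u{\i} space $W^{1/2}_p=B^{1/2}_{p,p}$. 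These coincide only when $p=2$; for $p\neq 2$ the Gagliardo seminorm computes the wrong norm, so your Leibniz splitting $(fg)(t)-(fg)(s)$ and all of steps~(i)--(iii) in the refined estimate never touch the quantity you are asked to bound. The same objection applies to your use of ``real interpolation'' in the spatial part: it would land in $B^{1/2}_{q,q}$ rather than $H^{1/2}_q$, though there your Kato--Ponce alternative is a valid escape.

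The paper does not prove this lemma itself; it cites \cite[Lemma~2.7, Lemma~3.2]{ShiShi2007}. The mechanism, however, is clearly visible in the paper's own proof of the companion Lemma~\ref{lem:Hhalf2}: one argues by \emph{bilinear complex interpolation}. Establish the two endpoint bounds
\[
\|fg\|_{L_p(\BBR;L_q)}\le \|f\|_{L_\infty(G\times]0,2T[)}\,\|g\|_{L_p(\BBR;L_q)},
\qquad
\|fg\|_{H^{1,1}_{q,p}}\le C(f)\,\|g\|_{H^{1,1}_{q,p}},
\]
and then use
$H^{1/2,1/2}_{q,p}(G\times\BBR)=\bigl[L_p(\BBR;L_q(G)),\,H^{1,1}_{q,p}(G\times\BBR)\bigr]_{[1/2]}$
to obtain $\|fg\|_{H^{1/2,1/2}_{q,p}}\le \|f\|_{L_\infty}^{1/2}\,C(f)^{1/2}\,\|g\|_{H^{1/2,1/2}_{q,p}}$, which is exactly the square-root structure in the statement. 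The Gagliardo--Nirenberg factor in the refined bound enters through the term $\|(\pa_t f)\,g\|_{L_p(0,2T;L_q)}$ that appears in the $H^{1,1}_{q,p}$ endpoint: one applies the spatial interpolation inequality to $\pa_t f$ (not to $f(t)-f(s)$) to place it in a higher Lebesgue space, pairs it against $g\in H^{1,1}_{q,p}\hookrightarrow L_p(W^1_q)$, and uses H\"older over $]0,2T[$ to extract the power of $T$. Your spatial half is essentially compatible with this scheme; it is the temporal half that must be redone via complex interpolation rather than the Gagliardo seminorm.
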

In fact, Lemma \ref{lem:Hhalf} works well for the case $p>2$ in the application to \eqref{eq:INSL}. However, if assuming $1<p\leq 2,$ we need the following result based on Proposition \ref{prop_uDu}.
\begin{lemm}\label{lem:Hhalf2}
Let $(\theta_1,\theta_2,\alpha,\beta, q,p,T) \in ]0,1[^2\times [q,\infty]^2 \times ]N,\infty[ \times [1,2] \times ]0,1]$ satisfy 
\begin{equation}\label{cdt:Hhalf2}
\theta_1 + \theta_2 \in ]0, 1], \,\,\,
\frac{1}{q} = \frac{1}{\alpha} + \frac{1}{\beta}, \,\,\,
 1-\frac{1-\theta_1}{p} = \frac{N}{q}-\frac{N}{\alpha} \,\,\, \hbox{and}\,\,\,  1-\frac{2(1-\theta_2)}{p} = \frac{N}{q}-\frac{N}{\beta}.
\end{equation}
 Assume that $g \in H^{1\slash 2 ,1\slash 2}_{q,p} (G \times \BBR)$ and 
 $f \in L_\infty (\BBR; W^1_q(G))$ fulfilling
\begin{equation*}
f = 0 \,\,\hbox{for}\,\, t \notin ]0,2T[ \,\,\,\hbox{and} \,\,\,\pa_t f \in L_{p \slash \theta_2}\big(0,2T; L_\beta(G)\big).
\end{equation*}
Then there exists a constant $C_{p,q}$ such that 
\begin{multline*}
\| f g\|_{H^{1\slash 2 ,1\slash 2}_{q,p} (G \times \BBR)} \leq C_{p,q} 
\|f\|_{L_\infty(G\times ]0,2T[)}^{1\slash 2} 
\Big( 
\|\nabla f\|_{L_\infty(0,2T;L_q(G))}
 +\|f\|_{L_\infty (G \times ]0,2T[)}\\
+T^{\frac{3}{2}-\frac{1}{p}-\frac{N}{2q} -\frac{N}{2\beta}} 
         \|\pa_t f\|_{L_{p \slash \theta_2}(0,2T;L_\beta(G))}
\Big)^{1\slash 2}
 \| g\|_{H^{1\slash 2 ,1\slash 2}_{q,p}(G\times \BBR)}.
\end{multline*}
\end{lemm}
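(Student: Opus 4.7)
\emph{Proof plan.} I would proceed in parallel with the proof of Lemma~\ref{lem:Hhalf}, separately estimating the two constituent norms
$\|fg\|_{L_p(\BBR;H^{1/2}_q(G))}$ and $\|fg\|_{H^{1/2}_p(\BBR;L_q(G))}$ that together give the $H^{1/2,1/2}_{q,p}$ norm. The new input, compared with Lemma~\ref{lem:Hhalf}, is that $\partial_t f$ is now controlled only in a single mixed Lebesgue space $L_{p/\theta_2}(0,2T;L_\beta(G))$ with $\beta\ge q$, which forces a simultaneous use of spatial Sobolev embedding on $g$ and H\"older in time on $f(t)-f(s)$, governed by the exponent relations in \eqref{cdt:Hhalf2}.

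\textbf{Spatial part.} Since $q>N$, the space $H^{1/2}_q(G)$ is a Sobolev module over $W^1_q(G)$, yielding the pointwise-in-time inequality
$\|fg(\cdot,t)\|_{H^{1/2}_q}\lesssim \|f(\cdot,t)\|_{W^1_q}\,\|g(\cdot,t)\|_{H^{1/2}_q}$.
Integrating in $t$ against $L_p$ and majorising
$\|f\|_{L_\infty(\BBR;W^1_q)} \lesssim \|f\|_{L_\infty(G\times\BBR)}^{1/2}\bigl(\|f\|_{L_\infty(G\times\BBR)}+\|\nabla f\|_{L_\infty(0,2T;L_q(G))}\bigr)^{1/2}$
gives exactly the square-root shape on the right-hand side of the asserted estimate.

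\textbf{Temporal part.} I would invoke the Gagliardo--Slobodecki\u{\i} characterisation
\begin{equation*}
[fg]_{H^{1/2}_p(\BBR;L_q)}^{\,p} \simeq \int_\BBR\int_\BBR \frac{\|(fg)(t)-(fg)(s)\|_{L_q}^{p}}{|t-s|^{1+p/2}}\,dt\,ds,
\end{equation*}
split $(fg)(t)-(fg)(s)=f(t)\bigl(g(t)-g(s)\bigr)+\bigl(f(t)-f(s)\bigr)g(s)$, and treat each piece separately. The first piece is immediately dominated by $\|f\|_{L_\infty}^{p}\,[g]_{H^{1/2}_p(L_q)}^{p}$, which is admissible. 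For the second piece I apply H\"older in $x$ with $\tfrac1q=\tfrac1\alpha+\tfrac1\beta$ to get
\begin{equation*}
\|(f(t)-f(s))\,g(s)\|_{L_q}\le \|f(t)-f(s)\|_{L_\beta}\,\|g(s)\|_{L_\alpha},
\end{equation*}
then use H\"older in $\tau$ to bound $\|f(t)-f(s)\|_{L_\beta}\le |t-s|^{1-\theta_2/p}\|\partial_\tau f\|_{L_{p/\theta_2}(0,2T;L_\beta)}$, and use the mixed-derivative interpolation of $g\in H^{1/2}_p(L_q)\cap L_p(H^{1/2}_q)$ together with the spatial Sobolev embedding to control $\|g(s)\|_{L_\alpha}$ with a loss of $\tfrac{1-\theta_1}{p}$ in time, consistent with the relation $1-(1-\theta_1)/p=N/q-N/\alpha$. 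After substitution the time integral reduces to
\begin{equation*}
\|\partial_\tau f\|_{L_{p/\theta_2}(L_\beta)}^{p}\int_\BBR\int_\BBR \frac{\|g(s)\|_{L_\alpha}^{p}}{|t-s|^{1+\theta_2-p/2}}\,dt\,ds,
\end{equation*}
and the support condition $\supp f\subset\,]0,2T[$ effectively restricts $|t-s|$ to size $O(T)$. Absorbing the two H\"older losses using $\theta_1+\theta_2\le 1$ then collects the announced factor $T^{3/2-1/p-N/(2q)-N/(2\beta)}$, after which the $s$-integral is controlled by $\|g\|_{H^{1/2,1/2}_{q,p}}$.

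\textbf{Main obstacle.} The principal technical difficulty is matching the spatial embedding for $g$ with the temporal H\"older loss on $f(t)-f(s)$ so that the full Gagliardo exponent $1+p/2$ in the denominator is recovered exactly. This is where the arithmetic constraints in \eqref{cdt:Hhalf2} enter: the equalities for $\alpha$ and $\beta$ are the scaling identities coming from the Sobolev embedding and from the fractional time-Hardy inequality, while the inequality $\theta_1+\theta_2\le 1$ ensures that the residual power of $|t-s|$ is non-negative, so that the support bound $\supp f\subset[0,2T]$ converts into the asserted positive power of $T$. The restriction $p\le 2$ enters only through the implicit constraint $\theta_2<p/2$, which must be compatible with $\theta_2\in\,]0,1[$ and is granted by \eqref{cdt:Hhalf2}. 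Assembling the spatial and temporal parts then finishes the proof.
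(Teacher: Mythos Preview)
Your temporal argument rests on the equivalence
\[
[fg]_{H^{1/2}_p(\BBR;L_q)}^{\,p}\simeq\iint_{\BBR^2}\frac{\|(fg)(t)-(fg)(s)\|_{L_q}^{p}}{|t-s|^{1+p/2}}\,dt\,ds,
\]
which does not hold for $p\ne 2$: the Gagliardo double integral characterises the Slobodecki\u\i\ space $W^{1/2}_p=B^{1/2}_{p,p}$, while here $H^{1/2}_p$ is the Bessel potential space defined via $\JDt^{1/2}$. In the range $p\in[1,2]$ of the lemma one has $B^{1/2}_{p,p}\hookrightarrow H^{1/2}_p\hookrightarrow B^{1/2}_{p,2}$, so majorising $\|fg\|_{H^{1/2}_p}$ by the Gagliardo seminorm of $fg$ is legitimate, but your first piece then produces $\|f\|_{L_\infty}[g]_{B^{1/2}_{p,p}}$, and $[g]_{B^{1/2}_{p,p}}$ is \emph{not} controlled by $\|g\|_{H^{1/2}_p}$ for $p<2$. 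A related problem lurks in your second piece: the ``loss of $(1-\theta_1)/p$ in time'' on $g$ implicitly asks for an embedding of $H^{1/2,1/2}_{q,p}$ into $L_{p/\theta_1}(\BBR;L_\alpha)$, which you have not justified from a half-order hypothesis on $g$.

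The paper avoids both difficulties by never working at order $1/2$ directly. It fixes $f$, regards $g\mapsto fg$ as a linear operator, and proves the two \emph{integer}-order endpoint bounds $\|fg\|_{L_p(L_q)}\lesssim\|f\|_{L_\infty}\|g\|_{L_p(L_q)}$ and $\|fg\|_{H^{1,1}_{q,p}}\lesssim(\cdots)\|g\|_{H^{1,1}_{q,p}}$. At the $H^{1,1}_{q,p}$ endpoint the only nontrivial term is $\|(\pa_tf)g\|_{L_p(L_q)}$; there one uses the trace embedding $H^{1,1}_{q,p}\hookrightarrow\BUC(\BBR;B^{1-1/p}_{q,p})$ together with the Reiteration Theorem to obtain $\|g\|_{L_{p/\theta_1}(L_\alpha)}\lesssim\|g\|_{H^{1,1}_{q,p}}$ under the first relation in \eqref{cdt:Hhalf2}, and then H\"older in $t$ against $\pa_tf\in L_{p/\theta_2}(L_\beta)$ yields the factor $T^{(1-\theta_1-\theta_2)/p}=T^{3/2-1/p-N/(2q)-N/(2\beta)}$. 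The stated half-order estimate then follows from the complex interpolation identity $H^{1/2,1/2}_{q,p}=[L_p(L_q),H^{1,1}_{q,p}]_{1/2}$, which delivers the square-root structure on the constants automatically.
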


\begin{proof} 
Suppose $g$ belong to $H^{1,1}_{q,p}(G\times\BBR).$ 
It is easily to verify that
\begin{equation}\label{es:fg0}
\|fg\|_{L_p(\BBR;L_q(G))} \lesssim \|f\|_{L_\infty (G \times ]0,2T[)}  
\|g\|_{L_p(\BBR;L_q(G))},
\end{equation}
\begin{equation}\label{es:fg1}
\| f g\|_{H^{1 ,1}_{q,p} (G \times \BBR)} \lesssim
\Big( \|\nabla f\|_{L_\infty(0,2T;L_q(G))}
 +\|f\|_{L_\infty (G \times ]0,2T[)} 
 \Big) \| g\|_{H^{1 ,1}_{q,p}(G\times \BBR)}
+ \|(\pa_t f)\, g \|_{L_p(0,2T;L_q(G))}.
\end{equation}
In the following, we will mainly study the last term on the r.h.s. of \eqref{es:fg1}. 

Now, recall the fact that
$ H^{1,1}_{q,p}(G\times\BBR) \hookrightarrow 
\BUC \big(\BBR_+; B^{1-1\slash p}_{q,p}(G) \big).$ 
Then we obtain from Reiteration Theorem (see \cite{Lun2009}) that  for any $(\theta_1, r)\in ]0,1[\times [1,\infty[,$ 
\begin{equation*}
\big(B^{1-1\slash p}_{q, p}(G), W^1_q(G)\big)_{\theta_1,r} 
= \big( L_q(G), W^1_q(G) \big)_{(1-\theta_1)(1-1\slash p)+\theta_1, r} 
=B^{1-(1-\theta_1)\slash p}_{q,r}(G).
\end{equation*}
Taking  $\alpha \geq q$ such that $1-(1-\theta_1)\slash p = N \slash q-N\slash \alpha$, we have
\begin{equation*}
\|g\|_{L_\alpha(G)} \lesssim \|g\|_{B^{1-(1-\theta_1)\slash p}_{q,1}(G)} 
\lesssim \|g\|_{B^{1-1\slash p}_{q, p}(G)}^{1-\theta_1}
           \|g\|_{W^1_q(G)}^{\theta_1},
\end{equation*}
which  yields that
\begin{equation*}
\|g\|_{L_{p \slash \theta_1}(\BBR_+; L_\alpha(G))} 
\lesssim \|g\|_{H^{1,1}_{q,p}(G\times\BBR)}.
\end{equation*}
Therefore, we infer from above bound and Conditions \eqref{cdt:Hhalf2}
\begin{align}\label{es:fg2}
  \|(\pa_t f)\, g \|_{L_p(0,2T;L_q(G))} 
    &\lesssim T^{(1-\theta_1-\theta_2)\slash p} 
         \|\pa_t f\|_{L_{p \slash \theta_2}(0,2T;L_\beta(G))}
         \|g\|_{L_{p \slash \theta_1}(0,2T;L_\alpha(G))}\\
 & \lesssim    T^{\frac{3}{2}-\frac{1}{p}-\frac{N}{2q} -\frac{N}{2\beta}} 
         \|\pa_t f\|_{L_{p \slash \theta_2}(0,2T;L_\beta(G))}
         \|g\|_{H^{1,1}_{q,p}(G\times\BBR)}. \nonumber
\end{align}
Finally, combining the bounds \eqref{es:fg0}, \eqref{es:fg1}, \eqref{es:fg2} and the fact
 $$H^{1\slash 2 ,1\slash 2}_{q,p}(G\times \BBR) =
 \Big( L_p\big(\BBR;L_q(G)\big),
 H^{1, 1}_{q,p}(G\times \BBR) \Big)_{[1\slash 2]},$$
yield the desired result.
\end{proof}

Let us end this part with some comments on Lemma \ref{lem:Hhalf2}.
 In fact, we have 
$$N\slash q +1\slash p = 3\slash 2 +N\slash (2\alpha) >3\slash 2$$
 by fixing $\theta_1+\theta_2=1$ in \eqref{cdt:Hhalf2}, which gives our definition of $(II)$ in our main theorem.
On the other hand, if ignoring the purpose of applying Lemma \ref{lem:Hhalf2} to \eqref{eq:INSL}, then the proof of Lemma \ref{lem:Hhalf2} yields the following slightly general result.
 \begin{lemm}\label{lem:Hhalf3}
Let $(\theta,\alpha,\beta, q,p) \in ]0,1[\times [q,\infty]^2 \times ]N,\infty[ \times [1,2]$ satisfy 
\begin{equation*}
\frac{1}{q} = \frac{1}{\alpha} + \frac{1}{\beta}, \,\,\,
 1-\frac{\theta}{p} = \frac{N}{q}-\frac{N}{\alpha} \,\,\, \hbox{and}\,\,\,  1-\frac{2(1-\theta)}{p} = \frac{N}{q}-\frac{N}{\beta}.
\end{equation*}
 Assume that $g \in H^{1\slash 2 ,1\slash 2}_{q,p} (G \times \BBR)$ and 
 $f \in L_\infty (\BBR; W^1_q(G))$ fulfilling
$\pa_t f \in L_{p \slash \theta}\big(\BBR; L_\beta(G)\big).$
Then there exists a constant $C_{p,q}$ such that 
\begin{equation*}
\| f g\|_{H^{1\slash 2 ,1\slash 2}_{q,p} (G \times \BBR)} \leq C_{p,q} 
\|f\|^{1\slash 2}_{L_\infty(G\times \BBR)} 
\Big(  \|f\|_{L_\infty(\BBR;W_q^1(G))}
+ \|\pa_t f\|_{L_{p \slash \theta}(\BBR;L_\beta(G))}
\Big)^{1\slash 2}
 \| g\|_{H^{1\slash 2 ,1\slash 2}_{q,p}(G\times \BBR)}.
\end{equation*}
\end{lemm}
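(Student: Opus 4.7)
The plan is to mirror the author's proof of Lemma~\ref{lem:Hhalf2}, simply dropping everywhere the temporal cutoff factor $T^{\cdots}$; since the assumption $\pa_t f \in L_{p/\theta}(\BBR;L_\beta(G))$ is now required globally in time rather than only after multiplication by an indicator, no gain in a small time parameter is needed anymore. The backbone is the complex interpolation identity
\begin{equation*}
 H^{1/2,1/2}_{q,p}(G\times\BBR) = \bigl[L_p(\BBR;L_q(G)),\,H^{1,1}_{q,p}(G\times\BBR)\bigr]_{[1/2]},
\end{equation*}
so it suffices to prove the two endpoint bounds
\begin{align*}
 \|fg\|_{L_p(\BBR;L_q(G))} &\lesssim \|f\|_{L_\infty(G\times\BBR)}\,\|g\|_{L_p(\BBR;L_q(G))},\\
 \|fg\|_{H^{1,1}_{q,p}(G\times\BBR)} &\lesssim M(f)\,\|g\|_{H^{1,1}_{q,p}(G\times\BBR)},
\end{align*}
with $M(f):=\|f\|_{L_\infty(\BBR;W^1_q(G))}+\|\pa_t f\|_{L_{p/\theta}(\BBR;L_\beta(G))}$. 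Interpolating at index $1/2$ then produces the stated inequality with the square-root splitting $\|f\|_{L_\infty}^{1/2}\,M(f)^{1/2}$.

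The first endpoint bound is immediate from H\"older. For the second, I would apply the Leibniz rule to $\nabla(fg)$ and $\pa_t(fg)$ and handle the four resulting terms separately. Since $q>N$, $W^1_q(G)\hookrightarrow\BUC(G)$ is a Banach algebra, so $\|\nabla(fg)\|_{L_q(G)}\lesssim\|f\|_{W^1_q(G)}\|g\|_{W^1_q(G)}$, which integrates in time to $\|f\|_{L_\infty(\BBR;W^1_q(G))}\|g\|_{H^{1,1}_{q,p}}$. The term $f\pa_t g$ is trivial. The critical term is $(\pa_t f)\,g$, treated by H\"older in space with $1/q=1/\alpha+1/\beta$ and then H\"older in time with conjugate exponents $p/\theta$ and $p/(1-\theta)$, giving
\begin{equation*}
 \|(\pa_t f)\,g\|_{L_p(\BBR;L_q(G))}\leq\|\pa_t f\|_{L_{p/\theta}(\BBR;L_\beta(G))}\,\|g\|_{L_{p/(1-\theta)}(\BBR;L_\alpha(G))}.
\end{equation*}

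The main (and only) technical point is controlling the mixed-norm quantity $\|g\|_{L_{p/(1-\theta)}(\BBR;L_\alpha(G))}$ by $\|g\|_{H^{1,1}_{q,p}}$ without invoking any temporal localisation. For this I would use the embedding $H^{1,1}_{q,p}(G\times\BBR)\hookrightarrow\BUC(\BBR;B^{1-1/p}_{q,p}(G))$ together with the Reiteration theorem
\begin{equation*}
 \bigl(B^{1-1/p}_{q,p}(G),\,W^1_q(G)\bigr)_{1-\theta,\,1}=B^{1-\theta/p}_{q,1}(G)\hookrightarrow L_\alpha(G),
\end{equation*}
where the last embedding is exactly the condition $1-\theta/p=N/q-N/\alpha$ built into the hypotheses. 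Pointwise in time this yields $\|g(t)\|_{L_\alpha(G)}\lesssim\|g(t)\|_{B^{1-1/p}_{q,p}(G)}^{\theta}\|g(t)\|_{W^1_q(G)}^{1-\theta}$, and then H\"older in time gives
\begin{equation*}
 \|g\|_{L_{p/(1-\theta)}(\BBR;L_\alpha(G))}\lesssim\|g\|_{L_\infty(\BBR;B^{1-1/p}_{q,p}(G))}^{\theta}\,\|g\|_{L_p(\BBR;W^1_q(G))}^{1-\theta}\lesssim\|g\|_{H^{1,1}_{q,p}(G\times\BBR)}.
\end{equation*}
Collecting the estimates produces the $H^{1,1}_{q,p}$ bound with constant $M(f)$, and the interpolation argument above concludes. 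The delicate matter is really just the bookkeeping of interpolation indices, ensuring that the two remaining conditions $1/q=1/\alpha+1/\beta$ and $1-2(1-\theta)/p=N/q-N/\beta$ are consistent with the Sobolev embedding numerology underlying the Reiteration step; these are verified directly from the hypotheses with no further adjustment.
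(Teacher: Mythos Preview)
Your proposal is correct and is precisely the argument the paper has in mind: the authors state that Lemma~\ref{lem:Hhalf3} follows from the proof of Lemma~\ref{lem:Hhalf2} once the temporal cutoff is dropped, which amounts to specialising the parameters of Lemma~\ref{lem:Hhalf2} to $\theta_1=1-\theta$, $\theta_2=\theta$ so that $\theta_1+\theta_2=1$ and the factor $T^{(1-\theta_1-\theta_2)/p}$ disappears. Your bookkeeping of the Reiteration step, the Sobolev embedding $B^{1-\theta/p}_{q,1}(G)\hookrightarrow L_\alpha(G)$, and the two H\"older applications (space with $1/q=1/\alpha+1/\beta$, time with $\theta/p+(1-\theta)/p=1/p$) is exactly right.
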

\end{appendix}


\section*{Acknowledgement}
The research of H.S. was supported by JSPS Grant-in-aid for Young Scientists (B) 17K14224.
Y.S.  acknowledges the support from JSPS Grant-in-aid for Scientific Research (A) 17H0109 and Top Global University Project.
 X.Z. would like to thank Professor Rapha\"el Danchin for his discussions on this topic and also for his careful reading on the early version of this manuscript.


\end{document}